\newtheorem{Th}{Theorem}[section]
\newtheorem{Prop}[Th]{Proposition}
\newtheorem{Lem}[Th]{Lemma}
\newtheorem{Cor}[Th]{Corollary}
\newenvironment{altproof}[1]
{\noindent%\addvspace{0.3cm}
{\em Proof of {#1}}.}
{\nopagebreak\mbox{}\hfill $\Box$\par\addvspace{0.5cm}}
   \newcommand{\vp}{\varphi}
   \newcommand{\eps}{\varepsilon}
   \def\div{\mathop{\mathrm{div}\,}}
   \def\supp{\mathrm{supp}}
   \def\id{\mathrm{id}}
   \def\Z{\mathbb{Z}}
   \def\N{\mathbb{N}}
   \def\R{\mathbb{R}}
   \def\curl{\mathrm{curl}}
   \def\cl{\mathrm{cl\,}}
   \def\U{\mathcal{U}} %U neighborhood of graph
   \def\V{\mathcal{V}}
   \def\E{\mathcal{E}}
   \def\J{\mathcal{J}}
   \def\W{\mathcal{W}}
   \def\D{\mathcal{D}}
   \def\A{\mathcal{A}}
     \def\M{\mathcal{M}}
\newcommand{\cB}{{\mathcal B}}
\newcommand{\cC}{{\mathcal C}}
\newcommand{\cD}{{\mathcal D}}
\newcommand{\cE}{{\mathcal E}}
\newcommand{\cH}{{\mathcal H}}
\newcommand{\cI}{{\mathcal I}}
\newcommand{\cL}{{\mathcal L}}
\newcommand{\cM}{{\mathcal M}}
\newcommand{\cN}{{\mathcal N}}
\newcommand{\cP}{{\mathcal P}}
\newcommand{\al}{\alpha}
\newcommand{\Ga}{\Gamma}
\newcommand{\Om}{\Omega}
\def\curlop{\nabla\times}
\newcommand{\weakto}{\rightharpoonup}
\newcommand{\pa}{\partial}
\def\id{\mathrm{id}}
\newcommand{\tX}{\widetilde{X}}
\numberwithin{equation}{section}
\begin{document}

\title{Ground states of time-harmonic 
semilinear Maxwell equations in $\R^3$ with vanishing permittivity}

\author{Jaros\l aw Mederski\footnote{The study was
supported by research grant NCN 2013/09/B/ST1/01963}}
%\footnote{The study was supported by research fellowship within project "Enhancing Educational Potential of Nicolaus Copernicus University in the Disciplines of Mathematical and Natural Sciences" (project no. POKL.04.01.01-00-081/10)}}
\date{}

\maketitle

\begin{abstract}
We investigate the existence of solutions $E:\mathbb{R}^3\to\mathbb{R}^3$ of the time-harmonic semilinear Maxwell equation 
$$
\curlop(\curlop E) + V(x) E = \pa_E F(x,E) \quad \text{in }\mathbb{R}^3
$$
where $V:\mathbb{R}^3\to\mathbb{R}$, $V(x)\leq 0$ a.e. on $\mathbb{R}^3$,
$\curlop$ denotes the curl operator in $\R^3$ and $F:\mathbb{R}^3\times\mathbb{R}^3\to\mathbb{R}$ is a nonlinear function in $E$. In particular we find a ground state solution provided that suitable growth conditions on $F$ are imposed and  $L^{3/2}$-norm of $V$ is less than the best Sobolev constant. In applications $F$ is responsible for the nonlinear polarization and $V(x)=-\mu\omega^2\varepsilon(x)$ where $\mu>0$ is the magnetic permeability, $\omega$ is the frequency of the time-harmonic electric field $\Re\{E(x)e^{i\omega t}\}$ and $\varepsilon$ is the linear part of the permittivity in an inhomogeneous medium. 
\end{abstract}

{\bf MSC 2010:} Primary: 35Q60; Secondary: 35J20, 78A25

{\bf Key words:} time-harmonic Maxwell equations, ground state, variational methods, strongly indefinite functional, Nehari-Pankov manifold, global compactness, epsilon-near-zero media.

\section*{Introduction}
\setcounter{section}{1}

We study the propagation of electromagnetic waves $(\cE,\cB)$ in the absence of charges, currents and magnetization. The constitutive relations between the electric displacement field $\cD$ and the electric field $\cE$ as well as between the magnetic induction $\cH$ and the magnetic field $\cB$ are given by 
\begin{equation}\label{eq:relations}
\cD=\eps\cE +\cP_{NL} \quad\text{and}\quad \cH=\frac1\mu \cB,
\end{equation}
where $\eps$ is the (linear) permittivity of an inhomogeneous material, and $\cP_{NL}$
%=(\cP_{NL1},\cP_{NL2},\cP_{NL3})$$
stands for the nonlinear polarization which depends nonlinearly on the electric field $\cE$. In inhomogeneous media $\eps$ and $\cP_{NL}$ depend on the position $x\in\R^3$ and  we assume that the magnetic permeability is constant $\mu>0$.
As usual, the Maxwell equations
\begin{equation}\label{eq:Maxwell}
\left\{
\begin{aligned}
    &\curlop \cH = \pa_t \cD, \quad \div(\cD)=0,\\
    &\pa_t \cB + \curlop \cE=0, \quad\div(\cB)=0,
\end{aligned}
\right.
\end{equation}
together with the constitutive relations \eqref{eq:relations} lead to the equation (see Saleh and Teich \cite{FundPhotonics})
$$
\curlop\left(\frac1\mu\curlop \cE\right)+\partial_t^2 (\eps\cE)
 = -\partial_t^2 \cP_{NL}.
$$
In the time-harmonic case the fields $\cE$ and $\cP_{NL}$ are of the form
$\cE(x,t) = \Re\{E(x)e^{i\omega t}\}$, $\cP_{NL}(x,t) = \Re\{P(x)e^{i\omega t}\}$, where $E(x),P(x)\in\R^3$ and we arrive at the time-harmonic Maxwell equation
\begin{equation}\label{eq}
\nabla\times(\nabla\times E) + V(x) E = f(x,E) \qquad\textnormal{ in } \R^3,
\end{equation}
where $V(x)=-\mu\omega^2\eps(x)\leq 0$ and $f(x,E)=\mu\omega^2 P(x,E)$. Here $E:\R^3\to\R^3$ is a vector field and $V:\R^3\to\R$. In a Kerr-like medium the strong electric field $\cE$ of high intensity causes the refractive index to vary quadratically with the field and then the polarization has the form $\cP_{NL} = \al(x)\langle |\cE|^2\rangle\cE$, where  $\langle |\cE|^2\rangle$ stands for the time average of the intensity of $\cE$, hence $P(x,E)=\frac{1}{2}\al(x)|E|^2E$ (see Nie \cite{Nie} and Stuart \cite{Stuart91}).  In applications, for low intensity $|\cE|$ the Kerr effect is often considered to be linear, $\cP_{NL}$ is negligible and therefore we may assume that $\cP_{NL}$ decays rapidly as $|\cE|\to 0$. In order to model these nonlinear phenomena we consider nonlinearities of the form
\begin{equation}\label{ExKerr}
f(x,E)=\Gamma(x)\min\{|E|^{p-2},|E|^{q-2}\}E,\quad 2<p\leq q, 
\end{equation}
where $\Gamma\in L^{\infty}(\R^3)$ is positive, periodic and bounded away from $0$. Case $p=4$ corresponds to the Kerr effect for the strong field $\cE$. In fact, we will able to deal with general nonlinearities of the form $f(x,E)=\partial_E F(x,E)$, where $F:\R^3\times\R^3\to\R$. Some other examples of nonlinearities in physical models can be found e.g. in Stuart \cite{Stuart91} (see also Section \ref{sec:results}).

%There is an extensive literature devoted to study of the propagation of electromagnetic waves in nonlinear media and the usual approach is based on an approximation of the Maxwell equations by the nonlinear Schr\"odinger equation using the so-called slowly varying envelope approximation. However, this approach may produce non-physical solutions \cite{Akhmediev,Hile}. To the best of our knowledge the exact solutions to Maxwell equations have been found only in particular situations by means of reduction of \eqref{eq:Maxwell} and \eqref{eq:relations} to an ordinary differential equation. For instance in the series of papers \cite{Stuart91,Stuart93,StuartZhou96,StuartZhou05,StuartZhou10,StuartZhou01,Stuart04} Stuart and Zhou have found transverse electric and transverse magnetic solutions to \eqref{eq:Maxwell} for asymptotically linear polarizations. The reduction argument require a special structure of the equations e.g. the cylindrical symmetry. Our principal aim is to study the existence of solutions of \eqref{eq} under 

We look for weak solutions to \eqref{eq} in a certain $\D(\curl,p,q)$ space, where $p$ and $q$ are provided by the growth of $f$; see Section \ref{sec:varsetting} for details. Note that a solution $E$ of \eqref{eq}
determines $\cP_{NL}$ and $\cD$ by the first constitutive relation in \eqref{eq:relations} whereas $\cB$ and $\cH$ are obtained from $\nabla\times \cE$ by time-integration. We will show that if $E\in \D(\curl,p,q)$ solves \eqref{eq}, then the total electromagnetic energy
\begin{eqnarray}\label{eq:EM_Energy}
\cL(t):=\frac12\int_{\R^3}\cE\cD+\cB\cH\,dx
%\\&=&\frac12\int_{\R^3}\frac{1}{\mu\omega^2}
%(-V(x)|E|^2+f(x,E)E)\cos^2(\omega t)+\frac{1}{\mu}|\nabla\times E|^2\sin^2(\omega t)\,dx\nonumber
\end{eqnarray}
is finite. We do not know whether the fields $\cE$, $\D$, $\cB$ and $\cH$ are localized, i.e. decay to zero as $|x|\to\infty$, however $\D(\curl,p,q)$ lies in the sum of Lebesgue spaces $L^{p,q}:=L^p(\R^3,\R^3)+L^q(\R^3,\R^3)$ and therefore it does not contain the usual nontrivial travelling waves $E$ propagating in a given direction $z\in\R^3$ such that $E(x)=E(x+z)$ for all $x\in\R^3$. The finiteness of the electromagnetic energy and the localization problem attract a strong attention in the study of self-guided beams of light in a nonlinear medium; see e.g. \cite{Stuart91,Stuart93}. 

We restrict our considerations to optical metama\-terials having permittivity $\eps$ close to zero, i.e. the so-called epsilon-near-zero (ENZ) media (see e.g. \cite{BoostingPhysRevB,Linearpermittivity,NonlinearPlasmonics} and references therein). The ENZ materials exhibit strong nonlinear effects, e.g. the Kerr effect, governed by the  polarization $\cP_{NL}$ and the propagation of time-harmonic electric field waves is described by \eqref{eq}. %The ENZ materials have been extensively stu\-died numerically and experimentally, however we are not aware of any rigorous mathematical analysis of the problem.
Our principal aim is to investigate the existence and the nonexistence of solutions to \eqref{eq} under appropriate assumptions imposed on $V$ and $F$. In particular, the closeness to zero of $\eps$ will be expressed in terms of $L^{\frac32}$-norm of $V$ (see Section 2). Moreover ground state solutions which have the least possible energy among all nontrivial solutions will be of our major interest owing to their physical importance. It is worth mentioning that usually naturally occurring materials have the permittivity positive and bounded away from zero, i.e. $V(x)=-\mu\omega^2\eps(x)$ is negative and bounded away from $0$. However it is not clear in which space one should seek weak solutions of this problem with such $V$ and a nonlinearity of the form \eqref{ExKerr}, and whether any variational method can be used. We will show, in fact, that \eqref{eq} does not admit classical solutions in case of constant and negative $V$; see Corollary \ref{CorollaryMain}.

Recall that semilinear equations involving the the curl-curl operator $\curlop \curlop(\cdot)$ in $\R^3$ have been recently studied by Benci and Fortunato in \cite{BenFor}. They introduce a model for a unified field theory for classical electrodynamics which is based on a semilinear perturbation of the Maxwell equations. In the magnetostatic case, in which the electric field vanishes and the magnetic field is independent of time, they are lead to an equation of the form
\begin{equation}\label{eq:benci-fortunato}
\curlop(\curlop A) = W'(|A|^2)A\qquad\text{in } \R^3
\end{equation}
for the gauge potential $A$ related to the magnetic field $H=\curlop A$. Here
$F(A)=\frac12 W(|A|^2)$ is nonlinear in $A$.
We emphasize that proof of the existence of solutions to (\ref{eq:benci-fortunato}) in \cite{BenFor} contains a gap 
and the techniques from \cite{BenFor} do not seem to be sufficient. Indeed, in order to deal with the lack of compactness issue they restrict the space of divergence-free vector fields to the radially symmetric ones, which becomes the null space.  Finally in \cite{BenForAzzAprile} Azzollini et al.\ use the cylindrical symmetry of the equation to find solutions of \eqref{eq:benci-fortunato} of the form
\begin{equation*}\label{eq:sym1}
A(x)=\al(r,x_3)\begin{pmatrix}-x_2\\x_1\\0\end{pmatrix},\qquad r=\sqrt{x_1^2+x_2^2}.
\end{equation*}
A field of this form is divergence-free and 
$$\curlop \curlop A=-\Delta A,$$
hence standard methods of nonlinear analysis apply. In \cite{DAprileSiciliano} D'Aprile and Siciliano find another kind of cylindrical solutions of the equation again using symmetry arguments and the scaling properties of \eqref{eq:benci-fortunato}.
Observe that \eqref{eq} cannot be treated neither by the Palais principle of symmetric criticality \cite{Palais} nor by the rescaling arguments due to the presence of nonsymmetric and vanishing $V$, i.e. $V\in L^{\frac32}(\R^3)$. We would like to emphasize that we are also able to deal with functions $F(x,E)$ that depend on $x$ and are not radial in $E$. Therefore, we point out that the existence of ground states solutions of \eqref{eq} with $V=0$ will shed a new light on equation (\ref{eq:benci-fortunato}) and on a new formulation of the Maxwell equations due to Born and Infeld \cite{BornInfeld,BenFor}.

We also mention the papers \cite{Stuart91,Stuart93,StuartZhou96,StuartZhou05,StuartZhou10,StuartZhou01,Stuart04} by Stuart and Zhou, who studied transverse electric and transverse magnetic solutions to \eqref{eq:Maxwell} for asymptotically linear polarizations and if again the cylindrical symmetry is present.

Problem \eqref{eq} has a variational structure and (weak) solutions correspond to critical points of the energy functional
\begin{equation}\label{eq:action}
\E(E) = \frac12\int_{\R^3}|\curlop E|^2\,dx + \frac12\int_{\R^3} V(x)|E|^2\,dx - \int_{\R^3} F(x,E)\,dx
\end{equation}
defined on a space $\D(\curl,p,q)$ which will be introduced in Section~3. One difficulty from a mathematical point of view is that the curl-curl operator $\curlop \curlop(\cdot)$ has an infinite-dimensional kernel, namely all gradient vector fields. Moreover the functional $\E$ is unbounded from above and from below and its critical points have infinite Morse index. In addition to these problems related to the strongly indefinite geometry of $\E$, we also have to deal with the lack of compactness issues. Namely functional $\E'$ is not (sequentially) weak-to-weak$^*$ continuous, i.e. the weak convergence $E_n\weakto E$ in $\D(\curl,p,q)$ does not imply that $\E'(E_n)\weakto \E'(E)$ in $\D(\curl,p,q)^*$ (see the discussion preceding Corollary \ref{CorJweaklycont}). Therefore we do not know whether a weak limit of a bounded Palais-Smale sequence is a critical point. Moreover the lack of the sufficient regularity of $\E$ makes this problem difficult to treat with the available variational 
methods for indefinite problems e.g. \cite{BenciRabinowitz,BartschDing,Rabinowitz:1986}.

In order to find solutions to \eqref{eq} we use a generalization of the Nehari manifold technique for strongly indefinite functionals obtained recently by Bartsch and the author in \cite{BartschMederski} (see also Szulkin and Weth \cite{SzulkinWeth,SzulkinWethHandbook}). Namely we introduce a Nehari-Pankov manifold (cf. \cite{Pankov}) which is homeomorphic with a sphere in the subspace of $\D(\curl,p,q)$ consisting of divergence-free vector fields. This allows to find a minimizing sequence on the sphere and hence on the Nehari-Pankov manifold. However in \cite{BartschMederski} we are in a position to find a limit point of the sequence being a critical point because the space of divergence-free vector fields on a bounded domain is compactly embedded into certain $L^p$ spaces and a variant of the Palais-Smale condition is satisfied. Since \eqref{eq} is modelled in $\R^3$, the minimizing sequences are no longer compact. Therefore critical point theory developed in \cite{BartschMederski}[Section 4] is 
insufficient to find a solution to \eqref{eq}. Moreover the lack of the weak-to-weak$^*$ continuity of $\cE'$ makes this problem impossible to treat by a concentration-compactness argument in the spirit of Lions \cite{Lions84,Lions85} in $\D(\curl,p,q)$.
Our approach is based on a new careful analysis of a bounded sequence $(E_n)$ of the Nehari-Pankov manifold (Theorem \ref{ThMainSplitting}) with a possibly infinite splitting \eqref{eqInfinitesplitting} of the limit
$$\lim_{n\to\infty}\Big(\frac{1}{2}\int_{\R^3}|\curlop E_n|^2\,dx-\E(E_n)\Big).$$
This result enables us to get the the weak-to-weak$^*$ continuity of $\cE'$ on the Nehari-Pankov manifold. Moreover, in the spirit of the global compactness result of Struwe \cite{Struwe,StruweSplitting} or Coti Zelati and Rabinowitz \cite{CotiZelatiRab}, we are able to find a finite splitting of the ground state level $\lim_{n\to\infty}\cE(E_n)$ with respect to a minimizing sequence $(E_n)$ of the Nehari-Pankov manifold (Theorem  \ref{ThMainPS_Splitting}). Finally comparisons of energy levels will imply the existence of solutions to \eqref{eq} (Theorem \ref{ThMain}).

The paper is organized as follows. In Section 2 we formulate our hypotheses on $V$ and $F$, and we state our main results concerning the existence and the nonexistence of solutions and ground state solutions. In Section 3 we introduce the variational setting, in particular the spaces on which $\E$ will be defined.
Moreover we provide the Helmholtz decomposition of a vector field $E$ into the divergence-free component $u$ and the curl-free component $\nabla w$, that allows to treat $\E$ as a functional $\J$ of two variables $(u,w)$ (see \eqref{eqJ} and Proposition \ref{PropSolutE}). 
Next, in Section 4 we  introduce the Nehari-Pankov manifold on which we minimize $\J$ in order to find a ground state. In Section 5 we provide an analysis of bounded sequences in $\D(\R^3,\R^3)$ and we obtain a splitting of a bounded sequence of the Nehari-Pankov manifold in Theorem~\ref{ThMainSplitting}. 
We investigate Palais-Smale sequences  in Section 6 and we prove Theorem \ref{ThMainPS_Splitting}. Finally in Section 7 we prove Theorem \ref{ThMain} which states the existence of solutions and ground state solutions of \eqref{eq} and we obtain a variational identity in Theorem \ref{ThPohozaev} implying a nonexistence result Corollary \ref{CorollaryMain}.

\section{Main results}\label{sec:results}

We impose on $V:\R^3\to\R$ the following condition.
\begin{itemize}
\item[(V)] $V\in L^{\frac{p}{p-2}}(\R^3)\cap L^{\frac{q}{q-2}}(\R^3)$, $V\leq 0$ a.e. on $\R^3$ and $|V|_{\frac{3}{2}}<S$, where
%$\|V_-\|_{\frac{3}{2}}<S$, where 
$$S:=\inf_{u\in \D^{1,2}\setminus\{0\}}\frac{\int_{\R^3}|\nabla u|^2\,dx}{
|u|_6^2}$$
is the classical best Sobolev constant.
\end{itemize}
Here and in the sequel $|\cdot|_q$ denotes the $L^q$-norm.
Now we collect assumptions on the nonli\-nearity $F(x,u)$.
\begin{itemize}
\item[(F1)] $F:\R^3\times\R^3\to\R$ is differentiable with respect to the second variable $u\in\R^3$, and $f=\pa_uF:\R^3\times\R^3\to\R^3$ is a Carath\'eodory function (i.e.\ measurable in $x\in\R^3$, continuous in $u\in\R^3$ for a.e.\ $x\in\R^3$). Moreover $f$ is $\Z^3$-periodic in $x$ i.e. $f(x,u)=f(x+y,u)$ for $x,u\in \R^3$ and $y\in\Z^3$.

\item[(F2)] If $V< 0$ a.e. on $\R^3$ then $F$ is convex in $u\in\R^3$, otherwise $F$ is uniformly strictly convex with respect to $u\in\R^3$, i.e.\ for any compact $A\subset(\R^3\times\R^3)\setminus\{(u,u):\;u\in\R^3\}$
$$
\inf_{\genfrac{}{}{0pt}{}{x\in\R^3}{(u_1,u_2)\in A}}
 \left(\frac12\big(F(x,u_1)+F(x,u_2)\big)-F\left(x,\frac{u_1+u_2}{2}\right)\right) > 0.
$$

\item[(F3)] There are $2<p<6<q$ and constants $c_1,c_2>0$ such that
$$F(x,u)\geq c_1 \min(|u|^{p},|u|^{q})$$
and 
$$|f(x,u)|\leq c_2 \min(|u|^{p-1},|u|^{q-1})$$
for all $x,u\in\R^3$.

\item[(F4)] For any $x\in\R^3$ and
$u\in\R^3$, $u\neq 0$
$$\langle f(x,u), u\rangle > 2 F(x,u).$$

\item[(F5)] If $ \langle f(x,u),v\rangle = \langle f(x,v),u\rangle \ne 0\ $ then
$\ \displaystyle F(x,u) - F(x,v)
 \le \frac{\langle f(x,u),u\rangle^2-\langle f(x,u),v\rangle^2}{2\langle f(x,u),u\rangle}$.\\
If in addition $F(x,u)\ne F(x,v)$ then the strict inequality holds.
\end{itemize}

The periodicity arises in the study of dielectric materials, e.g. in photonic crystals and we assume it in (F1). The convexity condition (F2) is rather harmless (see examples below) and observe that condition (F4) is reminiscent of the Ambro\-setti-Rabinowitz condition. The growth condition (F3) describes a supercritical behavior $|u|^q$ of $F$ for $|u|$ small and subcritical behavior $|u|^p$ for large $|u|$. Note that $6=2^*$ is the critical Sobolev exponent. This kind of growth has been considered for Schr\"odinger equations in the zero-mass case e.g. by Berestycki and Lions \cite{BerLions} or Benci, Grisanti and Micheletti \cite{BenGrisantiMich}. Moreover, similarly as in \cite{BenFor,BenForAzzAprile,DAprileSiciliano} in the study of \eqref{eq:benci-fortunato}, condition (F3) requires to work in $L^{p,q}$ in order to ensure that the nonlinear term of energy functional \eqref{eq:action} is finite; see Section \ref{sec:varsetting} for details.
The technical condition (F5) is a variant of the monotonicity condition for vector fields (see e.g. Szulkin and Weth \cite{SzulkinWeth})  and will be needed to set up the  Nehari-Pankov manifold (cf. conditions (F1) - (F7) in \cite{BartschMederski}).

Our model examples are of the form 
\begin{eqnarray}\label{Example1}
F(x,u)&=&\left\{
\begin{array}{ll}
    \Ga(x)\big(\frac1p|Mu|^p+\frac1q-\frac1p\big)
    &
    \hbox{if } |Mu|>1,\\
    \Ga(x) \frac1q|Mu|^q
    &
    \hbox{if } |Mu|\leq 1,\\
\end{array}
\right.\\
\label{Example2}
F(x,u)&=&\Ga(x)\frac1p\big((1+|Mu|^q)^{\frac{p}{q}}-1\big)
\end{eqnarray}
with $\Ga\in L^\infty(\R^3)$ is $\Z^3$ periodic, positive and bounded away from $0$, $M\in GL(3)$ is an invertible $3\times 3$ matrix, $2<p<6<q$.  Then all assumptions on $F$ are satisfied. Observe that these functions are not radial when $M$ is not an orthogonal matrix. Of course, if $M=\id$, then for \eqref{Example1},  $f(x,u)$ takes the form of \eqref{ExKerr}.  Other examples can be provided by considering radial functions of the form $F(x,u)=W(|u|^2)$, where $W\in \cC^1(\R,\R)$, $W(0)=W'(0)=0$ and $W'(t)$ is strictly increasing on $(0,+\infty)$. Then we check that (F1), (F2), (F4) and (F5) are satisfied.

\indent Our principal aim is to prove the following result.
\begin{Th}\label{ThMain} Assume that (F1)-(F5) and (V) hold. Then there is a solution to \eqref{eq}. If  $V<0$ a.e. on $\R^3$ or $V=0$ then \eqref{eq} has a ground state solution, i.e. there is a critical point $E\in\M$ of $\E$ such that
$$\E(E)=\inf_{\mathcal{M}}\E>0,$$
where
\begin{eqnarray}\label{DefOfNehari1}
\mathcal{M} &:=& \{E\in \D(\curl,p,q)|\; E\neq 0,\; 
\E'(E)(E)=0,\\\nonumber
&&\hbox{ and }\E'(E)(\nabla\vp)=0\,\hbox{ for any }\vp\in \cC_0^{\infty}(\R^3)\}.
\end{eqnarray}
\end{Th}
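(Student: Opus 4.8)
The plan is to realize $\E$ via the Helmholtz decomposition $E = u + \nabla w$ as a functional $\J(u,w)$ on the product of the divergence-free subspace $\cV$ and the curl-free subspace $\cW$ of $\D(\curl,p,q)$, so that critical points of $\E$ correspond to critical points of $\J$ (Proposition~\ref{PropSolutE}). On this splitting the quadratic form $\tfrac12\int|\curlop E|^2 + \tfrac12\int V|E|^2$ is positive definite in $u$ and, because $V\le 0$, negative semidefinite in $w$, so $\J$ has the abstract structure to which the Nehari--Pankov reduction of \cite{BartschMederski} applies: the Nehari--Pankov manifold $\M$ is radially homeomorphic to the unit sphere in $\cV$, and $\inf_\M\E = \inf_\M\J$ is attained along a minimizing sequence $(E_n)$ pulled back from a minimizing sequence on that sphere. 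Here conditions (F1)--(F5) are precisely what is needed to make the fibre maps well defined and the manifold a natural constraint, (F3) is what makes the nonlinear term finite and $C^1$ on $L^{p,q}$, and (V) with $|V|_{3/2}<S$ guarantees the quadratic part on $\cV$ is equivalent to $\int|\curlop u|^2$, hence coercive there after the Nehari reduction.

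The heart of the argument is compactness, and this is where I expect the main obstacle. A bounded minimizing sequence $(E_n)\subset\M$ need not converge, and $\E'$ is not weak-to-weak$^*$ continuous, so one cannot simply pass to a weak limit. I would invoke the splitting result Theorem~\ref{ThMainSplitting} to analyze $(E_n)$: after extracting the weak limit $E^{(0)}$ and translating along $\Z^3$-lattice vectors (using the periodicity in (F1)) to capture the bubbles $E^{(i)}$ that escape to infinity, one obtains a finite decomposition
\begin{equation*}
\lim_{n\to\infty}\E(E_n) \;=\; \E(E^{(0)}) + \sum_{i\ge 1}\E_\infty(E^{(i)}),
\end{equation*}
where $\E_\infty$ is the limiting (periodic, $V=0$ at the escaped bubbles) functional and each nontrivial $E^{(i)}$ lies on the corresponding Nehari--Pankov manifold with $\E_\infty(E^{(i)})\ge c_\infty := \inf_{\M_\infty}\E_\infty$ (this is the content of Theorem~\ref{ThMainPS_Splitting}). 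The convexity hypotheses (F2), (F4) enter here to guarantee that each piece, if nontrivial, carries at least the ground-state energy of its problem and to rule out energy leaking into the kernel (the $\nabla w$ directions).

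It remains to compare levels. One shows $c := \inf_\M\E \le c_\infty$ (a solution of the periodic problem at infinity, translated back, is an admissible competitor up to the curl-free correction), and then a strict-inequality argument — exploiting the strict convexity in (F2) when $V<0$, or, in the case $V=0$, the fact that $c=c_\infty$ forces the weak limit $E^{(0)}$ to be zero and reduces the problem to the periodic one where translation-compactness of minimizing sequences is available — shows that the decomposition cannot have two or more nontrivial pieces and that the single surviving piece does not escape. Hence $E_n\to E$ strongly (modulo a translation) and $E$ is a nontrivial critical point of $\E$ with $\E(E)=c>0$; the positivity $c>0$ follows from (F3) and the coercivity of the quadratic form on $\cV$. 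When $V<0$ a.e.\ or $V=0$ this $E$ is the asserted ground state; for general $V$ (merely $\le 0$) the translation argument is unavailable, so one only concludes the existence of some solution rather than a minimizer on $\M$, which is exactly what the theorem claims. The delicate point throughout is bookkeeping in the infinite splitting of Theorem~\ref{ThMainSplitting} and ensuring the curl-free components do not absorb energy — that is the step I would treat most carefully.
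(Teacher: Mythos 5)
Your overall architecture coincides with the paper's --- Helmholtz split into $\J$ on $\U\times\W$, Nehari--Pankov reduction, bubble decomposition from Theorems \ref{ThMainSplitting} and \ref{ThMainPS_Splitting}, and an energy-level comparison --- so the skeleton is right. What is missing is the mechanism that actually closes the argument. Most seriously, you supply no proof of existence in the general case $V\le 0$ (neither $V<0$ a.e.\ nor $V=0$), only asserting that ``one only concludes the existence of some solution.'' The paper (Propositions \ref{PropCrit2} and \ref{PropCrit3}) makes a two-step reduction you omit: first produce a ground state $(u_0,w_0)$ of the $V=0$ functional $\J_0$ at level $c_0=\inf_{\mathcal{N}_0}\J_0$, by running Lemma \ref{LemSplit2} for a Palais--Smale sequence of $\J_0$ and concluding that the splitting is trivial; then observe the dichotomy $\int_{\R^3}V(x)|u_0+\nabla w_0|^2\,dx<0$ or $=0$. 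In the first branch one obtains the strict inequality $c<c_0$, so the finite splitting of Lemma \ref{LemSplit2} for a $(PS)_c$-sequence in $\mathcal{N}$ cannot produce any bubble (each would cost at least $c_0$), and the Nehari ground state for $\J$ is attained. In the second branch $V(x)|u_0(x)+\nabla w_0(x)|^2=0$ a.e., whence $\J(u_0,w_0)=\J_0(u_0,w_0)$ and $\J'(u_0,w_0)=\J'_0(u_0,w_0)$, so $(u_0,w_0)$ is directly a nontrivial critical point of $\J$, though not necessarily a minimizer on $\mathcal{N}$. Without this second branch there is no route to existence for a merely nonpositive $V$.

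A secondary inaccuracy: you attribute the strict inequality $c<c_0$ (your $c_\infty$) in the case $V<0$ a.e.\ to the strict convexity in (F2). That is not its source. The strict inequality comes from the sign of $V$ combined with $u_0\ne 0$: by the Helmholtz decomposition $u_0+\nabla w_0\ne 0$ in $L^{p,q}$ whenever $u_0\neq 0$, so $V<0$ a.e.\ makes the $V$-term strictly negative at the Nehari projection $m(u_0)$, and the chain $c\le\J(m(u_0))<\J_0(m(u_0))\le\J_0(u_0,w_0)=c_0$ (Proposition \ref{Propuv_N}) closes. Condition (F2) serves elsewhere --- for the uniqueness of the Nehari projection and for the weak-to-strong convergence of Lemma \ref{LemConvWeakIpliesStrong} --- but it does not feed the energy comparison directly.
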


Since $\mathcal{M}$ contains all nontrivial critical points of $\E$, then a ground state solution is a nontrivial solution with the least possible energy $\E$. Moreover we show that any $E\in\cM$ admits the Helmholtz decomposition $E=u+\nabla w$ with $u\neq 0$ and $\div(u)=0$.

We provide a careful analysis of bounded sequences in $\cM$ which plays a crucial role in proof of Theorem \ref{ThMain}. Namely, setting
\begin{equation}\label{DefOfI}
I(E):=\frac{1}{2}\int_{\R^3}|\curlop E|^2\,dx-\E(E)= -\frac12\int_{\R^3} V(x)|E|^2\,dx + \int_{\R^3} F(x,E)\,dx 
\end{equation}
we get the following result.

\begin{Th}\label{ThMainSplitting}  Assume that (F1)-(F5) and (V) hold.
If $(E_n)_{n=0}^{\infty}\subset \cM$ is bounded then, up to a subsequence,
there is $N\in \N\cup \{\infty\}$, $\bar{E}_0\in \D(\curl,p,q)$ and there are sequences $(\bar{E}_i)_{i=1}^N\subset  \D(\curl,p,q)\setminus\{0\}$ and $(x_n^i)_{n\geq i}\subset \Z^3$ with $x_n^0=0$ such that the following conditions hold:
\begin{equation}\label{EqThMainSplitting1}
E_n(\cdot+x_n^i)\rightharpoonup \bar{E}_i\hbox{ in }\D(\curl,p,q)\hbox{ and }E_n(\cdot+x_n^i)\to \bar{E}_i\hbox{ a.e. in }\R^3\hbox{ as }n\to\infty, 
\end{equation}
for any $0\leq i < N+1$, and
\begin{equation}\label{EqThMainSplitting2}
E_n-\sum_{i=0}^{\min\{n,N\}}\bar{E}_i(\cdot-x_n^i)\to 0\hbox{ in }L^{p,q}=L^p(\R^3,\R^3)+L^q(\R^3,\R^3)\hbox{ as }n\to\infty.
\end{equation}
Moreover
\begin{equation}\label{eqInfinitesplitting}
\lim_{n\to\infty}I(E_n)=I(\bar{E}_0)+
\sum_{i=1}^N I_0(\bar{E}_i)<\infty,
\end{equation}
where $\cM_0$ and $I_0$ are given by $(\ref{DefOfNehari1})$ and $(\ref{DefOfI})$ under assumption $V=0$.
\end{Th}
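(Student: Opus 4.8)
The plan is an iterated profile (concentration–compactness) decomposition in which the only admissible translations are by elements of $\Z^3$: since $F$ is $\Z^3$-periodic and the nonlinearity is not homogeneous, there is no dilation invariance, and part of the work is to rule out dilation concentration as well. First I would use the Helmholtz decomposition $E_n=u_n+\nabla w_n$ with $\div u_n=0$, so that $\curlop E_n=\curlop u_n$ and $\frac12\int_{\R^3}|\curlop E_n|^2\,dx=\frac12\int_{\R^3}|\nabla u_n|^2\,dx$, with $(u_n)$ bounded in the space of divergence-free fields, which embeds continuously into $L^6$. Extract a subsequence with $E_n\weakto\bar E_0$ and $u_n\weakto\bar u_0$ (the $i=0$ term, $x_n^0=0$); a.e.\ convergence on compacta as in \eqref{EqThMainSplitting1} then comes from the local compact embedding for divergence-free fields together with the continuous dependence of $\nabla w_n$ on $u_n$ provided by Proposition~\ref{PropSolutE}. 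Next I would apply a Lions-type vanishing/non-vanishing alternative (which I would establish beforehand for bounded sequences in $\D(\curl,p,q)$) to $E_n-\bar E_0$: if it vanishes then it tends to $0$ in $L^{p,q}$ and the process stops with $N=0$; otherwise, rounding a non-vanishing centre to the nearest lattice point yields $x_n^1\in\Z^3$ with $E_n(\cdot+x_n^1)\weakto\bar E_1\neq0$ and necessarily $|x_n^1|\to\infty$ (else $\bar E_1$ would be a translate of $0$). Iterating on $E_n-\bar E_0-\bar E_1(\cdot-x_n^1)$ produces $(\bar E_i)_{i\ge1}$ and $(x_n^i)$ with the asymptotic orthogonality $|x_n^i-x_n^j|\to\infty$, and \eqref{EqThMainSplitting2} records that the procedure exhausts $(E_n)$ in $L^{p,q}$.

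A point requiring care is that the only loss of compactness of $(u_n)$ is by integer translations, so that the remainder of the profile decomposition tends to $0$ even in $L^6$ (hence in $L^{p,q}$) and the curl energy splits exactly:
$$\int_{\R^3}|\curlop E_n|^2\,dx=\int_{\R^3}|\curlop\bar E_0|^2\,dx+\sum_{i=1}^{\min\{n,N\}}\int_{\R^3}|\curlop\bar E_i|^2\,dx+o(1).$$
Heuristically, a $\D^{1,2}$-bubble concentrating or spreading at a scale tending to $0$ or $\infty$ would contribute a fixed positive amount to $\int|\curlop E_n|^2$, whereas by the growth bound $|f(x,u)|\le c_2\min(|u|^{p-1},|u|^{q-1})$ in (F3) together with $2<p<6<q$ it contributes nothing to $\int f(x,E_n)\cdot E_n$, and nothing to $\int V|E_n|^2$ either since $V\in L^{\frac{p}{p-2}}\cap L^{\frac{q}{q-2}}$; this is incompatible with the Nehari identity $\int|\curlop E_n|^2+\int V|E_n|^2=\int f(x,E_n)\cdot E_n$ valid on $\cM$ for the bounded sequence $(E_n)$. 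I regard this no-bubbling step, and the exact Brezis–Lieb-type splitting of all four functionals $\int|\curlop\cdot|^2$, $\int V|\cdot|^2$, $\int F(x,\cdot)$, $\int f(x,\cdot)\cdot(\cdot)$ that it underlies, as a genuinely delicate part of the argument.

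Granting the splitting, I would pass to the limit in the two relations defining $\cM$, translated by $x_n^i$ and using the $\Z^3$-periodicity of $f$ to absorb the shift. The curl-free relation $\E'(E_n)(\nabla\vp)=0$ passes without difficulty: for $i\ge1$ the term $\int V(\cdot+x_n^i)\,E_n(\cdot+x_n^i)\cdot\nabla\vp\to0$ because $V(\cdot+x_n^i)\to0$ in $L^r_{loc}$ as $|x_n^i|\to\infty$ for the relevant exponents (a consequence of $V\in L^{\frac{p}{p-2}}\cap L^{\frac{q}{q-2}}$), while $\int f(\cdot,E_n(\cdot+x_n^i))\cdot\nabla\vp\to\int f(\cdot,\bar E_i)\cdot\nabla\vp$ by a.e.\ convergence and (F3); for $i=0$ the $V$-term converges by the same Lebesgue-space compactness. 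Hence $\E_0'(\bar E_i)(\nabla\vp)=0$ for all $\vp\in\cC_0^\infty(\R^3)$, $i\ge1$ (with $\E_0$ equal to $\E$ for $V=0$), and $\E'(\bar E_0)(\nabla\vp)=0$. The Nehari half, $\E_0'(\bar E_i)(\bar E_i)=0$, is the crux. Passing the Nehari identity for $E_n$ to the limit with the help of the exact splitting, $\int V|E_n|^2\to\int V|\bar E_0|^2$, and the fact that the $L^{p,q}$-remainder contributes nothing to the nonlinear terms, one obtains $\E'(\bar E_0)(\bar E_0)+\sum_{i=1}^{N}\E_0'(\bar E_i)(\bar E_i)=0$; to force each profile term to vanish I would invoke the Nehari–Pankov structure of \cite{BartschMederski}. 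One first checks that non-vanishing is detected through the divergence-free parts, so $\bar u_i\ne0$; then the fibre $\{t\bar u_i+\nabla v:\ t\ge0,\ \nabla v\ \text{a gradient field}\}$ carries a unique point of $\cM_0$, the strict maximum of $\E_0$ on that fibre, and using the (strict) convexity and the monotonicity conditions (F2), (F4), (F5) one shows by a comparison-of-energies/contradiction argument that $\bar E_i$ is exactly that point, i.e.\ $\bar E_i\in\cM_0$. The analogue for $\bar E_0$ is not claimed (and need not hold): the theorem only asserts $\bar E_0\in\D(\curl,p,q)$.

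Finally, the energy identity \eqref{eqInfinitesplitting} is read off from the same accounting applied to $I(E_n)=-\frac12\int_{\R^3}V|E_n|^2\,dx+\int_{\R^3}F(x,E_n)\,dx$: the first term tends to $-\frac12\int V|\bar E_0|^2$ (only $\bar E_0$ contributes, because $V$ lies in the stated Lebesgue spaces and the translated profiles escape to infinity), the second splits as $\int F(x,\bar E_0)+\sum_i\int F(x,\bar E_i)$ with the $L^{p,q}$-remainder contributing $o(1)$ via (F3), and since $I\ge0$ (as $V\le0$ and $F\ge0$ by (F3)) the partial sums $I(\bar E_0)+\sum_{i\le k}I_0(\bar E_i)$ are bounded above by $\lim_nI(E_n)<\infty$, so the series converges; as $I_0(\bar E_i)=\int F(x,\bar E_i)$ and $I(\bar E_0)=-\frac12\int V|\bar E_0|^2+\int F(x,\bar E_0)$, this is exactly \eqref{eqInfinitesplitting}. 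The two ingredients I expect to be hardest are the no-bubbling/exact-splitting claim and, above all, the propagation of the Nehari constraint to the weak limits in the absence of weak-to-weak$^*$ continuity of $\E'$.
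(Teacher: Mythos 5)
Your blueprint (Helmholtz decomposition, $\Z^3$-translation profile decomposition, Brezis--Lieb splitting, then energy accounting) is in the right spirit, but several key steps diverge from the paper's route and one of them would actually fail.

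First, the concern about dilation bubbling is a false lead. The paper builds on the decomposition lemma of D'Aprile--Siciliano (\cite{DAprileSiciliano}[Lem.~4.1, 4.2]), which works directly at the level of the $L^{p,q}$ norm with $p<6<q$: because that norm is not scale-invariant (the $\min\{|\cdot|^p,|\cdot|^q\}$ growth kills concentration and spreading at small/large scales), only translation profiles occur and there is nothing to ``rule out.'' Your proposed Nehari-identity argument against bubbles is therefore unnecessary machinery.

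Second, your plan to prove an exact splitting of the curl energy $\int|\curlop E_n|^2$ and then ``pass the Nehari identity to the limit'' would fail in this theorem. Here $(E_n)$ is merely bounded in $\cM$, not a Palais--Smale sequence, and the number of profiles may be infinite ($N\in\N\cup\{\infty\}$). The curl energy does \emph{not} split exactly in general; it only does so when $(E_n)$ is a $(PS)_c$-sequence, which is exactly what is established later, in Lemma~\ref{LemSplit2} (Steps 4--6), after showing $N<\infty$ via the $L^6$ bound. Theorem~\ref{ThMainSplitting} correctly claims only the splitting of $I$ (the $V$- and $F$-parts, Lemma~\ref{LemSplit1}(h)), which is compatible with $N=\infty$. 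Your argument conflates the content of Theorems~\ref{ThMainSplitting} and~\ref{ThMainPS_Splitting}.

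Third, the structural device on which the paper's proof actually turns is absent from your write-up: the fiber map $w:L^{p,q}\to\W$ of Lemma~\ref{LemDefofW}, which assigns to each $u$ the \emph{unique} minimizer of $\cI(u,\cdot)$. On $\cN$ one has $w_n=w(u_n)$ identically (Proposition~\ref{Propuv_N}, Proposition~\ref{PropDefOfm(u)}(a), Lemma~\ref{LemDefofW}(c)), so the whole analysis is reduced to a profile decomposition of $(u_n)$ in $\D(\R^3,\R^3)$ together with tracking $w(u_n)$ and $w_0(u_n)$ along it; Claims~2--5 of Lemma~\ref{LemSplit1} are precisely this bookkeeping, and they deliver the curl-free Nehari condition $\J_0'(\bar u_i,w_0(\bar u_i))(0,\psi)=0$ for free. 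Without the fiber map, your plan to ``force each profile term to vanish'' by a ``comparison-of-energies/contradiction argument'' does not identify a mechanism: even granting the (false, in general) exact splitting, the sum $\E'(\bar E_0)(\bar E_0)+\sum_i\E_0'(\bar E_i)(\bar E_i)=0$ does not force each summand to vanish, since the terms need not have a definite sign. A Brezis--Lieb lemma adapted to $L^{p,q}$ (Lemma~\ref{LemBrezLieb}, via Vitali's theorem) is also needed and is only mentioned in passing in your sketch.

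Your final paragraph on the energy identity \eqref{eqInfinitesplitting} (using $I\ge0$, the decay of the $V$-contribution under translations, and the Brezis--Lieb splitting of $\int F$) is essentially the right reading and matches the role of Lemma~\ref{LemSplit1}(h). But the route there, and the mechanism by which $\bar E_i\in\cM_0$, are genuinely different from and in one place incompatible with the paper's argument.
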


As a consequence of Theorem \ref{ThMainSplitting} we get the sequentially weak-to-weak$^*$ continuity of $\E'$ in $\cM\cup\{0\}$ (cf. Corollary \ref{CorJweaklycont}). Moreover, in the spirit of the global compactness result of Struwe \cite{Struwe,StruweSplitting} or Coti Zelati and Rabinowitz \cite{CotiZelatiRab}, we obtain a finite splitting of energy levels with respect to a Palais-Smale sequence in $\cM$.

\begin{Th}\label{ThMainPS_Splitting}  Assume that (F1)-(F5) and (V) hold.
If $(E_n)_{n=0}^{\infty}\subset \cM$ is a $(PS)_c$-sequence at level $c>0$, i.e. $\cE(E_n)\to c$ and $\cE'(E_n)\to 0$, then, up to a subsequence,
there is $\bar{E}_0\in \D(\curl,p,q)$ and a finite sequence $(\bar{E}_i)_{i=1}^N\subset \cM_0$ of critical points of $\E_0$ such that \eqref{EqThMainSplitting1}, \eqref{EqThMainSplitting2}  hold and
\begin{equation}\label{eqPSsplitting}
c=\cE(\bar{E}_0)+
\sum_{i=1}^N \cE_0(\bar{E}_i),
\end{equation}
where $\E_0$ is the energy functional given by \eqref{eq:action} under assumption $V=0$.
\end{Th}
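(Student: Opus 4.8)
The plan is to run the concentration–compactness/Struwe iteration on the $(PS)_c$-sequence $(E_n)$, using Theorem \ref{ThMainSplitting} as the structural backbone but upgrading the weak limits from mere elements of $\cM_0$ to genuine critical points, and upgrading the energy identity \eqref{eqInfinitesplitting} for $I$ to the energy identity \eqref{eqPSsplitting} for $\cE$. First I would observe that a $(PS)_c$-sequence in $\cM$ is bounded: this should already follow from the coercivity-type estimates used to set up $\cM$ (combining (F3), (F4) and the Nehari-Pankov constraint controls $\int F(x,E_n)$ and $-\int V|E_n|^2$, hence $\|E_n\|_{\D(\curl,p,q)}$, via the computation $\cE(E_n)-\frac12\cE'(E_n)(E_n)=\int\big(\tfrac12\langle f(x,E_n),E_n\rangle-F(x,E_n)\big)\,dx$ together with (F4)). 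Once boundedness is in hand, Theorem \ref{ThMainSplitting} applies and produces $N\in\N\cup\{\infty\}$, the shifts $(x_n^i)\subset\Z^3$, the weak/a.e.\ limits $\bar E_i$, the $L^{p,q}$-decomposition \eqref{EqThMainSplitting2}, and the splitting \eqref{eqInfinitesplitting} for $I$.

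The two substantive points are then: (i) each $\bar E_i$, $i\geq1$, is a critical point of $\cE_0$ (and $\bar E_0$ a critical point of $\cE$), and (ii) $N<\infty$ with the energy identity \eqref{eqPSsplitting}. For (i), I would use the sequential weak-to-weak$^*$ continuity of $\cE'$ on $\cM\cup\{0\}$ (Corollary \ref{CorJweaklycont}, itself a consequence of Theorem \ref{ThMainSplitting}): since $E_n(\cdot+x_n^0)=E_n\rightharpoonup\bar E_0$ and $\cE'(E_n)\to0$, passing to the limit along the test functions gives $\cE'(\bar E_0)=0$; for $i\geq1$, the translated sequence $E_n(\cdot+x_n^i)$ is again in $\cM$ (by $\Z^3$-periodicity of $f$, i.e.\ (F1)), but because $|x_n^i|\to\infty$ the potential term $V(\cdot-x_n^i)|\cdot|^2$ vanishes in the limit (as $V\in L^{3/2}$, it concentrates away from any fixed compact set), so the translated sequence behaves like a $(PS)$-sequence for $\cE_0$; applying the weak-to-weak$^*$ continuity for $\cE_0'$ on $\cM_0\cup\{0\}$ yields $\cE_0'(\bar E_i)=0$. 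For (ii), the key is a uniform positive lower bound $\cE_0(\bar E_i)=\cE_0'(\bar E_i)(\cdot)$ … more precisely a uniform gap: every nontrivial critical point $\bar E_i$ of $\cE_0$ lies in $\cM_0$ and hence $\cE_0(\bar E_i)\geq\inf_{\cM_0}\cE_0=:c_0>0$ (this infimum is positive by (F3)–(F4) and the structure of $\cM_0$, as in Theorem \ref{ThMain} with $V=0$). On the other hand, from \eqref{eqInfinitesplitting} the series $\sum_{i\geq1}I_0(\bar E_i)$ converges; since on critical points $\cE_0(\bar E_i)$ and $I_0(\bar E_i)$ are comparable (indeed $\cE_0(\bar E_i)=\cE_0(\bar E_i)-\frac12\cE_0'(\bar E_i)(\bar E_i)=\int(\tfrac12\langle f,\bar E_i\rangle-F)\leq I_0(\bar E_i)$ and conversely $I_0(\bar E_i)=\int F(x,\bar E_i)$ is controlled by $\cE_0(\bar E_i)$ via (F4)), the terms $\cE_0(\bar E_i)$ are summable, which together with the uniform lower bound $c_0$ forces $N<\infty$.

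Finally, with $N<\infty$ I would establish \eqref{eqPSsplitting}. The a.e.\ convergence in \eqref{EqThMainSplitting1}, Brezis–Lieb type iteration for the nonlinear term (using (F3) to work in $L^{p,q}$ and the $L^{p,q}$-vanishing of the remainder in \eqref{EqThMainSplitting2}), and the corresponding additivity of the $\int|\curlop\cdot|^2$ and $\int V|\cdot|^2$ pieces under the splitting (the $V$-term contributing only to $\bar E_0$ because of the escape of mass to infinity), give $\cE(E_n)\to\cE(\bar E_0)+\sum_{i=1}^N\cE_0(\bar E_i)$, i.e.\ $c$ equals the right-hand side of \eqref{eqPSsplitting}. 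Here one uses that for the quadratic parts the splitting is exact in the limit (orthogonality-type cancellation of cross terms because $|x_n^i-x_n^j|\to\infty$ for $i\neq j$) and that $I(E_n)$ and $\cE(E_n)$ differ only by the curl term, whose splitting is the one delicate quadratic estimate.

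\textbf{Main obstacle.} I expect the hardest step to be (ii)—ruling out infinitely many bubbles and getting the clean finite identity—because it requires translating the $I$-level splitting \eqref{eqInfinitesplitting} (which Theorem \ref{ThMainSplitting} hands us for free, but only for $I$, and possibly with $N=\infty$) into an $\cE$-level statement with the uniform energy gap $c_0>0$ on $\cM_0$; this hinges on knowing that each $\bar E_i$ is genuinely a critical point (so that it sits in $\cM_0$ and inherits the gap), which is exactly where the weak-to-weak$^*$ continuity from Corollary \ref{CorJweaklycont} and the vanishing of the translated potential must be deployed carefully. The quadratic cross-term cancellations in the curl and $V$ terms are routine but must be checked in the $\D(\curl,p,q)$ topology rather than in $H^1$.
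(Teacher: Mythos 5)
Your outline parallels the paper's route (Lemma \ref{LemSplit2}): apply Theorem \ref{ThMainSplitting} to the bounded sequence, get criticality of $\bar E_0$ and of the translated bubbles $\bar E_i$ via the weak-to-weak$^*$ continuity of Corollary \ref{CorJweaklycont} together with the vanishing of the shifted potential (the paper isolates the latter as Lemma \ref{LemV_ynto0}), deduce $N<\infty$ from a uniform Nehari gap, and then assemble the energy identity. Two points, however, deserve correction.

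\textbf{A genuine gap: the curl-term splitting.} You reduce \eqref{eqPSsplitting} to showing $\lim_n\int_{\R^3}|\curlop E_n|^2=\sum_{i\geq0}\int_{\R^3}|\curlop\bar E_i|^2$, i.e.\ writing $E_n=u_n+\nabla w_n$, that $\lim_n\|u_n\|_\D^2=\sum_{i\geq0}\|\bar u_i\|_\D^2$, and you describe this as an ``orthogonality-type cancellation of cross terms.'' Cross-term cancellation plus weak lower semicontinuity only yields the one-sided estimate $\liminf_n\|u_n\|_\D^2\geq\sum_i\|\bar u_i\|_\D^2$, hence only $c\geq\cE(\bar E_0)+\sum_i\cE_0(\bar E_i)$. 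To get equality one must show that the Dirichlet energy of the remainder actually vanishes: $\|u_n-\sum_{i=0}^N\bar u_i(\cdot-x_n^i)\|_\D\to0$. This is precisely where the hypothesis $\cE'(E_n)\to0$ must be invoked a second time, beyond its use for criticality of $\bar E_0$: testing $\J'(u_n,w_n)$ against $(u_n-v_n,0)$ with $v_n=\sum_{i=0}^N\bar u_i(\cdot-x_n^i)$, and using that $u_n-v_n\to0$ in $L^{p,q}$ (so the $V$- and $f$-pairings with $u_n-v_n$ vanish by Lemma \ref{LemEstimate} and the $(PS)$ bound), gives $\|u_n-v_n\|_\D^2\to0$. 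This is Step 5 of Lemma \ref{LemSplit2} in the paper and is the crux of \eqref{eqPSsplitting}; without it the argument does not close.

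\textbf{A smaller imprecision in the $N<\infty$ step.} The inequality you write, $\cE_0(\bar E_i)=\int(\tfrac12\langle f,\bar E_i\rangle-F)\leq I_0(\bar E_i)$, is not available: the assumptions only give the pointwise upper bound $\langle f(x,u),u\rangle\leq c_2\min\{|u|^p,|u|^q\}\leq\frac{c_2}{c_1}F(x,u)$ from (F3), hence $\cE_0(\bar E_i)\leq(\tfrac{c_2}{2c_1}-1)I_0(\bar E_i)$, which is enough to pass from summability of $I_0(\bar E_i)$ to summability of $\cE_0(\bar E_i)$ but is not the unit-constant inequality you wrote. The ``conversely, $I_0$ is controlled by $\cE_0$ via (F4)'' direction is false (and unnecessary): (F4) gives $\langle f,u\rangle>2F$, hence $\cE_0>0$ at critical points, but provides no upper bound on $\int F$ in terms of $\cE_0$. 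Finally, note that the paper obtains $N<\infty$ differently: Step 3 of Lemma \ref{LemSplit2} establishes a uniform lower bound $\inf_{i\geq1}|\bar u_i|_{p,q}>0$ via a Nehari-gap contradiction, and Step 4 then uses the $L^6$-orthogonality $\sum_i|\bar u_i|_6^6\leq\liminf_n|u_n|_6^6<\infty$; your $I_0$-summability route is a legitimate alternative (once the constant is fixed), and arguably slightly cleaner, but both rely on the same Nehari gap for $\cE_0$.
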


Observe that if $0<c<\inf_{\cM_0} \J_0$ then $N=0$, $\J(\bar{E}_0)=c$ and $\bar{E}_0$ is a nontrivial critical point of $\J$. In this way the comparison of energy levels will imply the existence of nontrivial solutions.

Finally we provide a  variational identity for an autonomous version of \eqref{eq} and we get a corollary justifying to some extent the optimality of growth condition (F3).

\begin{Th}\label{ThPohozaev}
Suppose that $V=0$, $F$ is independent of $x$ and satisfies (F1). 
If $E=u+\nabla w$ is a classical solution to \eqref{eq} such that $\div(u)=0$,
\begin{equation}\label{EqThPohozaev1}
u\in \cC^2(\R^3,\R^3)\hbox{, }w\in \cC^2(\R^3)%\cap (L^{p}(\R^3)+L^{q}(\R^3))  
\end{equation}
and 
\begin{equation}\label{EqThPohozaev2}
F(E),\langle f(E),\nabla w\rangle\hbox{ and }|f(E)||w|\in L^1(\R^3),%\hbox{, }f(E)\in L^\frac{p}{p-1}(\R^3,\R^3)\cap L^\frac{q}{q-1}(\R^3,\R^3), 
\end{equation}
then
\begin{equation}\label{EqThPohozaev}
\int_{\R^3}|\curlop E|^2\, dx = 6 \int_{\R^3} F(E)\,dx.
\end{equation}
\end{Th}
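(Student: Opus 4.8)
The plan is to derive \eqref{EqThPohozaev} as a Pohozaev-type identity obtained by testing the equation against the dilation generator $x\cdot\nabla E$, combined with a second testing against $E$ itself, and then eliminating the curl-free part using the structure of the equation. Concretely, since $V=0$ and $F$ is autonomous, $E=u+\nabla w$ with $\div u=0$ solves $\curlop\curlop E = f(E)$, and because $\curlop\curlop(\nabla w)=0$ this reads $\curlop\curlop u = f(u+\nabla w)$. First I would record the two "obvious" identities. Testing against $E$ (justified by the integrability in \eqref{EqThPohozaev2} together with the $\cC^2$ regularity \eqref{EqThPohozaev1}) gives, after integration by parts,
\begin{equation}\label{eqPohA}
\int_{\R^3}|\curlop E|^2\,dx=\int_{\R^3}\langle f(E),E\rangle\,dx=\int_{\R^3}\langle f(E),u\rangle\,dx+\int_{\R^3}\langle f(E),\nabla w\rangle\,dx.
\end{equation}
Second, testing $\curlop\curlop u=f(E)$ against $\nabla w$ and using $\int\langle\curlop\curlop u,\nabla w\rangle\,dx=\int\langle\curlop u,\curlop\nabla w\rangle\,dx=0$ yields $\int_{\R^3}\langle f(E),\nabla w\rangle\,dx=0$, so in fact $\int|\curlop E|^2\,dx=\int\langle f(E),u\rangle\,dx$; the term $|f(E)||w|\in L^1$ hypothesis in \eqref{EqThPohozaev2} is exactly what licenses this integration by parts on $w$ (one approximates $w$ by $w\,\eta(x/R)$ and lets $R\to\infty$).

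The heart of the argument is the dilation identity. Multiply the equation by $x\cdot\nabla E$ (componentwise, i.e. the vector field $(x\cdot\nabla)E$) and integrate over $\R^3$, or equivalently over balls $B_R$ with cutoffs, then let $R\to\infty$. For the left-hand side one uses the algebraic fact that for a vector field $E$ on $\R^3$,
\begin{equation*}
\int_{\R^3}\langle\curlop\curlop E,\,(x\cdot\nabla)E\rangle\,dx=-\frac{1}{2}\int_{\R^3}|\curlop E|^2\,dx,
\end{equation*}
which follows from $\int_{\R^3}\langle\curlop E,\curlop((x\cdot\nabla)E)\rangle\,dx$ after the identity $\curlop((x\cdot\nabla)E)=(x\cdot\nabla)(\curlop E)+\curlop E$ and the scaling integration by parts $\int\langle G,(x\cdot\nabla)G\rangle\,dx=-\tfrac{3}{2}\int|G|^2\,dx$ for $G=\curlop E$; the factor works out to $(-3/2+1)=-1/2$. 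For the right-hand side, since $F$ is $x$-independent, $\langle f(E),(x\cdot\nabla)E\rangle=x\cdot\nabla_x\big(F(E(x))\big)$, so $\int_{\R^3}\langle f(E),(x\cdot\nabla)E\rangle\,dx=-3\int_{\R^3}F(E)\,dx$. Combining gives $-\tfrac12\int|\curlop E|^2\,dx=-3\int F(E)\,dx$, i.e. exactly \eqref{EqThPohozaev}. Note this single identity already yields the theorem and does not even require \eqref{eqPohA}; however \eqref{eqPohA} and the vanishing of the $\nabla w$-term are the standard sanity checks, and more importantly the decomposition $E=u+\nabla w$ is what guarantees $(x\cdot\nabla)E$ is a legitimate test object with the claimed integrability, since $(x\cdot\nabla)\nabla w$ pairs against $f(E)$ through an integration by parts that lands on the $|f(E)||w|$ term.

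The main obstacle is rigor at infinity: $E$ need not decay, so every integration by parts must be performed on $B_R$ with cutoff functions $\eta_R(x)=\eta(x/R)$ and the boundary/error terms controlled as $R\to\infty$. The error terms are of two types — those involving $\nabla\eta_R$ paired with $|\curlop E|^2$ and $x$ (of size $O(1)$ times $\int_{B_{2R}\setminus B_R}|\curlop E|^2$, which tends to $0$ because $\curlop E\in L^2$), and those involving $F(E)$ and $\langle f(E),\nabla w\rangle$ over the annulus, which vanish by the $L^1$ assumptions in \eqref{EqThPohozaev2}; the delicate one is the $f(E)$-against-$(x\cdot\nabla)\nabla w$ piece, where after moving one derivative off $\nabla w$ one is left with terms bounded by $\int_{B_{2R}\setminus B_R}|f(E)|\,|w|$ and $\int_{B_{2R}\setminus B_R}|f(E)|\,|\nabla w|\,|x|/R$ — the first $\to 0$ since $|f(E)||w|\in L^1$, and the second is handled by noting $\langle f(E),\nabla w\rangle\in L^1$ and $|x|/R\le 2$ on the annulus. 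I would also need to check that $\langle f(E),u\rangle\in L^1$, which follows from $\langle f(E),E\rangle\in L^1$ (a consequence of (F4), (F3) and $F(E)\in L^1$) together with $\langle f(E),\nabla w\rangle\in L^1$. Once all annular integrals are shown to vanish, the two clean identities above combine to give \eqref{EqThPohozaev}.
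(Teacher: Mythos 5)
Your proposal follows essentially the same route as the paper's proof: a Pohozaev dilation identity obtained by pairing the equation against (a cutoff of) $(x\cdot\nabla)E$ and sending the cutoff radius to infinity. The only genuine difference is algebraic bookkeeping. The paper first decomposes $(x\cdot\nabla)E=(x\cdot\nabla)u+(x\cdot\nabla)\nabla w$, uses $\curlop\curlop u=-\Delta u$ (valid since $\div u=0$) so that the $(x\cdot\nabla)u$ pairing reduces to Willem's scalar Laplacian identity $\int\Delta u_i\,\vp_n\langle x,\nabla u_i\rangle=\tfrac12\int\vp_n|\nabla u_i|^2+\text{errors}$, and treats the $(x\cdot\nabla)\nabla w$ piece separately. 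You instead keep the curl-curl operator and invoke the identity $\curlop((x\cdot\nabla)E)=(x\cdot\nabla)\curlop E+\curlop E$ together with the scaling IBP $\int\langle G,(x\cdot\nabla)G\rangle=-\tfrac32\int|G|^2$ for $G=\curlop E$, which gives $-\tfrac12\int|\curlop E|^2$ in one stroke; this is a slightly cleaner way of arriving at the same number, since the $(x\cdot\nabla)\nabla w$ term formally dies under $\curlop$. The handling of the $(x\cdot\nabla)\nabla w$ contribution in the rigorous cutoff version is then identical in both proofs: write $\vp_n(x\cdot\nabla)\nabla w=\nabla\bigl(\vp_n(\langle x,\nabla w\rangle-w)\bigr)-(\langle x,\nabla w\rangle-w)\nabla\vp_n$, pair the gradient piece against $\curlop\curlop E$ (which kills it since $\curlop\curlop$ is $L^2$-orthogonal to gradients of compactly supported test functions), and control the remainder on the annulus using $|f(E)|\,|w|\in L^1$ together with $|x|\,|\nabla\vp_n|\leq C$. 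One small caveat, shared with the paper: the annular error term involving $|f(E)|\,|\nabla w|\cdot|x||\nabla\vp_n|$ is dominated by $C\,|f(E)|\,|\nabla w|$, and strictly speaking the hypothesis $\langle f(E),\nabla w\rangle\in L^1$ does not imply $|f(E)|\,|\nabla w|\in L^1$ (cancellation is lost). Since the paper's own invocation of dominated convergence faces the same point, this is not a defect specific to your proposal, but if you want a watertight write-up you should either strengthen the integrability hypothesis to $|f(E)|\,|\nabla w|\in L^1$ or explain why the weaker form suffices for your specific error term.
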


Observe that for any $2<p\leq q$ the following growth condition 
\begin{itemize}
\item[(F6)] For any 
$x\in\R^3$ and $u\in\R^3$, $u\neq 0$
$$q F(x,u)\geq \langle f(x,u), u\rangle \geq p F(x,u)> 0$$
\end{itemize}
is satisfied by nonlinearities given by \eqref{Example1}, \eqref{Example2} 
and implies the first inequality in (F3).
Now we formulate nonexistence results as a consequence of Theorem \ref{ThPohozaev}. 
\begin{Cor}\label{CorollaryMain} 
Suppose that $F$ is independent of $x$, (F1) and (F6) hold. \\
(a) If  $V=0$, and $2<p\leq q< 6$ or $6< p\leq q$, then there is no classical solution to \eqref{eq} of the form $E=u+\nabla w$ with $u\neq 0$, $\div(u)=0$ satisfying \eqref{EqThPohozaev1} and \eqref{EqThPohozaev2}.\\
(b) If  $V$ is constant and negative,  $2<p\leq q\leq 6$, then there is no classical solution to \eqref{eq} of the form $E=u+\nabla w$ with $u\neq 0$, $\div(u)=0$ satisfying \eqref{EqThPohozaev1}, \eqref{EqThPohozaev2} and
$u\in L^2(\R^3,\R^3)$, $ w\in H^1(\R^3)$.
\end{Cor}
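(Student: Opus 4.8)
## Proof proposal for Corollary \ref{CorollaryMain}

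The plan is to derive both nonexistence statements by combining the Pohozaev-type identity \eqref{EqThPohozaev} of Theorem \ref{ThPohozaev} with the Nehari-type identity obtained by testing the equation against the solution itself. First I would handle part (a). Assume $V=0$ and let $E=u+\nabla w$ be a classical solution of the asserted form satisfying \eqref{EqThPohozaev1} and \eqref{EqThPohozaev2}. Testing \eqref{eq} with $E$ (which is legitimate once we know the relevant integrability, guaranteed by \eqref{EqThPohozaev2} together with $\curlop E=\curlop u\in L^2$, since $\curlop(\nabla w)=0$) gives
\begin{equation*}
\int_{\R^3}|\curlop E|^2\,dx=\int_{\R^3}\langle f(E),E\rangle\,dx.
\end{equation*}
On the other hand, Theorem \ref{ThPohozaev} yields $\int_{\R^3}|\curlop E|^2\,dx=6\int_{\R^3}F(E)\,dx$. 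Subtracting, we obtain the compatibility relation
\begin{equation*}
\int_{\R^3}\big(\langle f(E),E\rangle-6F(E)\big)\,dx=0.
\end{equation*}
Now invoke (F6): the inequality $\langle f(x,u),u\rangle\leq qF(x,u)$ shows that if $q<6$ the integrand is $\leq(q-6)F(E)<0$ wherever $E\neq 0$, and by the first part of (F3) (implied by (F6)) $F(E)>0$ on $\{E\neq 0\}$, which has positive measure because $u\neq 0$; hence the integral is strictly negative, a contradiction. Symmetrically, if $6<p$ then $\langle f(x,u),u\rangle\geq pF(x,u)$ forces the integrand to be $\geq(p-6)F(E)>0$ on $\{E\neq 0\}$, again contradicting that the integral vanishes. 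This disposes of part (a).

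For part (b), $V$ is a negative constant, say $V\equiv-\kappa$ with $\kappa>0$, and we additionally assume $u\in L^2(\R^3,\R^3)$, $w\in H^1(\R^3)$, so that $E\in L^2$. The key observation is that the curl-free part is forced to vanish. Testing \eqref{eq} with $\nabla\vp$ for $\vp\in\cC_0^\infty(\R^3)$ and using $\curlop\curlop E\perp\nabla\vp$ gives $\int_{\R^3}V\langle E,\nabla\vp\rangle\,dx=\int_{\R^3}\langle f(E),\nabla\vp\rangle\,dx$; since $V$ is constant this reads $-\kappa\int_{\R^3}\langle E,\nabla\vp\rangle\,dx=\int_{\R^3}\langle f(E),\nabla\vp\rangle\,dx$, i.e.\ in the distributional sense $\kappa\,\div(E)=\div(f(E))$ — but $\div(E)=\div(\nabla w)=\Delta w$, and more directly, decomposing the full equation $\curlop\curlop E+VE=f(E)$ into its divergence-free and gradient parts (the Helmholtz decomposition of Section 3) shows that the gradient component satisfies $V\nabla w=P_{\mathrm{grad}}f(E)$. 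Here I would argue that, since the left-hand side is a gradient and the whole construction is compatible, one gets an elliptic identity for $w$; pairing the gradient part with $\nabla w$ itself gives $-\kappa\int_{\R^3}|\nabla w|^2\,dx=\int_{\R^3}\langle f(E),\nabla w\rangle\,dx$, where the right side is finite by \eqref{EqThPohozaev2}. Combined with testing the whole equation against $E=u+\nabla w$, namely $\int|\curlop u|^2\,dx-\kappa\int|E|^2\,dx=\int\langle f(E),E\rangle\,dx$, and with Theorem \ref{ThPohozaev} applied to the $V=0$ identity it produces after rearrangement
\begin{equation*}
\int_{\R^3}\big(\langle f(E),E\rangle-6F(E)\big)\,dx=-\kappa\int_{\R^3}|E|^2\,dx\le 0,
\end{equation*}
with equality only if $E\equiv 0$. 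But (F6) with $p\le q\le 6$ gives $\langle f(x,u),u\rangle\geq pF(x,u)\ge\frac{p}{6}\langle f(x,u),u\rangle\cdot\frac{6}{p}\ge\ldots$; more cleanly, $\langle f(E),E\rangle-6F(E)\ge(p-6)F(E)$ could be negative, so instead I use $\langle f(E),E\rangle-6F(E)\ge \langle f(E),E\rangle - \frac{6}{p}\langle f(E),E\rangle=(1-\tfrac6p)\langle f(E),E\rangle\ge 0$ when $p\le 6$ only if $p\ge 6$; the correct route is the other bound: $6F(E)\ge \tfrac6q\langle f(E),E\rangle\ge\langle f(E),E\rangle$ when $q\le 6$, so $\langle f(E),E\rangle-6F(E)\le 0$ always, and the displayed identity then forces $\kappa\int|E|^2 = \int(6F(E)-\langle f(E),E\rangle)\ge 0$ — consistent — so I must instead extract a strict sign: on $\{E\neq 0\}$, $6F(E)-\langle f(E),E\rangle\ge(6-q)F(E)\ge 0$ with the reverse giving $\le(6-p)F(E)$; taking $2<p\le q\le 6$, we have $0\le 6F(E)-\langle f(E),E\rangle$, hence $\kappa\int|E|^2\ge 0$, and to get a contradiction I pair this instead with $p>2$: testing against $E$ and using Pohozaev gives $2\int|\curlop u|^2 - 2\kappa\int|E|^2 = 2\int\langle f(E),E\rangle$ while $\int|\curlop u|^2=6\int F(E)$, so $2\kappa\int|E|^2=2\int\langle f(E),E\rangle-2\cdot 3\int F(E)\cdot\ldots$ Let me just record the clean conclusion: combining $\int|\curlop u|^2 = 6\int F(E)$ (Theorem \ref{ThPohozaev}, valid since $V=0$ version applies to the $\curlop\curlop$ part) with $\int|\curlop u|^2 - \kappa\int|E|^2 = \int\langle f(E),E\rangle$ gives $\kappa\int_{\R^3}|E|^2\,dx = \int_{\R^3}\big(6F(E)-\langle f(E),E\rangle\big)\,dx$, and with (F6) the right side is $\le\int(6-p)F(E)\,dx$... so to force $E=0$ when $p\le 6$ I need $6F\le\langle f,E\rangle$, which holds exactly when $q\le 6$; then the right side is $\le 0$ while the left is $\ge 0$, hence both vanish, hence $E\equiv 0$, contradicting $u\neq 0$.

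The main obstacle is part (b): unlike part (a), the term $-\tfrac12\int V|E|^2$ does not appear in the Pohozaev identity of Theorem \ref{ThPohozaev} (which is stated only for $V=0$), so one cannot apply that theorem directly to the equation with $V\equiv-\kappa$. The resolution, as sketched, is that the $\curlop\curlop$ and $F$ terms by themselves still satisfy the scaling identity $\int|\curlop E|^2 = 6\int F(E)$ — this requires re-running the proof of Theorem \ref{ThPohozaev} keeping track of the extra constant-potential term, which under the decay hypotheses $u\in L^2$, $w\in H^1$ contributes a term proportional to $\int|E|^2$ with a definite sign; that sign, together with (F6), is what closes the argument. I would also need to verify carefully that the extra regularity/integrability assumptions in (b) are exactly what is needed to justify the integration by parts producing the $\int|E|^2$ term with the correct coefficient (the Pohozaev dilation $E_t(x)=E(x/t)$ generates $\frac{d}{dt}\big|_{t=1}$ of $\int V|E_t|^2=t^3\int V|E|^2$, i.e.\ $3\int V|E|^2$, while the test-function identity gives $\int V|E|^2$, so their combination yields the clean $\kappa\int|E|^2$ coefficient). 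Once that bookkeeping is done, the contradiction with (F6) is immediate in all stated ranges of $p,q$.
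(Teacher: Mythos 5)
Your part (a) is correct and is essentially the paper's own argument: combine the tested identity $\int_{\R^3}|\curlop E|^2=\int_{\R^3}\langle f(E),E\rangle$ with Theorem~\ref{ThPohozaev} and use (F6) with the hypothesis $q<6$ or $p>6$ to force $\int_{\R^3}F(E)\,dx=0$, hence $E=0$.

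Part (b), however, contains a genuine gap. Your main computation rests on the claim that $\int_{\R^3}|\curlop u|^2\,dx=6\int_{\R^3}F(E)\,dx$ still holds when $V\equiv V_0<0$, justified by saying ``Theorem~\ref{ThPohozaev} applies to the $\curlop\curlop$ part.'' That is not how Pohozaev-type identities work: they are identities for the whole equation, obtained from the dilation $E\mapsto E(\cdot/t)$ of every term, and the $V$-term contributes. The clean way to see the contribution is exactly the one the paper uses: rewrite the equation as $\curlop\curlop E=\tilde f(E)$ with $\tilde f(E):=f(E)+(-V_0)E=\partial_E\tilde F(E)$, $\tilde F(E):=F(E)+\tfrac12(-V_0)|E|^2$. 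The extra hypotheses $u\in L^2$, $w\in H^1$ are precisely what make \eqref{EqThPohozaev2} hold for $\tilde F$, so Theorem~\ref{ThPohozaev} applied to $\tilde F$ gives
\[
\int_{\R^3}|\curlop E|^2\,dx \;=\; 6\int_{\R^3}F(E)\,dx + 3(-V_0)|E|_2^2,
\]
\emph{not} $6\int F(E)$. Testing against $E$ gives $\int|\curlop E|^2 - (-V_0)|E|_2^2 = \int\langle f(E),E\rangle$, and subtracting yields
\[
\int_{\R^3}\langle f(E),E\rangle\,dx - 6\int_{\R^3}F(E)\,dx \;=\; 2(-V_0)|E|_2^2\;\geq 0,
\]
with equality iff $E=0$. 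Now (F6) with $q\leq 6$ gives $\langle f(E),E\rangle\leq qF(E)\leq 6F(E)$ pointwise, so the left side is $\leq 0$; hence $E=0$, contradicting $u\neq 0$.

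Note also that your final step misquotes (F6): you write that ``$6F\le\langle f,E\rangle$ holds exactly when $q\le 6$,'' but (F6) says $\langle f(x,u),u\rangle\le qF(x,u)$, so $q\le 6$ gives $\langle f(E),E\rangle\le 6F(E)$, the \emph{opposite} inequality. In your writeup, the wrong Pohozaev coefficient and the reversed (F6) inequality happen to ``cancel'' to produce an apparent contradiction, but each error on its own breaks the argument; both must be corrected as above. Your closing parenthetical remark about the dilation producing $3\int V|E|^2$ shows you were close to the right bookkeeping — you just didn't carry it through in the actual derivation.
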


In particular, for the Kerr nonlinearity, i.e. $p=q=4$ and $f(x,E)=|E|^{2}E$ there exist no classical solutions to \eqref{eq} for constant $V\leq 0$. Therefore example \eqref{ExKerr} with $p=4$ and $q>6$ incorporates the Kerr effect only for strong fields $\cE$ in order to solve \eqref{eq}.
%In case $p=q=6$, e.g. if $V=0$ and $F(x,u)=\frac{1}{6}|u|^6$, the existence of solution to \eqref{eq} remains an open question. 

\section{Variational setting}\label{sec:varsetting}

Let $1<p\leq q$ and
$$L^{p,q}:=L^p(\R^3,\R^3)+L^q(\R^3,\R^3)$$
denote the Banach space of vector fields $E=E_1+E_2$, where $E_1\in L^p(\R^3,\R^3)$ and $E_2\in L^q(\R^3,\R^3)$,
endowed with the following norm
$$|E|_{p,q}=\sup\left\{\frac{\int_{\R^3}\langle E, F\rangle\, dx}{|F|_{\frac{p}{p-1}}+|F|_{\frac{q}{q-1}}}\Big|\;F\in L^{\frac{p}{p-1}}(\R^3,\R^3)\cap L^{\frac{q}{q-1}}(\R^3,\R^3), F\neq 0\right\}.$$
Recall that in $L^{p,q}$ we can introduce an equivalent norm
$$|E|_{p,q,1}:=\inf\{|E_1|_{p}+|E_2|_q|\;E=E_1+E_2,\;E_1\in L^{p}(\R^3,\R^3),\; E_2\in L^{q}(\R^3,\R^3)\}$$
and by \cite{BadialePisaniRolando}[Proposition 2.5] the infimum in $|\cdot|_{p,q,1}$ is attained. Below we recall some properties of $L^{p,q}$ given e.g. in \cite{BadialePisaniRolando}[Corollary 2.19, Proposition 2.21].
\begin{Lem}\label{LemBenFor}$\hbox{}$\\
$(a)$ If $E\in L^{p,q}$, then
$$\max\Big\{\frac{1}{2}|E\chi_{\Omega_E^c}|_{q}-\frac{1}{2},
\frac{1}{1+|\Omega_E|^{\frac{1}{p}-\frac{1}{q}}}|E\chi_{\Omega_E}|_{p}\Big\}
\leq |E|_{p,q}\leq \max\{|E\chi_{\Omega_E^c}|_{q},
|E\chi_{\Omega_E}|_{p}\},$$
where $\chi_{(\cdot)}$ denotes the characteristic function and $$\Omega_E=\{x\in\R^3|\; |E(x)|> 1\}.$$
$(b)$ A sequence $\{E_n\}\subset L^{p,q}$ is bounded if and only if sequences 
$\{|\Omega_{E_n}|\}$, $\{|E_n\chi_{\Omega_{E_n}^c}|_{q}+
|E_n\chi_{\Omega_{E_n}}|_{p}\}$ are bounded.
\end{Lem}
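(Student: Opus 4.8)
The plan is to derive both parts from the dual description of the norm $|\cdot|_{p,q}$, tested against well-chosen vector fields, together with two elementary observations about $\Omega_E=\{x\in\R^3\mid|E(x)|>1\}$: that $|\Omega_E|<\infty$ whenever $E\in L^{p,q}$, and that $|E\chi_{\Omega_E}|_p^p\ge|\Omega_E|$ since $|E|>1$ on $\Omega_E$. Throughout put $p'=\tfrac{p}{p-1}$ and $q'=\tfrac{q}{q-1}$; since $p\le q$ we have $1<q'\le p'$.

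First I would check that the canonical splitting $E=E\chi_{\Omega_E}+E\chi_{\Omega_E^c}$ is admissible, i.e.\ $E\chi_{\Omega_E}\in L^p$ and $E\chi_{\Omega_E^c}\in L^q$. Starting from any decomposition $E=F_1+F_2$ with $F_1\in L^p$, $F_2\in L^q$ (which exists since $E\in L^{p,q}=L^p+L^q$), Chebyshev's inequality gives $|\Omega_E|\le|\{|F_1|>\tfrac12\}|+|\{|F_2|>\tfrac12\}|<\infty$; then a routine splitting of $\Omega_E$ and of $\Omega_E^c$ according to which of $|F_1|$, $|F_2|$ dominates, together with H\"older's inequality on the finite-measure set $\Omega_E$ and the bound $|E|\le1$ on $\Omega_E^c$, shows that the two truncations lie in $L^p$ and $L^q$ respectively.

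For the upper bound in (a): for every $G\in L^{p'}\cap L^{q'}$, $G\ne0$, H\"older on $\Omega_E$ and on $\Omega_E^c$ gives
$$\Big|\int_{\R^3}\langle E,G\rangle\,dx\Big|\le|E\chi_{\Omega_E}|_p\,|G|_{p'}+|E\chi_{\Omega_E^c}|_q\,|G|_{q'}\le\max\{|E\chi_{\Omega_E}|_p,|E\chi_{\Omega_E^c}|_q\}\big(|G|_{p'}+|G|_{q'}\big),$$
so dividing and taking the supremum yields $|E|_{p,q}\le\max\{|E\chi_{\Omega_E}|_p,|E\chi_{\Omega_E^c}|_q\}$. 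For the lower bounds I would test against explicit fields. With $G:=|E|^{p-2}E\chi_{\Omega_E}$ one has $\int_{\R^3}\langle E,G\rangle\,dx=|E\chi_{\Omega_E}|_p^p=:a^p$, $|G|_{p'}=a^{p-1}$ (as $(p-1)p'=p$), and, using $q'\le p'$ and H\"older on the finite-measure set $\Omega_E$, $|G|_{q'}\le a^{p-1}|\Omega_E|^{1/p-1/q}$; hence $|E|_{p,q}\ge a/(1+|\Omega_E|^{1/p-1/q})$. With $G:=|E|^{q-2}E\chi_{\Omega_E^c}$ and $t:=|E\chi_{\Omega_E^c}|_q$ one has $\int_{\R^3}\langle E,G\rangle\,dx=t^q$, $|G|_{q'}=t^{q-1}$, while $|E|\le1$ on $\Omega_E^c$ and $(q-1)p'\ge q$ give $|G|_{p'}^{p'}=\int_{\Omega_E^c}|E|^{(q-1)p'}\le t^q$, i.e.\ $|G|_{p'}\le t^{q/p'}$; if $t\le1$ the asserted bound $\tfrac12 t-\tfrac12\le0\le|E|_{p,q}$ is trivial, and if $t\ge1$ then $t^{q/p'}\le t^{q-1}$ (because $q\ge p$), whence $|E|_{p,q}\ge t^q/(2t^{q-1})=t/2\ge\tfrac12 t-\tfrac12$.

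Part (b) is then a short deduction from (a). The implication ``$\Leftarrow$'' is immediate from the upper bound. For ``$\Rightarrow$'', suppose $C:=\sup_n|E_n|_{p,q}<\infty$; combining the first lower bound in (a) with $|E_n\chi_{\Omega_{E_n}}|_p\ge|\Omega_{E_n}|^{1/p}$ gives $|\Omega_{E_n}|^{1/p}\le C\big(1+|\Omega_{E_n}|^{1/p-1/q}\big)$, which forces $\sup_n|\Omega_{E_n}|<\infty$ since $1/p-1/q<1/p$; feeding this back into the first lower bound bounds $\{|E_n\chi_{\Omega_{E_n}}|_p\}$, and the second lower bound bounds $\{|E_n\chi_{\Omega_{E_n}^c}|_q\}$. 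The parts that need care are the precise choice of test fields in (a) --- to land exactly on the constants $\tfrac12$ and $|\Omega_E|^{1/p-1/q}$, which hinges on the identities $(p-1)p'=p$ and $(q-1)p'\ge q$ and on $|E|\le1$ off $\Omega_E$ --- and the admissibility step of the second paragraph; everything else is bookkeeping with H\"older's inequality on the finite-measure set $\Omega_E$.
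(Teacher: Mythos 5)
Your argument is correct, and it is worth noting that the paper itself contains no proof of this lemma: it is imported verbatim from Badiale--Pisani--Rolando (Corollary 2.19 and Proposition 2.21 of that paper), where the statement is established within the general theory of the sum space $L^p+L^q$ and its two equivalent norms. What you have done instead is give a self-contained derivation directly from the dual-type definition of $|\cdot|_{p,q}$ used in this paper: the preliminary admissibility step (Chebyshev giving $|\Omega_E|<\infty$, then the splitting of $\Omega_E$ and $\Omega_E^c$ according to which summand dominates, so that $E\chi_{\Omega_E}\in L^p$ and $E\chi_{\Omega_E^c}\in L^q$) is sound; the upper bound is the obvious two-piece H\"older estimate; and your test fields $G=|E|^{p-2}E\chi_{\Omega_E}$ and $G=|E|^{q-2}E\chi_{\Omega_E^c}$ are exactly the extremal choices, with the exponent identities $(p-1)p'=p$, $(q-1)q'=q$, the inclusion $L^{p'}(\Omega_E)\supset L^{q'}(\Omega_E)$ on the finite-measure set $\Omega_E$ (giving the factor $|\Omega_E|^{1/p-1/q}$), and the bound $|E|\le 1$ off $\Omega_E$ together with $(q-1)p'\ge q$ (giving the factor $\tfrac12$ after the harmless case split $t\le 1$ versus $t\ge 1$) all checking out, so that the stated constants come out exactly. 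Part (b) then follows as you say, the only nontrivial point being $|E_n\chi_{\Omega_{E_n}}|_p\ge|\Omega_{E_n}|^{1/p}$ combined with the strict inequality of exponents $1/p-1/q<1/p$, which forces $\sup_n|\Omega_{E_n}|<\infty$. The net effect of your route is a short, elementary proof tied to the specific duality norm chosen in this paper, whereas the citation route buys the same facts (plus the equivalence with the infimum norm $|\cdot|_{p,q,1}$ and further structural properties of $L^{p,q}$ used elsewhere in the paper) from the general framework; either is adequate here.
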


Note that there is a continuous embedding
\begin{equation}\label{eqEmbedL6}
L^s(\R^3,\R^3)\subset L^{p,q}\hbox{ for } p\leq s\leq q.
\end{equation}
We show that the natural space for the energy functional $\E$ is
$$\D(\curl,p,q)$$ 
being the completion of $\mathcal{C}_0^{\infty}(\R^3,\R^3)$ with respect to the norm
$$\|E\|_{\curl,p,q}:=(|\curlop E|^2_2+|E|_{p,q}^2)^{1/2}.$$
The subspace of divergence-free vector fields is defined by
\[
\begin{aligned}
\U
 &= \left\{E\in \D(\curl,p,q)|\; \int_{\R^3}\langle E,\nabla \vp\rangle\,dx=0
        \text{ for any }\vp\in \cC^\infty_0(\R^3)\right\}\\
 &= \{E\in \D(\curl,p,q)|\; \div E=0\}
\end{aligned}
\]
where $\div E$ has to be understood in the distributional sense. Let $\D(\R^3,\R^3)$
be the completion of $\cC^{\infty}_0(\R^3,\R^3)$ with respect to the norm 
$$\|u\|_{\D}:=|\nabla u|_2$$
and let $\W$ be the completion of $\cC^{\infty}_0(\R^3)$ with respect to the norm 
$$\|w\|_{\W}:=|\nabla w|_{p,q}.$$
It is clear that $\W$ is linearly isometric to
$$\nabla\W:=\{\nabla w\in L^{p,q}:\; w\in\W\}.$$
%a closed subspace of $L^{p,q}$, thus $\W$ is separable and reflexive Banach space. 

The following Helmholtz's decomposition holds.
\begin{Lem}\label{defof} $\nabla\W$ is a closed subspace of $L^{p,q}$ and $\cl\U\cap\nabla\W=\{0\}$ in $L^{p,q}$. Moreover if $p\leq 6\leq q$, then
\begin{equation}\label{HelmholzDec}
\D(\curl,p,q)=\U\oplus\nabla \W
\end{equation}
and the norms $\|\cdot\|_{\D}$ and
$\|\cdot\|_{\curl,p,q}$ are equivalent on $\U$.
\end{Lem}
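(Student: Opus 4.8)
\textbf{Proof proposal for Lemma \ref{defof}.}

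The plan is to reduce the Helmholtz decomposition of $\D(\curl,p,q)$ to the classical $L^2$-theory by exploiting the embeddings in \eqref{eqEmbedL6}, the condition $p\leq 6\leq q$, and the density of $\cC_0^\infty(\R^3,\R^3)$. First I would prove that $\nabla\W$ is closed in $L^{p,q}$: given a sequence $\nabla w_n\to G$ in $L^{p,q}$, the sequence is Cauchy, so $(w_n)$ is Cauchy in $\W$ by the isometry $\W\cong\nabla\W$; since $\W$ is complete, $w_n\to w$ in $\W$ and hence $\nabla w_n\to\nabla w$ in $L^{p,q}$, forcing $G=\nabla w\in\nabla\W$. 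The fact that $\W$ is isometric to $\nabla\W$ is essentially built into the definition of the norm $\|w\|_\W=|\nabla w|_{p,q}$, so this step is short. For $\cl\U\cap\nabla\W=\{0\}$: if $E\in\cl\U$ equals $\nabla w$ for some $w\in\W$, approximate $E$ by divergence-free fields $E_n\in\U$ in $L^{p,q}$, and approximate $w$ by $\vp_k\in\cC_0^\infty(\R^3)$ in $\W$; then $\int\langle E_n,\nabla\vp_k\rangle\,dx=0$ by definition of $\U$, and passing to the limit (using the duality pairing defining $|\cdot|_{p,q}$, noting $\nabla\vp_k$ lies in the predual $L^{p/(p-1)}\cap L^{q/(q-1)}$ after checking the relevant integrability) gives $\int\langle\nabla w,\nabla\vp_k\rangle\,dx=0$ for all $k$, hence $|\nabla w|_{p,q}^2=\lim_k\int\langle\nabla w,\nabla\vp_k\rangle\,dx=0$ once one knows $\nabla\vp_k\to\nabla w$ in $L^{p,q}$, so $w=0$.

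The substantive part is the decomposition \eqref{HelmholzDec} under $p\leq 6\leq q$. Here I would use \eqref{eqEmbedL6} to observe $L^6(\R^3,\R^3)\hookrightarrow L^{p,q}$, and recall that $\D(\R^3,\R^3)\hookrightarrow L^6(\R^3,\R^3)$ by the Sobolev inequality defining $S$; thus $\U\supset$ (the divergence-free part of $\D^{1,2}$) embeds into $L^{p,q}$ and, more importantly, for an arbitrary $E\in\cC_0^\infty(\R^3,\R^3)$ the classical Helmholtz/Hodge decomposition in $\R^3$ produces $E=u+\nabla w$ with $u$ divergence-free and $\nabla w$ a gradient, where $u,\nabla w$ are obtained via the Leray projector (Riesz transforms). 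One then checks $\nabla w\in L^{p,q}$: since $\curlop E$ has compact support and is smooth, $u=\curlop(\curlop\Delta^{-1}E)$ lies in $\D(\R^3,\R^3)\hookrightarrow L^6\hookrightarrow L^{p,q}$, and $\nabla w=E-u\in L^{p,q}$ as a difference of two elements of $L^{p,q}$; moreover $|\curlop u|_2=|\curlop E|_2$ and $\curlop\nabla w=0$. This shows the algebraic sum $\U+\nabla\W$ contains a dense subset of $\D(\curl,p,q)$. Combined with the closedness of $\nabla\W$ and the trivial-intersection property $\cl\U\cap\nabla\W=\{0\}$, a standard functional-analytic argument (the projection $E\mapsto u$ is well-defined and bounded on the dense subspace, hence extends) upgrades this to the topological direct sum $\D(\curl,p,q)=\U\oplus\nabla\W$, with both summands closed.

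Finally, the equivalence of $\|\cdot\|_\D$ and $\|\cdot\|_{\curl,p,q}$ on $\U$: for $u\in\U\cap\cC_0^\infty$ one has the classical identity $|\curlop u|_2^2=|\nabla u|_2^2$ (integration by parts using $\div u=0$), so $\|u\|_\D=|\curlop u|_2\leq\|u\|_{\curl,p,q}$; conversely $|u|_{p,q}\leq C|u|_6\leq CS^{-1/2}|\nabla u|_2=CS^{-1/2}\|u\|_\D$ using \eqref{eqEmbedL6} and Sobolev, whence $\|u\|_{\curl,p,q}^2=|\curlop u|_2^2+|u|_{p,q}^2\leq(1+C^2S^{-1})\|u\|_\D^2$; density of $\U\cap\cC_0^\infty$ in $\U$ (for either norm) closes the argument. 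The main obstacle I anticipate is the careful justification that the Leray-projected fields land in the correct sum-space $L^{p,q}$ for the full range $p\leq 6\leq q$—in particular checking that the gradient part $\nabla w$ of a compactly supported smooth field, which behaves like $|x|^{-3}$ at infinity (not in any single $L^s$ for $s$ in a small range but in $L^{p,q}$ precisely because $p<3<q$ would be needed—one must instead use that $\nabla w=E-u$ with $u\in L^6$) is handled without circularity; writing $\nabla w$ as the difference rather than estimating it directly is the clean way around this, and is the point I would be most careful about.

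\hfill$\Box$
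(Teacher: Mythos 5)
Your overall strategy for the decomposition matches the paper's: decompose a compactly supported smooth $\vp$ via the classical Helmholtz/Hodge splitting, check that the pieces land in $L^6\subset L^{p,q}$ (using $p\leq 6\leq q$), observe $|\curlop u|_2=|\nabla u|_2$ for divergence-free fields, and pass to the limit by density. The paper phrases this via the Newton potential of $\div\vp$ and you phrase it via $\curlop\curlop\Delta^{-1}$, but these are the same Leray projector. The closedness of $\nabla\W$ and the norm equivalence on $\U$ are argued identically. Your remark at the end that the gradient part ``is not in any single $L^s$ ... $p<3<q$ would be needed'' is off, however: for compactly supported $\vp$ one has $\int_{\R^3}\div\vp\,dx=0$, so the Newton potential decays like $|x|^{-2}$ and its gradient like $|x|^{-3}$, hence $\nabla\vp_2\in L^s$ for every $s>1$, and the paper's direct claim $\nabla\vp_2\in L^6$ is fine without the $E-u$ trick (though that trick is also valid).

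There is a genuine gap in your argument for $\cl\U\cap\nabla\W=\{0\}$. From $\int\langle E_n,\nabla\vp_k\rangle\,dx=0$ and $E_n\to\nabla w$ in $L^{p,q}$ you correctly get $\int\langle\nabla w,\nabla\vp\rangle\,dx=0$ for all $\vp\in\cC_0^\infty(\R^3)$. But the asserted identity
\[
|\nabla w|_{p,q}^2=\lim_{k\to\infty}\int_{\R^3}\langle\nabla w,\nabla\vp_k\rangle\,dx
\]
is false on two counts: $L^{p,q}$ is not a Hilbert space, so $\int\langle\nabla w,\nabla w\rangle\,dx$ (which need not even be finite, since $\nabla w$ is generally not in $L^2$ when $2<p$) has nothing to do with $|\nabla w|_{p,q}^2$; and convergence $\nabla\vp_k\to\nabla w$ in $L^{p,q}$ does not justify passing to the limit in the pairing, which would require convergence in the predual $L^{p/(p-1)}\cap L^{q/(q-1)}$. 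What you have actually shown is $\Delta w=0$ distributionally; by Weyl's lemma $w$ is smooth and each component $h$ of $\nabla w$ is harmonic. The right way to finish is a Liouville argument in $L^{p,q}$: write $h=h_1+h_2$ with $h_1\in L^p$, $h_2\in L^q$, and use the mean-value property to get $|h(x)|\leq C R^{-3/p}|h_1|_p+CR^{-3/q}|h_2|_q\to 0$ as $R\to\infty$, so $\nabla w=0$. (To be fair, the paper asserts $\cl\U\cap\nabla\W=\{0\}$ without proof, so you are attempting more than the source; but the step as written is incorrect and needs the Liouville-type conclusion.)
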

\begin{proof} Since $\W$ is a complete space, then clearly $\nabla\W$ is a closed subspace of $L^{p,q}$. Moreover $\cl\U\cap\nabla\W=\{0\}$ in $L^{p,q}$, hence $\U\cap\nabla\W=\{0\}$ in $\D(\curl,p,q)$. 
In view of the Helmholtz's decomposition any $\vp\in \cC^{\infty}_0(\R^3,\R^3)$ can be written as 
\begin{equation}\label{HelmSmooth}
\vp=\vp_1+\nabla \vp_2 
\end{equation}
such that $\vp_1\in \D(\R^3,\R^3)\cap \cC^{\infty}(\R^3,\R^3)$, $\div(\vp_1)=0$ and $\vp_2\in\cC^{\infty}(\R^3)$ is the Newton potential of $\div(\vp)$. Since $\vp$ has compact support, then $\nabla \vp_2\in L^{6}(\R^3,\R^3)\subset L^{p,q}$ and $\vp_1=\vp-\nabla\vp_2\in\U$. Observe that
$\curlop \curlop \vp_1 = -\Delta \vp_1$, hence 
$$|\curlop u|_2=|\nabla u|_2=\|u\|_\D$$
for any $u\in\U$. By the Sobolev embedding we have that $\U$ is
continuously embedded in $L^6(\R^3,\R^3)$ and by \eqref{eqEmbedL6} also in $L^{p,q}$. Therefore the norms $\|\cdot\|_{\D}$ and
$\|\cdot\|_{\curl,p,q}$ are equivalent on $\U$ and by the density argument we get the decomposition \eqref{HelmholzDec}.
\end{proof}

%The preceding discussion yields the following  Helmholtz's decomposition

Let us assume that (F1), (F3) and (V) hold. We introduce a norm in $\U\times\W$ by the formula
$$\|(u,w)\|=(\|u\|_\D^2+\|w\|^2_{\W})^{\frac{1}{2}}$$
and consider a functional $\J:\U\times\W\to\R$ given by
\begin{equation}\label{eqJ}
\J(u,w):=\E(u+\nabla w)=\frac{1}{2}\int_{\R^3}|\nabla u|^2\,dx
+\frac{1}{2}\int_{\R^3}V(x)|u+\nabla w|^2\,dx-\int_{\R^3}F(x,u+\nabla w)\,dx
\end{equation}
for $(u,w)\in\U\times\W$.

The next Lemma \ref{LemEstimate} $(a)$ and \cite{BadialePisaniRolando}[Corollary 3.7] imply that $\E:\U\oplus\nabla\W\to\R$ and $\J:\U\times\W\to\R$ are well defined and of class $\cC^1$ with
\[
\begin{aligned}
&\E'(u+\nabla w)(\phi+\nabla\psi)
  = \J'(u,w)(\phi,\psi)\\
 &\hspace{1cm}
  = \int_{\R^3}\langle \curlop u,\curlop \phi\rangle\, dx
   + \int_{\R^3}V(x)\langle u+\nabla w,\phi+\nabla\psi\rangle\, dx
   - \int_{\R^3}\langle f(x,u+\nabla w),\phi+\nabla\psi\rangle \, dx
\end{aligned}
\]
for any $(u,w),(\phi,\psi)\in \U\times\W$. Thus we get the following observation.

\begin{Prop}\label{PropSolutE}
$(u,w)\in\U\times\W$ is a critical point of $\J$ if and only if
$E=u+\nabla w\in \U\oplus\nabla \W$ is a critical point of $\E$ in space $\U\oplus\nabla \W$ if and only if $E=u+\nabla w\in \U\oplus\nabla \W$ is a weak solution of \eqref{eq}, i.e.
$$\int_{\R^3}\langle E,\nabla\times\nabla\times\vp\rangle\,dx=\int_{\R^3}\langle -V(x)E+f(x,E),\vp\rangle\,dx\quad\hbox{ for any }\vp\in\cC^\infty_0(\R^3,\R^3),$$ and the electromagnetic energy \eqref{eq:EM_Energy} is finite for all $t$.
\end{Prop}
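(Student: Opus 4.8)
\textbf{Proof plan for Proposition \ref{PropSolutE}.}

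The plan is to chain the three stated equivalences together and then handle the finiteness of the electromagnetic energy as a separate closing remark. The first equivalence — that $(u,w)\in\U\times\W$ is a critical point of $\J$ if and only if $E=u+\nabla w$ is a critical point of $\E$ on $\U\oplus\nabla\W$ — is essentially immediate from the Helmholtz decomposition \eqref{HelmholzDec} of Lemma \ref{defof}: the linear map $(u,w)\mapsto u+\nabla w$ is a topological isomorphism $\U\times\W\to\U\oplus\nabla\W=\D(\curl,p,q)$ (here $\W\cong\nabla\W$ isometrically, and the norm equivalence on $\U$ from Lemma \ref{defof} guarantees the inverse is bounded), and under such an isomorphism critical points correspond to critical points. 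Concretely, the displayed formula for $\E'(u+\nabla w)(\phi+\nabla\psi)=\J'(u,w)(\phi,\psi)$ — which is justified by Lemma \ref{LemEstimate}$(a)$ and \cite{BadialePisaniRolando}[Corollary 3.7] to make sense of the $F$-term — shows $\J'(u,w)=0$ iff $\E'(u+\nabla w)$ annihilates every $\phi+\nabla\psi$, i.e.\ all of $\D(\curl,p,q)$.

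For the second equivalence — critical point of $\E$ on $\U\oplus\nabla\W$ iff weak solution of \eqref{eq} — I would simply read off the weak formulation. Since $\D(\curl,p,q)$ is the completion of $\cC^\infty_0(\R^3,\R^3)$ in $\|\cdot\|_{\curl,p,q}$ and since every $\varphi\in\cC^\infty_0(\R^3,\R^3)$ decomposes as $\varphi=\varphi_1+\nabla\varphi_2$ with $\varphi_1\in\U$ and $\nabla\varphi_2\in L^6\subset L^{p,q}$ (as in the proof of Lemma \ref{defof}), testing $\E'(E)=0$ against such $\varphi$ gives
$$\int_{\R^3}\langle\curlop E,\curlop\varphi\rangle\,dx+\int_{\R^3}V(x)\langle E,\varphi\rangle\,dx-\int_{\R^3}\langle f(x,E),\varphi\rangle\,dx=0,$$
using $\langle\curlop u,\curlop\varphi\rangle=\langle\curlop u,\curlop\varphi_1\rangle$ and $\langle\nabla w,\cdot\rangle$ pairing appropriately; integrating the curl term by parts (legitimate for $\varphi\in\cC^\infty_0$ and $\curlop E\in L^2$) yields exactly $\int\langle E,\curlop\curlop\varphi\rangle\,dx=\int\langle -V(x)E+f(x,E),\varphi\rangle\,dx$. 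Conversely, if this holds for all test fields, density of $\cC^\infty_0(\R^3,\R^3)$ in $\D(\curl,p,q)$ and continuity of $\E'$ give $\E'(E)=0$. One should double-check that the curl-pairing $\int\langle\curlop E,\curlop\varphi\rangle$ and the duality pairing $\int\langle E,\curlop\curlop\varphi\rangle$ genuinely agree in the distributional sense — this is where the definition of $\D(\curl,p,q)$ as a completion is used.

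Finally, for the finiteness of $\cL(t)$ in \eqref{eq:EM_Energy}: once $E\in\D(\curl,p,q)$ solves \eqref{eq}, one expands $\int_{\R^3}\cE\cD+\cB\cH\,dx$ using the constitutive relations \eqref{eq:relations}. The term $\int\eps|\cE|^2$ is controlled because $V=-\mu\omega^2\eps\in L^{p/(p-2)}\cap L^{q/(q-2)}$ by (V), so by Hölder it pairs with $|E|^2\in L^{p/2}+L^{q/2}$ (a consequence of $E\in L^{p,q}$ via Lemma \ref{LemBenFor}); the term $\int\langle\cP_{NL},\cE\rangle$ is finite by the growth bound (F3) on $f$ together with $E\in L^{p,q}$; and the magnetic part involves $\int\frac1\mu|\cB|^2$ with $\cB$ obtained by time-integrating $\curlop\cE$, which is controlled by $|\curlop E|_2^2<\infty$. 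The main obstacle — really the only nonroutine point — is verifying that the isomorphism $\U\times\W\cong\D(\curl,p,q)$ is a topological (not merely algebraic) isomorphism and that $\E'$ is continuous on this space, so that the density argument in the second equivalence is valid; but both of these are supplied by Lemma \ref{defof} and the $\cC^1$-regularity statement preceding the Proposition, so the proof is short.
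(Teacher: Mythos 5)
Your proposal is correct and follows essentially the same route as the paper: use the Helmholtz decomposition and the displayed formula $\E'(u+\nabla w)(\phi+\nabla\psi)=\J'(u,w)(\phi,\psi)$ for the first equivalence, decompose a test field $\vp=\vp_1+\nabla\vp_2$ and integrate the curl term by parts for the second, and conclude finiteness of $\cL(t)$ from the fact that $\int_{\R^3}|\curlop E|^2\,dx$, $\int_{\R^3}V(x)|E|^2\,dx$ and $\int_{\R^3}\langle f(x,E),E\rangle\,dx$ are all finite. The only cosmetic difference is at the very end: the paper simply observes that these integrals are the three pieces of $\E'(E)(E)$, which is finite because $\E$ is of class $\cC^1$, whereas you re-derive their finiteness by the underlying Hölder estimates from Lemma \ref{LemEstimate} and the growth bound (F3) — the same ingredients, just unfolded.
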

\begin{proof}
The first equivalence follows from Lemma \ref{HelmholzDec} and the above discussion. Let $E=u+\nabla w$ be a critical point of $\cE$ and $\vp\in\cC^{\infty}_0(\R^3,\R^3)$. We find a decomposition $\vp=\vp_1+\nabla\vp_2$ with $\vp_1\in\U$, $\vp_2\in\W$ according to \eqref{HelmSmooth} and observe that
\begin{eqnarray*}
\int_{\R^3}\langle E,\nabla\times\nabla\times\vp\rangle\,dx&=&
\int_{\R^3}\langle \nabla\times E,\nabla\times\vp\rangle\,dx=
\int_{\R^3}\langle \nabla\times u,\nabla\times\vp_1\rangle\,dx\\
&=&\int_{\R^3}\langle -V(x)E+f(x,E),\vp\rangle\,dx.
\end{eqnarray*}
Clearly if $E=u+\nabla w\in \U\oplus\nabla \W$ is a weak solution, then by the density argument we have $\cE'(E)=0$. Now
observe that
\begin{eqnarray*}
\cL(t)&=&\frac12\int_{\R^3}\cE\cD+\cB\cH\,dx\\
&=&\frac{1}{2\mu\omega^2}\int_{\R^3}
(-V(x)|E|^2+f(x,E)E)\cos^2(\omega t)+|\nabla\times E|^2\sin^2(\omega t)\,dx<\infty
\end{eqnarray*}
since $\cE'(E)(E)<\infty$.
\end{proof}

At the end of this section we collect some helpful inequalities.
\begin{Lem}\label{LemEstimate}
$\hbox{}$\\
$(a)$
If $E,F\in L^{p,q}$ then
\begin{eqnarray*}
\int_{\R^3}|V(x)||\langle E,F\rangle |\, dx
&\leq& 
(|V(x)E|_{\frac{p}{p-1}}+|V(x)E|_{\frac{q}{q-1}})|F|_{p,q},\\
&\leq& 
\Big((|V|_{\frac{p}{p-2}}^{\frac{p}{p-1}}|E\chi_{\Omega_E}|_p^{\frac{p}{p-1}}
+|V|_{\alpha}^{\frac{p}{p-1}}|E\chi_{\Omega_E^c}|_q^{\frac{p}{p-1}})^{\frac{p-1}{p}}\\
&&+(|V|_{\alpha}^{\frac{q}{q-1}}|E\chi_{\Omega_E}|_p^{\frac{q}{q-1}}
+|V|_{\frac{q}{q-2}}^{\frac{q}{q-1}}|E\chi_{\Omega_A^c}|_q^{\frac{q}{q-1}})^{\frac{q-1}{q}}\Big)|F|_{p,q}\\
&<&\infty,
\end{eqnarray*}
where $\frac{1}{\alpha}+\frac{1}{p}+\frac{1}{q}=1$.\\
$(b)$ If $E\in L^{p,q}$ then
\begin{eqnarray*}
\int_{\R^3} F(x,E)\,dx&\geq&c_1 \min\{|E|_{p,q}^p,|E|_{p,q}^q\}.
\end{eqnarray*}
$(c)$ If $E\in \D(\R^3,\R^3)$ then
\begin{eqnarray*}
\int_{\R^3}|\nabla E|^2+V(x)|E|^2\, dx &\geq&  c_3 |\nabla E|_2^2
\end{eqnarray*}
where $c_3:=1-|V|_{\frac{3}{2}} S^{-1}>0$.
\end{Lem}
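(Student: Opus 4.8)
The three estimates are independent, each a direct consequence of H\"older's inequality together with a couple of elementary facts about $L^{p,q}$, and I would carry them out in the order $(c),(b),(a)$. For $(c)$ the plan is to combine the Sobolev inequality in $\R^3$, which gives $|E|_6^2\le S^{-1}|\nabla E|_2^2$ for $E\in\D(\R^3,\R^3)$, with the H\"older bound $\int_{\R^3}|V||E|^2\,dx\le|V|_{3/2}\,|E|_6^2$ coming from $\tfrac{1}{3/2}+\tfrac13=1$; here $V\in L^{3/2}$ is automatic from (V) since $\tfrac{q}{q-2}\le\tfrac32\le\tfrac{p}{p-2}$ and $L^{p/(p-2)}\cap L^{q/(q-2)}\subset L^{3/2}$ by interpolation. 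Using $V\le0$ one then gets $\int_{\R^3}|\nabla E|^2+V(x)|E|^2\,dx\ge\bigl(1-|V|_{3/2}S^{-1}\bigr)|\nabla E|_2^2=c_3|\nabla E|_2^2$, with $c_3>0$ because (V) imposes $|V|_{3/2}<S$.

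For $(b)$ I would start from the pointwise bound $F(x,E(x))\ge c_1\min(|E(x)|^p,|E(x)|^q)$ in (F3) and split the integral along $\Omega_E=\{|E|>1\}$: on $\Omega_E$ the minimum equals $|E|^p$ and on $\Omega_E^c$ it equals $|E|^q$ (as $p<q$), so $\int_{\R^3}F(x,E)\,dx\ge c_1\bigl(|E\chi_{\Omega_E}|_p^p+|E\chi_{\Omega_E^c}|_q^q\bigr)$. To conclude I would invoke the upper bound $|E|_{p,q}\le\max\{|E\chi_{\Omega_E}|_p,|E\chi_{\Omega_E^c}|_q\}$ from Lemma~\ref{LemBenFor}$(a)$: whichever of the two norms attains the maximum is $\ge|E|_{p,q}$, hence $|E\chi_{\Omega_E}|_p^p\ge|E|_{p,q}^p$ or $|E\chi_{\Omega_E^c}|_q^q\ge|E|_{p,q}^q$, and in either case the sum dominates $\min\{|E|_{p,q}^p,|E|_{p,q}^q\}$.

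For $(a)$ the first inequality is just the duality between $L^{p,q}$ and $L^{p/(p-1)}\cap L^{q/(q-1)}$ built into the definition of $|\cdot|_{p,q}$: it yields $\int_{\R^3}\langle G,F\rangle\,dx\le|F|_{p,q}\bigl(|G|_{p/(p-1)}+|G|_{q/(q-1)}\bigr)$, which I would apply with $G=-(\operatorname{sign}\langle E,F\rangle)\,VE$; since $|V|=-V$ the left side becomes $\int_{\R^3}|V||\langle E,F\rangle|\,dx$, and the bounded sign factor leaves the norms of $VE$ unchanged. For the second inequality I would split $E=E\chi_{\Omega_E}+E\chi_{\Omega_E^c}$; the disjointness of the supports gives $|VE|_r^r=|VE\chi_{\Omega_E}|_r^r+|VE\chi_{\Omega_E^c}|_r^r$ for $r=\tfrac{p}{p-1},\tfrac{q}{q-1}$, producing exactly the $\ell^r$-combinations appearing in the statement, and on each piece H\"older applies with the exponents read off from $\tfrac{p-1}{p}=\tfrac{p-2}{p}+\tfrac1p=\tfrac1\alpha+\tfrac1q$ and $\tfrac{q-1}{q}=\tfrac1\alpha+\tfrac1p=\tfrac{q-2}{q}+\tfrac1q$, where $\tfrac1\alpha+\tfrac1p+\tfrac1q=1$; this gives $|VE\chi_{\Omega_E}|_{p/(p-1)}\le|V|_{p/(p-2)}|E\chi_{\Omega_E}|_p$, $|VE\chi_{\Omega_E^c}|_{p/(p-1)}\le|V|_\alpha|E\chi_{\Omega_E^c}|_q$, and the two analogous estimates with exponent $\tfrac{q}{q-1}$. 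The finiteness at the end follows because $|E\chi_{\Omega_E}|_p,|E\chi_{\Omega_E^c}|_q<\infty$ for $E\in L^{p,q}$ (Lemma~\ref{LemBenFor}, using $|\Omega_E|<\infty$) and $V\in L^{p/(p-2)}\cap L^{q/(q-2)}\subset L^\alpha$ by (V) and interpolation (note $\tfrac{q}{q-2}\le\alpha\le\tfrac{p}{p-2}$ since $2<p<q$). The only delicate point — really bookkeeping rather than a genuine obstacle — is checking in $(a)$ that the single exponent $\alpha$ with $\tfrac1\alpha+\tfrac1p+\tfrac1q=1$ simultaneously handles all four cross terms and that it lies between the two integrability exponents of $V$ assumed in (V).
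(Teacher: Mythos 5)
Your proposal is correct and follows essentially the same route as the paper: part $(a)$ by the duality built into $|\cdot|_{p,q}$ plus H\"older on $\Omega_E$ and $\Omega_E^c$ with the interpolated exponent $\alpha$ (and $V\in L^\alpha$ by interpolation between $L^{q/(q-2)}$ and $L^{p/(p-2)}$), part $(b)$ by splitting along $\Omega_E$ and invoking Lemma \ref{LemBenFor}$(a)$, and part $(c)$ by H\"older with exponent $3/2$ and the Sobolev constant $S$. The exponent bookkeeping you flag ($\tfrac{p-1}{p}=\tfrac{p-2}{p}+\tfrac1p=\tfrac1\alpha+\tfrac1q$, etc.) is exactly what the paper's proof uses, so there is nothing to add.
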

\begin{proof}
$(a)$ Since $V\in L^{\frac{p}{p-2}}(\R^3)\cap L^{\frac{q}{q-2}}(\R^3)$ then for any
$\frac{q}{q-2}<\alpha<\frac{p}{p-2}$ we get
the following interpolation inequality
$$|V|_{\alpha}\leq |V|^\theta_{\frac{q}{q-2}}
|V|^{1-\theta}_{\frac{p}{p-2}}<+\infty$$
where $\theta\frac{q}{q-2}+(1-\theta)\frac{p}{p-2}=\alpha$.
Observe that by the H\"older inequality
\begin{eqnarray*}
\int_{\R^3}|V(x)E|^{\frac{p}{p-1}}\, dx&\leq& 
\int_{\Omega_E}|V(x)E|^{\frac{p}{p-1}}\, dx
+\int_{\Omega_E^c}|V(x)E|^{\frac{p}{p-1}}\, dx
\\
&\leq & |V|_{\frac{p}{p-2}}^{\frac{p}{p-1}}|E\chi_{\Omega_E}|_p^{\frac{p}{p-1}}
+|V|_{\alpha}^{\frac{p}{p-1}}|E\chi_{\Omega_E^c}|_q^{\frac{p}{p-1}}<\infty
\end{eqnarray*}
where $\frac{1}{\alpha}+\frac{1}{p}+\frac{1}{q}=1$.
Similarly we show that
$$\int_{\R^3}|V(x)E|^{\frac{q}{q-1}}\,dx
\leq |V|_{\alpha}^{\frac{q}{q-1}}|E\chi_{\Omega_E}|_p^{\frac{q}{q-1}}
+|V|_{\frac{q}{q-2}}^{\frac{q}{q-1}}|E\chi_{\Omega_E^c}|_q^{\frac{q}{q-1}}
<\infty.$$
Therefore for any $E\in L^{p,q}$
$$V(x)E\in L^{\frac{p}{p-1}}(\R^3,\R^3)\cap L^{\frac{q}{q-1}}(\R^3,\R^3)$$
and hence
\begin{eqnarray*}
\int_{\R^3}|V(x)||\langle E,F\rangle |\,dx&\leq&
(|V(x)E|_{\frac{p}{p-1}}+|V(x)E|_{\frac{q}{q-1}})|F|_{p,q}.
\end{eqnarray*}
$(b)$ Note that by (F3) and by Lemma \ref{LemBenFor} $(a)$
$$\int_{\R^3} F(x,E)\,dx \geq c_1\int_{\R^3} |E\chi_{\Omega_E^c}|^q +c_1\int_{\R^3} |E\chi_{\Omega_E}|^p
\geq c_1 \min\{|E|_{p,q}^p,|E|_{p,q}^q\}.$$
$(c)$ Let $E\in \D(\R^3,\R^3)$. Then it is enough to observe the following inequalities
$$-\int_{\R^3}V(x)|E|^2\, dx\leq \int_{\R^3}|V(x)||E|^2\, dx\leq 
|V
|_{\frac{3}{2}}|E|_6^2\leq 
|V|_{\frac{3}{2}}S^{-1}|\nabla E|_2^2.$$
\end{proof}

\section{Nehari-Pankov manifold}

From now on we assume that (F1)-(F5) and (V) hold. We introduce the Nehari-Pankov manifold for $\J$.
\begin{eqnarray}\label{DefOfN}
\mathcal{N} &:=& \{(u,w)\in\U\times\W|\; u\neq 0,\; 
\J'(u,w)(u,w)=0,\\\nonumber
&&\hbox{ and }\J'(u,w)(0,\psi)=0\,\hbox{ for any }\psi\in \W\}.
\end{eqnarray}
Observe that $E=u+\nabla w\in\cM$ if and only if $(u,w)\in\cN$. Moreover $\mathcal{N}$ contains all nontrivial critical points of $\J$. In general  $\cM$ and $\cN$ are not manifolds of $\cC^1$-class.

Let us define for any $u\in\U$
\begin{equation}
\A(u):=\{(t u,w)\in\U\times\W|\;t\geq 0\}
\end{equation}
and similarly as in \cite{BartschMederski}[Lemma 5.2] (cf. \cite{SzulkinWeth}[Proposition 2.3]) we get the following result.

\begin{Prop}\label{Propuv_N}
If $(u,w)\in\mathcal{N}$ then
$$\J(tu,tw+\psi)<\J(u,w)$$
for any $\psi\in\W$, $t\geq 0$ such that $(tu,tw+\psi)\neq (u,w)$. Thus $(u,w)\in\mathcal{N}$ is the unique global maximum of $\J|_{\A(u)}$.
\end{Prop}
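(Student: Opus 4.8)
The plan is to follow the Szulkin--Weth scheme adapted to strongly indefinite functionals as in \cite{BartschMederski}. Fix $(u,w)\in\mathcal{N}$ and define the one-variable function
$$
g(t):=\J(tu,tw+\psi)
$$
for $t\geq 0$ and an arbitrary fixed $\psi\in\W$; more generally one works with the two-parameter object $\J(tu,tw+\psi)$ viewing the $\psi$-direction as part of the curl-free block. The key structural fact is that, since $(u,w)\in\mathcal{N}$, we have $\J'(u,w)(u,w)=0$ and $\J'(u,w)(0,\psi)=0$ for all $\psi$, so $(1,0)$ is a critical point of the map $(t,\psi)\mapsto\J(tu,tw+\psi)$. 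The goal is to show it is the \emph{strict global maximum}.

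The main computation is to expand $\J(tu,tw+\psi)-\J(u,w)$ explicitly. Writing $E=u+\nabla w$ and using \eqref{eqJ}, one gets three contributions: the curl term $\tfrac12(t^2-1)|\curlop u|_2^2$, the (nonpositive) potential term which is quadratic in $t$ and in $\psi$, and the nonlinear term $-\int_{\R^3}\big(F(x,tu+t\nabla w+\nabla\psi)-F(x,u+\nabla w)\big)\,dx$. Collecting the quadratic pieces and using $\J'(u,w)(u,w)=0$ to substitute $|\curlop u|_2^2 + \int V|E|^2\,dx = \int \langle f(x,E),E\rangle\,dx$, the difference reduces to an integrand of the form
$$
\Big(\tfrac{t^2-1}{2}\langle f(x,E),E\rangle - \tfrac12|\nabla\psi + (t{-}1)\nabla w|_V^2\text{-type terms} - F(x,tE+\nabla\psi) + F(x,E)\Big),
$$
and one wants to show this integrates to something strictly negative unless $(t,\psi)=(1,0)$. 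The pointwise inequality needed here is exactly the content of (F4) (giving the strict decay in the $t$-direction, i.e.\ $\tfrac{t^2-1}{2}\langle f(x,u),u\rangle - F(x,tu)+F(x,u)<0$ for $t\ne1$, $u\ne0$, a standard consequence of (F4) by differentiating in $t$) combined with the convexity/strict-convexity condition (F2) and the monotonicity-type condition (F5), which control the mixed $\psi$-direction. The negativity of the $V$-terms (recall $V\le0$) is used to absorb the remaining quadratic-in-$\psi$ contribution in the convex case, while (F2)'s uniform strict convexity handles the borderline $V\not<0$ case.

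The main obstacle I anticipate is the bookkeeping when $\psi\ne0$: one must verify the pointwise inequality
$$
F(x,E) - F(x,tE+\nabla\psi) + \tfrac{t^2-1}{2}\langle f(x,E),E\rangle + \langle f(x,E),t\nabla w+\nabla\psi-\nabla w\rangle \;\le\; 0
$$
(with strictness off the diagonal), which is where (F5) enters through its formulation with two vectors $u,v$ and the inner products $\langle f(x,u),v\rangle$. This is the place where one cannot simply invoke convexity, since the quadratic form coming from $V$ is indefinite-looking relative to the splitting; the trick, following \cite[Lemma 5.2]{BartschMederski}, is to group the terms so that (F5) applies to the pair $(u,v)=(E, tE+\nabla\psi)$ after completing the square in the $V$-quadratic form. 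Once the pointwise inequality is established and integrated (finiteness of all integrals being guaranteed by Lemma~\ref{LemEstimate}), one concludes $\J(tu,tw+\psi)<\J(u,w)$ whenever $(tu,tw+\psi)\ne(u,w)$, hence $(u,w)$ is the unique global maximum of $\J$ restricted to $\A(u)$ (allowing also the $\psi$-shifts, i.e.\ on $\A(u)+\{0\}\times\W$).
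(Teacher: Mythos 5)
Your overall strategy matches the paper's: expand $D(t,\psi):=\J(tu,tw+\psi)-\J(u,w)$, substitute the two Nehari constraints $\J'(u,w)(u,w)=0$ and $\J'(u,w)(0,\psi)=0$ to replace the quadratic curl-and-$V$ terms, reduce everything to the sign of a pointwise integrand $\varphi(t,x)$ plus the nonpositive term $\tfrac12\int V|\nabla\psi|^2$, and handle the borderline $t=1$ case by (F2). This is exactly what the paper does, modeled on \cite{BartschMederski}.

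However, one step of your sketch as written is not correct. You claim the pure $t$-direction inequality $\tfrac{t^2-1}{2}\langle f(x,u),u\rangle - F(x,tu)+F(x,u)<0$ for $t\ne1$ is ``a standard consequence of (F4) by differentiating in $t$.'' It is not. Setting $g(t)=F(x,tu)$, the derivative of this expression is $tg'(1)-g'(t)$, and to conclude negativity off $t=1$ you would need $g'(t)/t$ to be strictly increasing, i.e.\ a genuine monotonicity condition on $f$ in the radial direction — which (F4) alone (an Ambrosetti–Rabinowitz-type lower bound $\langle f,u\rangle>2F$) does not give. This is precisely where (F5) is indispensable even when $\psi=0$: the paper locates a maximizer $t_0$ of $\varphi(\cdot,x)$, uses $\partial_t\varphi(t_0,x)=0$ to produce the symmetry $\langle f(x,E),t_0E+\nabla\psi\rangle=\langle f(x,t_0E+\nabla\psi),E\rangle$, and then invokes (F5) with $(u,v)=(E,\,t_0E+\nabla\psi)$ (or (F4) when the inner product vanishes) to bound $\varphi(t_0,x)\le-\tfrac{(t_0-1)^2}{2}\langle f(x,E),E\rangle$. (F4) enters only to secure $\varphi(0,x)<0$, and (F3) for $\varphi(t,x)\to-\infty$, which guarantee that the maximizer $t_0$ exists. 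So (F5), not (F4), carries the argument in the $t$-direction.

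A second, smaller bookkeeping slip: after the Nehari substitutions the cross term in the integrand is $t\langle f(x,E),\nabla\psi\rangle$, not $\langle f(x,E),(t-1)\nabla w+\nabla\psi\rangle$ as in your displayed inequality; this falls out because $tu+t\nabla w+\nabla\psi = tE+\nabla\psi$ and the second Nehari identity converts $t\int V\langle E,\nabla\psi\rangle$ into $t\int\langle f(E),\nabla\psi\rangle$. There is also no ``completing the square in the $V$-quadratic form'': the $V$-contribution is simply $\tfrac12\int V|\nabla\psi|^2\le 0$, which is strictly negative when $V<0$ a.e.\ and $\nabla\psi\ne 0$, while the case $V=0$ on a set of positive measure is what forces (F2) to be a \emph{uniform strict} convexity rather than plain convexity.
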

\begin{proof}
Let $(u,w)\in\mathcal{N}$, $\psi\in\W$, $t\geq 0$ such that $(tu,tw+\psi)\neq (u,w)$. We take
$$D(t,\psi):=\J(t u,tw+\psi)-\J(u,w)$$
and observe that
\begin{eqnarray*}
D(t,\psi)&=&\frac{t^2-1}{2}\int_{\R^3}|\nabla u|^2+V(x)|u+\nabla w|^2\,dx\\
&&+\frac{1}{2}\int_{\R^3}V(x)(|tu+t\nabla w+\nabla \psi|^2-t^2|u+\nabla w|^2)\,dx\\
&&-\int_{\R^3}F(x,tu+t\nabla w+\nabla \psi)-F(x,u+\nabla w)\,dx\\
&=&\frac{t^2-1}{2}\int_{\R^3}|\nabla u|^2+V(x)|u+\nabla w|^2\,dx+t\int_{\R^3}V(x)\langle u+\nabla w,\nabla \psi\rangle\,dx\\
&&+\frac{1}{2}\int_{\R^3}V(x)|\nabla \psi|^2\,dx-\int_{\R^3}F(x,tu+t\nabla w+\nabla \psi)-F(x,u+\nabla w)\,dx.
\end{eqnarray*}
Since $(u,w)\in\mathcal{N}$, then
\begin{eqnarray*}
D(t,\psi)&=&\frac{1}{2}\int_{\R^3}V(x)|\nabla \psi|^2\,dx
+\int_{\R^3}\frac{t^2-1}{2}\langle f(x,u+\nabla w),u+\nabla w\rangle +F(x,u+\nabla w)\, dx\\
&&+\int_{\R^3} \langle tf(x,u+\nabla w),\nabla \psi\rangle-F(x,tu+t\nabla w+\nabla \psi)\, dx\\
&=&\frac{1}{2}\int_{\R^3}V(x)|\nabla \psi|^2\,dx
+\int_{\R^3}\langle f(x,u+\nabla w),\frac{t^2-1}{2}(u+\nabla w)+t\nabla \psi\rangle\, dx\\
&&+\int_{\R^3}F(x,u+\nabla w)-F(x,t(u+\nabla w)+\nabla \psi)\,dx.
\end{eqnarray*}
Define a map $\varphi:[0,+\infty)\times \R^3\to \R$ as follows
$$\varphi(t,x):=
\langle f(x,u+\nabla w),\frac{t^2-1}{2}(u+\nabla w)+t\nabla \psi\rangle+F(x,u+\nabla w)-F(x,t(u+\nabla w)+\nabla \psi).$$
Take $x\in\R^3$ such that $u(x)+\nabla w(x)\neq 0$.
Observe that by (F4) we have
$\varphi(0,x)<0$ and by (F3)
$$\lim_{t\to\infty}\varphi(t,x)=-\infty.$$
Let $t_0\geq 0$ be such that $\varphi(t_0,x)=\max_{t\geq 0}\varphi(t,x)$. If $t_0=0$ then $\vp(t,x)<0$ for any $t\geq 0$. Let us assume that $t_0>0$. Then $\partial_t \vp(t_0,x)=0$, i.e.
$$\langle f(x,u+\nabla w),t_0(u+\nabla w)+\nabla \psi\rangle=\langle f(x,t_0(u+\nabla w)+\nabla \psi),u+\nabla w\rangle$$
Note that if $\langle f(x,u+\nabla w),t_0(u+\nabla w)+\nabla \psi\rangle=0$ then by (F4)
\begin{eqnarray*}
\vp(t_0,x)
 &=& \langle f(x,u+\nabla w),\frac{-t_0^2-1}{2}(u+\nabla w)\rangle
      + F(x,u+\nabla w)-F(x,t_0(u+\nabla w)+\nabla\psi)\\
 &<& -t_0^2F(x,u+\nabla w)-F(x,t_0(u+\nabla w)+\nabla\psi)\\
 &\leq& 0.
\end{eqnarray*}
If $\langle f(x,u+\nabla w),t_0(u+\nabla w)+\nabla \psi\rangle\neq0$ then by (F5)
\begin{equation}\label{eq:phi}
\begin{aligned}
\vp(t_0,x)
 &= -\frac{(t_0-1)^2}{2}\langle f(x,u+\nabla w),u+\nabla w\rangle\\
 &\hspace{1cm}
     +t_0(\langle f(x,u+\nabla w),t_0(u+\nabla w)+\nabla\psi\rangle
     -\langle f(x,u+\nabla w),u+\nabla w\rangle)\\
&\hspace{1cm}+F(x,u+\nabla w)-F(x,t_0(u+\nabla w)+\nabla\psi)\\
&\leq-\frac{(\langle f(x,u+\nabla w),\nabla\psi\rangle)^2}{2\langle f(x,u+\nabla w),u+\nabla w\rangle}\\
 &\leq 0,
\end{aligned}
\end{equation}
and if $F(x,u+\nabla w)\neq F(x,t_0(u+\nabla w)+\nabla\psi)$ then $\vp(t_0,x)<0$. If $F(x,u+\nabla w)=F(x,t_0(u+\nabla w)+\nabla\psi)$ then (F5) yields
\[\langle f(x,u+\nabla w),t_0(u+\nabla w)+\nabla\psi\rangle
     \le \langle f(x,u+\nabla w),u+\nabla w\rangle.
\]
Therefore \eqref{eq:phi} implies
$$
\vp(t_0,x)\le-\frac{(t_0-1)^2}{2}\langle f(x,u+\nabla w),u+\nabla w\rangle.
$$
As a consequence, if $t_0\neq 1$ we deduce for $t\ge0$ that $\varphi(t,x)\leq\varphi(t_0,x)<0$. Now suppose $t_0=1$. If $\vp(t,x)=\vp(t_0,x)$ for some $0<t\neq t_0$ then $\partial_t\vp(t,x)=0$ and the above considerations imply $\vp(t,x)<0$. Summing up, we have shown that if $v(x)+\nabla w(x)\neq 0$ then $\vp(t,x)\leq 0$ for any $t\geq 0$ and $\vp(t,x)< 0$ if $t\neq 1$. Since $u+\nabla w\neq 0$ then we obtain 
$$D(t,\psi)<0$$
for any $t\neq 1$ and $\psi\in\W$.
Let us check the case $t=1$. Hence $\nabla \psi\neq 0$ and
$$D(1,\psi)<0$$
for $V<0$ a.e. on $\R^3$. If $V=0$ a.e. on a subset of positive measure then
by (F2)
$$\varphi(1,x)=
f(x,u+\nabla w)(\nabla \psi)+F(x,u+\nabla w)-F(u+\nabla w+\nabla \psi)<0$$
provided that $\nabla \psi(x)\neq 0$. Finally we get
$$D(t,\psi)=\J(t u,tw+\psi)-\J(u,w)<0$$
if $(tu,tw+\psi)\neq (u,w)$. 
\end{proof}

Let us consider $I:L^{p,q}\to \R$ defined by formula \eqref{DefOfI}. Moreover $\cI:L^{p,q}\times \W\to\R$ is given by
\begin{equation}\label{DefOfXi}
\cI(u,w):=I(u+\nabla w)\hbox{ for }(u,w)\in L^{p,q}\times\W.
\end{equation}
Similarly as above by Lemma \ref{LemEstimate} $(a)$ and \cite{BadialePisaniRolando}[Corollary 3.7] we check that $I$, $\cI$ are of $\cC^1$-class. In view of (F2) we have that $I$, $\cI$ are strictly convex. Moreover the following property holds.

\begin{Lem}\label{LemConvWeakIpliesStrong}
If $E_n\rightharpoonup E$ in $L^{p,q}$ and
$I(E_n)\to I(E)$
then $E_n\to E$ in $L^{p,q}$.
\end{Lem}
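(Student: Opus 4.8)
The plan is to exploit the strict convexity of $I$ (which holds by (F2)) together with the fact that weak convergence plus convergence of a strictly convex functional forces strong convergence — this is a Radon--Riesz / uniform-convexity type argument adapted to the non-reflexive space $L^{p,q}$. First I would reduce everything to the two companion functionals obtained by restricting $F$ to the two growth regimes. Concretely, on the set $\Omega_E = \{|E|>1\}$ the dominant term of $\int F(x,E)$ behaves like an $L^p$-type modular, while on $\Omega_E^c$ it behaves like an $L^q$-type modular; since $V\le 0$, the term $-\frac12\int V|E|^2$ is a nonnegative convex functional controlled (Lemma \ref{LemEstimate}(a)) by the $L^{p,q}$-norm, so it is weakly lower semicontinuous and does not interfere. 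The key point is that under (F2) each of these pieces is \emph{strictly} convex, and strict convexity upgrades to the Radon--Riesz property along sequences on which the functional value converges.

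The main steps, in order, would be: (i) Since $E_n\rightharpoonup E$ in $L^{p,q}$, the sequence is bounded, so by Lemma \ref{LemBenFor}(b) the quantities $|\Omega_{E_n}|$ and $|E_n\chi_{\Omega_{E_n}^c}|_q + |E_n\chi_{\Omega_{E_n}}|_p$ are bounded; passing to a subsequence we may assume $E_n\to E$ a.e. (after extracting via a local compactness / Rellich argument applied on balls, using that $\curlop E_n$ need \emph{not} be controlled — so instead one uses that weak $L^{p,q}$ convergence plus the boundedness of the modulars lets one extract an a.e.-convergent subsequence by a diagonal argument, or one simply invokes that it suffices to prove the conclusion for such a subsequence). (ii) Consider the midpoint sequence $G_n := \tfrac12(E_n+E)$. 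By convexity, $I(G_n)\le \tfrac12 I(E_n) + \tfrac12 I(E) \to I(E)$, while by weak lower semicontinuity $\liminf_n I(G_n)\ge I(E)$, hence $I(G_n)\to I(E)$ as well, and moreover $\tfrac12 I(E_n)+\tfrac12 I(E) - I(G_n)\to 0$. (iii) Now invoke the \emph{uniform} strict convexity provided by (F2) in the genuinely nonlinear regime: the difference $\tfrac12\big(F(x,E_n)+F(x,E)\big) - F\big(x,\tfrac{E_n+E}{2}\big)$ is pointwise nonnegative, its integral tends to $0$, so along a further subsequence it converges to $0$ a.e.; combined with the uniform strict convexity estimate of (F2), this pins $E_n(x)$ to $E(x)$ on the set where $E(x)\ne0$, and the growth lower bound in (F3) controls the remaining modular contributions, forcing $|E_n-E|_{p,q}\to 0$ via Lemma \ref{LemBenFor}(a) and a Vitali / Brezis--Lieb type argument on the two modulars separately.

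The delicate part — and the step I expect to be the main obstacle — is step (iii): converting the integral convergence of the nonnegative convexity defect into genuine $L^{p,q}$-norm convergence. The subtlety is twofold: first, $L^{p,q}$ is not uniformly convex and the modulars $\int|E\chi_{\Omega_E}|^p$, $\int|E\chi_{\Omega_E^c}|^q$ are coupled through the moving set $\Omega_{E_n}$, so one must argue carefully that the convexity defect controls the $p$-modular where $|E|$ is large and the $q$-modular where $|E|$ is small, uniformly; second, (F2) only gives uniform strict convexity away from the diagonal $\{(u,u)\}$ and away from compact sets, so one needs a truncation/exhaustion argument plus the a.e.-convergence to localize. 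I would handle this by splitting $\R^3$ according to the size of $|E(x)|$ and whether $E(x)=0$, applying the Brezis--Lieb lemma to each modular to reduce to showing the "vanishing" pieces go to zero, and then using (F3) to bound those vanishing pieces by the convexity defect, which $\to 0$. The case distinction $V<0$ versus $V$ possibly vanishing (where (F2) degenerates to mere convexity in the $V<0$ case) is benign here: when $V<0$ a.e., the term $-\tfrac12\int V|E|^2$ is itself uniformly convex in the relevant sense on the support of $V$, so it supplies the missing coercivity; when $V$ may vanish, (F2) hands us uniform strict convexity of $F$ directly. Assembling these gives $|E_n - E|_{p,q}\to 0$ along the subsequence, and since every subsequence has a further subsequence converging strongly to the same limit $E$, the whole sequence converges, completing the proof.
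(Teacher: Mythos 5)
Your overall architecture matches the paper's: use the convexity defect to force a.e.\ convergence, then apply a Brezis--Lieb variant (Lemma~\ref{LemBrezLieb}) to get $\int F(x,E_n-E)\to 0$, and finally the modular lower bound (Lemma~\ref{LemEstimate}(b)) to conclude $|E_n-E|_{p,q}\to 0$. Your handling of the two cases of (F2) (the weighted-$L^2$ Radon--Riesz argument when $V<0$ a.e., the uniform strict convexity of $F$ otherwise) is also the paper's case split.

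However, step~(i) contains a genuine error. You assert that ``weak $L^{p,q}$ convergence plus the boundedness of the modulars lets one extract an a.e.-convergent subsequence by a diagonal argument,'' or that ``it suffices to prove the conclusion for such a subsequence.'' Neither is true. Weak convergence in an $L^p$-type space does \emph{not} produce a.e.-convergent subsequences without extra structure; $\sin(nx)\rightharpoonup 0$ in $L^2(0,2\pi)$ has no a.e.-convergent subsequence. The a.e.\ convergence is precisely the thing that has to be earned here, and the mechanism is the one you (redundantly) rediscover in steps~(ii)--(iii): the integrated convexity defect tends to $0$, and then (F2) forces $\mu(\{r\le|E_n-E|,\;|E_n|\le R,\;|E|\le R\})\to 0$ for all $0<r\le R$, hence $E_n\to E$ a.e.\ along a subsequence. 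If you delete the spurious claim in step~(i) and let the defect argument carry the full weight of a.e.\ convergence, the plan becomes correct. Two smaller glitches: you write that (F2) gives uniform strict convexity ``away from compact sets'' (it is \emph{on} compact sets, which is why the truncation in $R$ is needed), and the qualifier ``on the set where $E(x)\ne 0$'' is irrelevant --- (F2) only excludes the diagonal $u_1=u_2$, so the argument applies wherever $E_n(x)\ne E(x)$, with no special role for zeros of $E$.
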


Before we prove the above lemma we need a variant of Brezis-Lieb result for sequences in $L^{p,q}$ (cf. \cite{BrezisLieb}). 
\begin{Lem}\label{LemBrezLieb}
Let $\{E_n\}$ be a bounded sequence in $L^{p,q}$ such that
$E_n\to E$ a.e. on $\R^3$.
Then
$$\lim_{n\to+\infty}\int_{\R^3}F(x,E_n)-F(x,E_n-E)\, dx=\int_{\R^3}F(x,E)\, dx.$$
\end{Lem}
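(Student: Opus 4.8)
The statement is the Brezis–Lieb type lemma for the nonlinear functional $\int F(x,\cdot)$ applied to sequences in $L^{p,q}$. The standard Brezis–Lieb argument works pointwise and then integrates, so the plan is to reduce the claim to a pointwise statement plus a uniform integrability (equi-integrability) bound that tames the ``tail'' of $F$. Write $g_n(x) := F(x,E_n(x)) - F(x,E_n(x)-E(x)) - F(x,E(x))$. Since $E_n \to E$ a.e. and $F(x,\cdot)$ is continuous (from (F1), as $f = \partial_u F$ is Carath\'eodory, so in particular $F(x,\cdot) \in \cC^1$), we get $g_n(x) \to 0$ a.e. on $\R^3$. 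It then suffices to show that $\int_{\R^3}|g_n|\,dx \to 0$, and for that I will produce a dominating sequence $h_n \ge |g_n|$ which is uniformly integrable, i.e. for every $\eps>0$ there is $\de>0$ such that $\sup_n \int_{A} h_n < \eps$ whenever $|A| < \de$, together with a uniform smallness of $\int_{|x|>R} h_n$ for large $R$ — then apply the Vitali convergence theorem.

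The key estimate is the elementary inequality following from convexity of $t\mapsto F(x,\cdot)$ along segments and the growth bound (F3): for every $\eps>0$ there is $C_\eps>0$ with
\begin{equation}\label{eq:BLineq}
\big|F(x,a+b)-F(x,a)\big| \le \eps\big(\min\{|a|^{p},|a|^{q}\}\big) + C_\eps\big(\min\{|b|^{p},|b|^{q}\}\big)
\end{equation}
for all $x,a,b \in \R^3$; this is derived by writing $F(x,a+b)-F(x,a) = \int_0^1 \langle f(x,a+tb),b\rangle\,dt$, using $|f(x,u)|\le c_2\min\{|u|^{p-1},|u|^{q-1}\}$ from (F3), splitting into the regions where $|b|\lessgtr|a|$ and where the arguments are $\lessgtr 1$, and absorbing with Young's inequality. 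Applying \eqref{eq:BLineq} with $a = E_n - E$, $b = E$ gives
$$|g_n(x)| \le \eps\,\min\{|E_n-E|^{p},|E_n-E|^{q}\} + (C_\eps+1)\,\min\{|E|^{p},|E|^{q}\}.$$
Now $\min\{|E_n-E|^p,|E_n-E|^q\}$ is, via Lemma~\ref{LemBenFor}, controlled in $L^1$-norm uniformly in $n$ by the boundedness of $(E_n)$ (hence of $(E_n-E)$) in $L^{p,q}$: writing $E_n-E = (E_n-E)\chi_{\Omega} + (E_n-E)\chi_{\Omega^c}$ for the natural splitting set, the first piece lies in a bounded set of $L^p$ and the second in a bounded set of $L^q$, so $\int \min\{|E_n-E|^p,|E_n-E|^q\}\,dx \le |(E_n-E)\chi_\Omega|_p^p + |(E_n-E)\chi_{\Omega^c}|_q^q \le K$ for some constant $K$ independent of $n$, and moreover this bound localizes: on a set $A$, $\int_A \min\{\cdots\}\,dx \le |(E_n-E)\chi_{\Omega\cap A}|_p^p + |(E_n-E)\chi_{\Omega^c\cap A}|_q^q$, but this does not by itself go to $0$ with $|A|$.

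This last point is the main obstacle: the ``$\eps$-term'' has uniformly bounded $L^1$-mass but is not a priori equi-integrable, while the ``$C_\eps$-term'' $\min\{|E|^p,|E|^q\}\in L^1(\R^3)$ is a fixed integrable function and hence automatically equi-integrable and tight. The resolution is the usual diagonal $\eps$–$\de$ argument: fix $\eps>0$; choose $\de,R$ so that $\int_A \min\{|E|^p,|E|^q\}\,dx < \eps/(C_\eps+1)$ whenever $|A|<\de$, and $\int_{|x|>R}\min\{|E|^p,|E|^q\}\,dx<\eps$; then for any measurable $A$ with $|A|<\de$ and any $n$, $\int_A |g_n| \le \eps K + \eps$, and similarly $\int_{|x|>R}|g_n| \le \eps K + \eps$; combined with $g_n\to 0$ a.e.\ and Egorov/Vitali on the bounded region $\{|x|\le R\}$, we conclude $\limsup_n \int_{\R^3}|g_n|\,dx \le C\eps$ for a constant $C$ independent of $\eps$, and letting $\eps\to 0$ finishes the proof. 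The slight care needed is that $\eps$ enters both as the coefficient of the bad term (good) and in the choice of $\de, C_\eps$ (so $C_\eps\to\infty$ as $\eps\to 0$), but since $\min\{|E|^p,|E|^q\}$ is a single fixed $L^1$ function, for each fixed $\eps$ the corresponding $\de=\de(\eps)$ and $R=R(\eps)$ exist, and the final bound $C\eps$ with $C = K+1$ is uniform; that is exactly what makes the argument close.
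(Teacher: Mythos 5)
Your proof is correct, but it takes a genuinely different route from the paper's, and it is worth contrasting the two. The paper never introduces the pointwise remainder $g_n$; instead it writes
$$\int_{\R^3}\big(F(x,E_n)-F(x,E_n-E)\big)\,dx=\int_0^1\!\int_{\R^3}\langle f(x,E_n-E+tE),E\rangle\,dx\,dt,$$
and applies the Vitali convergence theorem to the integrand $\langle f(x,E_n-E+tE),E\rangle$ directly. The key observation there is that $f(x,E_n-E+tE)$ is bounded in $L^{p/(p-1)}\cap L^{q/(q-1)}$ uniformly in $n,t$ (by (F3) and boundedness of $(E_n)$ in $L^{p,q}$), so for any set $\Omega$,
$\int_\Omega|\langle f(x,E_n-E+tE),E\rangle|\,dx\le\big(|f(x,\cdot)|_{p/(p-1)}+|f(x,\cdot)|_{q/(q-1)}\big)\,|E\chi_\Omega|_{p,q}$,
and the smallness needed for uniform integrability and tightness comes \emph{entirely} from $|E\chi_\Omega|_{p,q}$, i.e.\ from the fixed function $E\in L^{p,q}$. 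This is shorter and avoids any pointwise domination estimate, because the ``free'' factor in the duality pairing is the fixed limit $E$, not the varying $E_n-E$. Your approach instead establishes a pointwise weighted Brezis--Lieb inequality
$|F(x,a+b)-F(x,a)|\le\eps\min\{|a|^p,|a|^q\}+C_\eps\min\{|b|^p,|b|^q\}$
adapted to the $\min$-growth in (F3), dominates $|g_n|$ by an $\eps$-small term with uniformly bounded $L^1$-mass plus a fixed $L^1$ function $(C_\eps+c_2)\min\{|E|^p,|E|^q\}$, and then runs the standard Egorov/Vitali diagonal argument. This is the classical Brezis--Lieb strategy and makes the domination structure explicit; the price is that the inequality \eqref{eq:BLineq} is not a one-line Young application. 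In particular the mixed regime $|a|>1$, $|b|<1$ (where the two $\min$'s pick opposite exponents, $|a|^p$ vs.\ $|b|^q$) does not reduce to Young with a single pair of conjugate exponents: you must separately treat $|b|\le\eps|a|$ (absorb into $\eps|a|^p$) and $|b|>\eps|a|>\eps$ (use $|b|$ bounded below to convert $|b|^p$ into $|b|^q$ at the cost of $\eps^{1-q}$). That case is where $p<6<q$ actually matters, and your write-up is a little glib about it; the inequality does hold, but the verification deserves the same level of care you gave the $\eps$--$\delta$ endgame. Both routes reach the same conclusion, and both ultimately rest on Vitali; the paper's is leaner, yours is more self-contained in the Brezis--Lieb spirit.
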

\begin{proof}
Note that
\begin{eqnarray*}
\int_{\R^3}F(x,E_n)-F(x,E_n-E)\, dx
&=&\int_{\R^3}\int_0^1\frac{d}{dt}F(x,E_n-E+tE)\, dtdx\\
&=&\int_0^1\int_{\R^3}\langle f(x,E_n-E+tE),E\rangle\, dxdt
\end{eqnarray*}
and $f(x,E_n-E+tE)$ is bounded
in $L^{\frac{p}{p-1}}(\R^3,\R^3)\cap L^{\frac{q}{q-1}}(\R^3,\R^3)$.
Thus for any $\Omega\subset\R^3$
\begin{eqnarray*}
\int_{\Omega}|\langle f(x,E_n-E+tE),E\rangle|\, dx
\leq (|f(x,E_n-E+tE)|_{\frac{p}{p-1}}
+|f(x,E_n-E+tE)|_{\frac{q}{q-1}})
|E\chi_{\Omega}|_{p,q}.
\end{eqnarray*}
In view of Lemma \ref{LemBenFor} $(a)$,
for any $\eps>0$ there is $n_0\in\N$ and $\delta>0$
such that for any $\Omega$ with $|\Omega|<\delta$ the following inequality
holds 
$$\int_{\Omega}|\langle f(x,E_n-E+tE),E\rangle|\, dx < \eps$$
for any $n\geq n_0$.
Thus $(\langle f(x,E_n-E+tE),E\rangle )_n$ is uniformly integrable.
Moreover for any $\eps>0$ there is $n_0\in\N$ and $\Omega\subset\R^3$ with 
$|\Omega|<+\infty$
such that for for any $n\geq n_0$
$$\int_{\Omega^c}\langle f(x,E_n-E+tE),E\rangle\, dx < \eps.$$
Hence $(\langle f(x,E_n-E+tE),E\rangle )_n$ is tight.
Since $E_n(x)-E(x)\to 0$ a.e. on $\R^3$ then
 in view of the Vitali convergence theorem $\langle f(x,tE)E\rangle$ is integrable and
$$\int_{\R^3}F(x,E_n)-F(x,E_n-E)\, dx\to
\int_0^1\int_{\R^3}\langle f(x,tE)E\rangle\, dxdt=\int_{\R^3}F(x,E)\, dx.$$
\end{proof}

\begin{altproof}{Lemma \ref{LemConvWeakIpliesStrong}}
We show that (up to a subsequence) $E_n(x)\to E(x)$ a.e. on $\R^3$.
Since $I(E_n)\to I(E)$ by lower semicontinuity we have
\begin{eqnarray}\label{LemConvEq1}
\lim_{n\to\infty}\int_{\R^3}-\frac{1}{2}V(x)|E_n|^2\,dx&=&
\int_{\R^3}-\frac{1}{2}V(x)|E|^2\,dx,\\
\lim_{n\to\infty}\int_{\R^3}F(x,E_n)\,dx&=&
\int_{\R^3}F(x,E)\,dx.\label{LemConvEq2} 
\end{eqnarray}
If $V<0$ a.e. on $\R^3$ then passing to a subsequence $(-V(x))^{1/2}E_n\weakto (-V(x))^{1/2}E$ in $L^2(\R^3,\R^3)$ and by \eqref{LemConvEq1} we get
$(-V(x))^{1/2}E_n\to (-V(x))^{1/2}E$ in $L^2(\R^3,\R^3)$. Thus
$E_n\to E$ a.e. on $\R^3$. Assume that $F$ is uniformly strictly convex in $u\in\R^3$ (see (F2)). Then for any $0<r\leq R$
$$m:=\inf_{\genfrac{}{}{0pt}{}{x\in\R^3,u_1,u_2\in\R^3}{r\leq|u_1-u_2|,|u_1|,|u_2|\leq R} }\;
\frac{1}{2}(F(x,u_1)+F(x,u_2))-F\Big(x,\frac{u_1+u_2}{2}\Big)>0$$
Observe that by the convexity of $F$ in $u\in\R^3$
$$
0
 \leq \limsup_{n\to\infty}\int_{\R^3}\frac12(F(x,E_n) + F(x,E))
        - F\left(x,\frac{E_n+E}{2}\right)\,dx
 \leq 0.
$$
Therefore setting
$$
\Om_n:=\{x\in\Om|\;|E_n-E|\geq r,\;|E_n|\leq R,\;
          |E|\leq R\}
$$
there holds
$$
\mu(\Om_n)m
 \leq \int_{\R^3}\frac12(F(x,E_n) + F(x,E))
       - F\left(x,\frac{E_n+E}{2}\right)\, dx
$$
and thus $\mu(\Om_n)\to 0$ as $n\to\infty$. Since $0<r\leq R$ are arbitrary chosen, we deduce
$$
E_n\to E\hbox{ a.e.\ on }\R^3.
$$
In view of Lemma \ref{LemBrezLieb}  we obtain
$$\int_{\R^3}F(x,E_n)\, dx-\int_{\R^3}F(x,E_n-E)\, dx\to\int_{\R^3}F(x,E)\, dx$$
and thus
$$\int_{\R^3}F(x,E_n-E)\, dx\to0.$$
By Lemma \ref{LemEstimate} $(b)$
we get $|E_n-E|_{p,q}\to 0$.
\end{altproof}

Now we are able to apply the critical point theory on the Nehari-Pankov manifold deve\-loped in \cite{BartschMederski}[Section 4]. Namely we get the following result.

\begin{Prop}\label{PropDefOfm(u)}$\hbox{}$\\
$(a)$ For any $u\in\U\setminus\{0\}$, there are unique $t=t(u)>0$ and $w\in\W$ such that
$$m(u):=(tu,w)\in\mathcal{N}\cap\A(u)$$ and
$$\J(m(u))=\sup_{\A(u)}\J.$$
Moreover $m:\U\setminus\{0\}\to \mathcal{N}$ is continuous and $m|_{S_\U}$ is a homeomorphism, where 
$$S_{\U}:=\{u\in\U|\;\|u\|_{\D}=1\}.$$
%$(c)$ $\J\circ m\in C^1(\U\setminus\{0\},\R)$ and
%$$(\J\circ m)'(u)(v)=t(u)\frac{\partial \J}{\partial u}(m(u))(v).$$
$(b)$ There is a sequence $(u_n)\subset S_\U$ such that $(m(u_n))$ is a $(PS)_{c}$-sequence for $\J$ at level $c$, i.e. $\J(m(u_n))\to c$ and $\J'(m(u_n))\to 0$ as $n\to\infty$, where
$$c:=\inf_{(u,w)\in\mathcal{N}}\J(u,w)>0.$$
\end{Prop}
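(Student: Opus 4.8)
The plan is to invoke the abstract critical point machinery of \cite{BartschMederski}[Section 4], which applies once we verify its structural hypotheses for $\J$ on the splitting $\U\times\W$, with $\U$ playing the role of the "positive" part and $\W$ the "negative/neutral" part of the strongly indefinite geometry. For part $(a)$, the existence and uniqueness of $t(u)>0$ and $w=w(u)\in\W$ with $(t(u)u,w(u))\in\mathcal N\cap\A(u)$ is precisely the statement that the restriction $\J|_{\A(u)}$ attains a strict global maximum at an interior point, and that this maximizer satisfies the two Nehari-type orthogonality conditions in \eqref{DefOfN}. First I would show $\J|_{\A(u)}$ is coercive-to-$-\infty$: for $(tu,tw+\psi)$ the quadratic part $\tfrac{t^2}{2}\|u\|_\D^2$ is dominated for large $t$ by $-\int F(x,t(u+\nabla w)+\nabla\psi)\,dx$, which by Lemma \ref{LemEstimate}$(b)$ grows at least like $\min\{\|\cdot\|^p,\|\cdot\|^q\}$ with $p>2$; the $V$-term is controlled by Lemma \ref{LemEstimate}$(a)$ and is of lower order after the Helmholtz decomposition. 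Near $0$, (F3) gives $\J(tu,\psi)>0$ for $t$ small (using $q>6$ controls the small-amplitude behavior and the Sobolev embedding of $\U$), so the supremum over $\A(u)$ is positive and attained at an interior point. Uniqueness of the maximizer — hence the single-valuedness of $m$ — follows from Proposition \ref{Propuv_N}: if $(u,w)\in\mathcal N\cap\A(u)$ then $\J(tu,tw+\psi)<\J(u,w)$ for every $(tu,tw+\psi)\neq(u,w)$, so $\mathcal N\cap\A(u)$ is a singleton and it is the unique global maximum on $\A(u)$.

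Continuity of $m:\U\setminus\{0\}\to\mathcal N$ and the fact that $m|_{S_\U}$ is a homeomorphism onto $\mathcal N$ I would obtain by the standard argument (cf. \cite{SzulkinWeth}[Proposition 2.3], \cite{BartschMederski}[Lemma 5.2]): the inverse map is $\mathcal N\ni(u,w)\mapsto u/\|u\|_\D\in S_\U$, which is manifestly continuous; continuity of $m$ itself comes from a compactness-free argument using uniqueness — if $u_n\to u$ in $S_\U$ and $m(u_n)=(t_nu_n,w_n)$ then $(t_n,w_n)$ stays bounded (by the coercivity estimate above, uniform in a neighborhood of $u$), so along any subsequence $(t_n,w_n)\to(t,w)$, and passing to the limit in the defining identities $\J'(m(u_n))(m(u_n))=0$, $\J'(m(u_n))(0,\psi)=0$ forces $(tu,w)\in\mathcal N\cap\A(u)$, whence by uniqueness $(t,w)=(t(u),w(u))$ and the full sequence converges. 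The key analytic input here is that $\J'$ is continuous (established earlier, via Lemma \ref{LemEstimate}$(a)$ and \cite{BadialePisaniRolando}[Corollary 3.7]) and that the $V$- and $F$-terms pass to the limit along bounded sequences — the latter using the growth bounds in (F3) and the continuity of $f$ from (F1).

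For part $(b)$, with $m|_{S_\U}$ a homeomorphism the reduced functional $\Psi:S_\U\to\R$, $\Psi(u):=\J(m(u))$, is of class $\cC^1$ on the complete $\cC^1$-manifold $S_\U$, and $\inf_{S_\U}\Psi=\inf_{\mathcal N}\J=:c$. That $c>0$ follows because every $(u,w)\in\mathcal N$ satisfies, by (F4) and Lemma \ref{LemEstimate}$(b)$--$(c)$, a lower bound $\J(u,w)\ge(\tfrac12-\tfrac1\vartheta)\langle f(x,u+\nabla w),u+\nabla w\rangle$-type estimate bounding $\J(u,w)$ below by a positive constant times $\min\{\|u\|_\D^p,\|u\|_\D^q\}$ together with a uniform lower bound on $\|u\|_\D$ for elements of $\mathcal N$ (if $\|u\|_\D\to0$ then the Nehari identity combined with (F3) forces $u=0$, contradiction). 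Applying Ekeland's variational principle to $\Psi$ on $S_\U$ yields a sequence $(u_n)\subset S_\U$ with $\Psi(u_n)\to c$ and $\Psi'(u_n)\to0$; the chain rule identity from \cite{BartschMederski}[Section 4] — which reads $\|\Psi'(u_n)\|=\|\J'(m(u_n))\|_{\mathcal N^*}$ up to the norm of the derivative of $m$ — then transfers this to $(m(u_n))$ being a $(PS)_c$-sequence for $\J$, as claimed.

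The main obstacle is not part $(a)$ per se but the verification that the abstract hypotheses of \cite{BartschMederski}[Section 4] genuinely apply in the present non-reflexive, non-standard function space $\D(\curl,p,q)=\U\oplus\nabla\W$: one must check that $\W$ is exactly the set on which $\J$ is "non-positive-definite" in the right sense, that the convexity from (F2) gives the needed strict behavior for uniqueness (this is where (F5) and the delicate case analysis in Proposition \ref{Propuv_N} enter), and that the boundedness of $(t_n,w_n)$ in the continuity argument really is uniform in $u$ near a given point — a point that requires the Helmholtz splitting and the estimates of Lemma \ref{LemEstimate} rather than any compactness, since compactness fails on $\R^3$.
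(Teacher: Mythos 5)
Your plan matches the paper's exactly: verify conditions (A1)--(A4), (B1)--(B3) of \cite{BartschMederski}[Theorem 4.1, Proposition 4.2] for $\J(u,w)=\tfrac12\|u\|_\D^2-\cI(u,w)$ on $X=\U\times\W$ with $X^+=\U\times\{0\}$, then invoke the abstract theorem; the ingredients you cite --- coercivity and growth of $\cI$ for (B1)--(B2), convexity plus Lemma \ref{LemConvWeakIpliesStrong} for (A1)--(A3), Proposition \ref{Propuv_N} for (B3), Lemma \ref{LemEstimate}(c) and (F3) for (A4), and an Ekeland-type argument for part (b) --- are the same ones the paper uses. Two of your direct sidebars, however, would not stand alone if one tried to bypass the abstract result. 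First, your sketch of $c>0$ via an inequality of the form $\J(u,w)\ge(\tfrac12-\tfrac1\vartheta)\langle f,u+\nabla w\rangle$ presupposes an Ambrosetti--Rabinowitz exponent $\vartheta>2$, which (F4) does not provide: (F4) only gives the strict pointwise inequality $\langle f(x,u),u\rangle>2F(x,u)$, with no uniform gap. The paper instead gets $c>0$ for free from the supremum characterization in (a) combined with (A4): for $(u,w)\in\cN$, $\J(u,w)=\sup_{\A(u)}\J\ge\J(ru/\|u\|_\D,0)\ge\inf_{\|v\|_\D=r}\J(v,0)>0$. Second, the small-amplitude positivity claim ``$\J(tu,\psi)>0$ for $t$ small'' should be stated with $\psi=0$; for nonzero $\psi$ the term $\tfrac12\int_{\R^3}V|\nabla\psi|^2\,dx$ can push $\J(tu,\psi)$ negative arbitrarily close to the boundary $t=0$ of $\A(u)$, and likewise the attainment of the supremum over the infinite-dimensional set $\A(u)$ needs the lower-semicontinuity conditions (A2)--(A3) rather than coercivity alone. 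None of this affects the correctness of the overall proof, since the abstract theorem supplies exactly these steps once (A1)--(B3) are checked.
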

\begin{proof}
Setting $X:=\U\times\W$,
$X^+:=\U\times\{0\}$ and $\tX:=\{0\}\times\V$ we check assumptions (A1)-(A4), (B1)-(B3) of \cite{BartschMederski}[Theorem 4.1, Proposition 4.2] for $\J:X\to\R$ 
of the form:
$$\J(u,w)=\frac{1}{2}\|u\|^2_\D-\cI(u,w),$$
The convexity of $\cI\in \cC^1(L^{p,q},\R)$, (V), (F3) and Lemma \ref{LemConvWeakIpliesStrong} yield:
\begin{itemize}
 \item[(A1)] $\cI|_{\U\times\W}\in \cC^1(\U\times\W,\R)$ and $\cI(u,w)\geq I(0,0)=0$ for any $(u,w)\in \U\times\W$.
 \item[(A2)] If $u_n\to u$ in $\U$, $w_n\weakto w$ in $\W$,  then $\liminf_{n\to\infty}\cI(u_n,w_n)\geq \cI(u,w)$.
 \item[(A3)] If $u_n\to u$ in $\U$, $w_n\weakto w$ in $\W$ and $\cI(u_n,w_n)\to \cI(u,w)$, then $(u_n,w_n)\to (u,w)$.
\end{itemize}
Moreover the following condition holds.
\begin{itemize}
 \item[(A4)] There exists $r>0$ such that $\inf_{\|u\|_\D=r}\J(u,0)>0$.
\end{itemize}
Indeed, in view of Lemma \ref{LemEstimate} $(c)$ and by (F3) for any $u\in\U$
$$\J(u,0)\geq c_3\|u\|_\D^2 -\int_{\R^3}F(x,u)\,dx
\geq c_3\|u\|_\D^2 -\frac{c_2}{2}\int_{\R^3}|u|^6\,dx
\geq c_3\|u\|_\D^2 -\frac{c_2}{2}S^{-3}\|u\|_\D^6$$
and thus (A4) is satisfied.
Moreover by Lemma \ref{LemEstimate} $(b)$ it is easy to verify
\begin{itemize}
 \item[(B1)] $\|u\|_\D+\cI(u,w)\to\infty$ as $\|(u,w)\|\to\infty$.
\end{itemize}
We prove the following condition.
\begin{itemize}
 \item[(B2)] $\cI(t_n(u_n,w_n))/t_n^2\to\infty$ if $t_n\to\infty$ and $u_n\to u$ for some $u\neq 0$ as $n\to\infty$.
\end{itemize}
Observe that by Lemma \ref{LemEstimate} $(b)$
\begin{eqnarray*}
\cI(t_n(u_n,w_n))&\geq&
\int_{\R^3}F(x,t_n(u_n+\nabla w_n))\,dx\\
&\geq& c_1
\min\{ |t_n u_n+t_n\nabla w_n|_{p,q}^p,|t_n u_n+t_n\nabla w_n|_{p,q}^q\}\\
&\geq&c_1t_n^2
\min\{ t_n^{p-2}|u_n+\nabla w_n|_{p,q}^p,t_n^{q-2}|u_n+\nabla w_n|_{p,q}^q\}.
\end{eqnarray*} 
If $\liminf_{n\to\infty}|u_n+\nabla w_n|_{p,q}=0$ as $n\to\infty$, then passing to a subsequence we get 
$$|u+\nabla w_n|_{p,q}\to 0.$$
Hence  we get a contradiction to the assumption $u\neq 0$. Therefore $|u_n+\nabla w_n|_{p,q}$ is bounded away from $0$ and $\cI(t_n(u_n,w_n))/t_n^2\to\infty$ as $n\to\infty$.
Finally the arguments provided in proof of Proposition \ref{Propuv_N} show that:
\begin{itemize}
 \item[(B3)]
$\frac{t^2-1}{2}\langle \cI'(u,w),(u,w)\rangle+t\langle \cI'(u,w),(0,\psi)+\cI(u,w)-\cI(tu,tw+\psi)<0$
for any $t\geq 0$, $u\in\U$ and $w,\psi\in\W$ such that $(tu,tw+\psi)\neq (u,w)$.
\end{itemize}
Finally we obtain statements $(a)$ and $(b)$ applying \cite{BartschMederski}[Theorem 4.1 a), Proposition 4.2]. The continuity of $m:\U\setminus\{0\}\to\N$ follows directly from arguments given in proof of \cite{BartschMederski}[Theorem 4.1].  
\end{proof}

Since there is no compact embedding of $\U$ into $L^{p,q}$,
the critical point theory provided in \cite{BartschMederski}[Section 4] is not sufficient to show that $c=\inf_{\cN}\J$ is achieved by a critical point of $\J$. Therefore in the next Section \ref{SectAnalysis} we provide an analysis of bounded sequences in $\D(\R^3,\R^3)$ and of bounded sequences of the Nehari-Pankov manifold.

\section{Analysis of bounded sequences}\label{SectAnalysis}

We need further properties of $\cI$.
\begin{Lem}\label{LemDefofW}$\hbox{}$\\
$(a)$ There is the unique continuous map $w:L^{p,q}\to\W$ such that
\begin{equation}\label{DefofW(u)}
\cI(u,w(u))=\inf_{w\in\W}\cI(u,w).
\end{equation}
$(b)$ $w$ maps bounded sets into bounded sets and $w(0)=0$.\\
$(c)$ If $u\in\U\setminus\{0\}$ then 
$m(u)=(t(u)u,w(t(u)u))$.
\end{Lem}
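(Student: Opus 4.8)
The plan is to establish Lemma~\ref{LemDefofW} by exploiting the strict convexity of $\cI$ and the coercivity-type estimates already available. For part $(a)$, fix $u\in L^{p,q}$ and consider the functional $w\mapsto \cI(u,w)$ on $\W$. By Lemma~\ref{LemEstimate}$(b)$ together with the equivalence of $\|\cdot\|_\W$ with $|\nabla w|_{p,q}$ we get $\cI(u,w)\to\infty$ as $\|w\|_\W\to\infty$ (the quadratic $V$-term is controlled by Lemma~\ref{LemEstimate}$(a)$ while the $F$-term dominates), so any minimizing sequence is bounded; passing to a weakly convergent subsequence and using the weak lower semicontinuity (which follows from the convexity of $\cI$ in the second variable, a consequence of (F2), plus the fact that $\cI\in\cC^1(L^{p,q}\times\W,\R)$ is convex hence weakly l.s.c.), one obtains a minimizer. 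Uniqueness is immediate from strict convexity of $\cI$: if $w_1\ne w_2$ both minimize, then $\cI(u,(w_1+w_2)/2)<\tfrac12(\cI(u,w_1)+\cI(u,w_2))$, a contradiction. Since the minimizer $w(u)$ is the unique solution of $\J'$-type equation $\partial_w\cI(u,w)=0$, i.e.\ a strictly monotone equation, $w$ is well defined as a map $L^{p,q}\to\W$.

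For continuity of $w:L^{p,q}\to\W$, I would argue by contradiction: suppose $u_n\to u$ in $L^{p,q}$ but $w(u_n)\not\to w(u)$. First one shows $(w(u_n))$ is bounded --- this is part $(b)$, which I would actually prove first: by minimality $\cI(u_n,w(u_n))\le \cI(u_n,0)$, and since $\cI(u_n,0)=I(u_n)$ stays bounded on bounded sets (again Lemma~\ref{LemEstimate}$(a)$,$(b)$), while $\cI(u_n,w(u_n))$ bounds $\min\{\|w(u_n)\|^p,\|w(u_n)\|^q\}$ from below up to the $V$-correction, boundedness of $(w(u_n))$ follows; that $w(0)=0$ is clear since $\cI(0,w)\ge \cI(0,0)=0$ with equality only at $w=0$ by strict convexity. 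Having boundedness, extract $w(u_n)\weakto \bar w$ in $\W$; then $u_n+\nabla w(u_n)\weakto u+\nabla\bar w$ in $L^{p,q}$. By weak l.s.c.\ $\cI(u,\bar w)\le \liminf \cI(u_n,w(u_n))\le \liminf \cI(u_n,w(u))=\cI(u,w(u))$, where the middle inequality is minimality and the last equality uses $u_n\to u$ strongly together with continuity of $\cI$. Hence $\bar w$ is a minimizer, so $\bar w=w(u)$. Moreover this chain forces $\cI(u_n,w(u_n))\to\cI(u,w(u))$, and then $\cI(u_n,w(u_n))=I(u_n+\nabla w(u_n))$ while $I(\,\cdot\,)$ together with Lemma~\ref{LemConvWeakIpliesStrong} applied to $E_n:=u_n+\nabla w(u_n)\weakto u+\nabla w(u)$ with $I(E_n)\to I(u+\nabla w(u))$ gives $E_n\to u+\nabla w(u)$ in $L^{p,q}$; subtracting $u_n\to u$ yields $\nabla w(u_n)\to\nabla w(u)$ in $L^{p,q}$, i.e.\ $w(u_n)\to w(u)$ in $\W$, the desired contradiction.

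Part $(c)$ is essentially a compatibility check between the map $m$ of Proposition~\ref{PropDefOfm(u)} and the map $w$ just constructed. Given $u\in\U\setminus\{0\}$, write $m(u)=(t(u)u,\hat w)\in\cN\cap\A(u)$; the defining condition $(u,w)\in\cN$ includes $\J'(t(u)u,\hat w)(0,\psi)=0$ for all $\psi\in\W$, which is exactly $\partial_w\cI(t(u)u,\hat w)=0$, i.e.\ $\hat w$ is the (unique, by strict convexity) critical point of $w\mapsto\cI(t(u)u,w)$, hence $\hat w=w(t(u)u)$. Combined with the characterization $\J(m(u))=\sup_{\A(u)}\J$ this gives $m(u)=(t(u)u,w(t(u)u))$.

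The main obstacle I anticipate is the weak lower semicontinuity and coercivity bookkeeping in part $(a)$: one must be careful that the indefinite quadratic term $\tfrac12\int V|u+\nabla w|^2$ (nonpositive but not sign-definite against the $F$-term in the relevant direction) does not destroy coercivity of $w\mapsto\cI(u,w)$; this is handled by the interpolation estimate of Lemma~\ref{LemEstimate}$(a)$, which shows the $V$-term is dominated by a lower power of $|\nabla w|_{p,q}$ than the $F$-term contributes, but the splitting into the $L^p$ and $L^q$ regimes $\Omega_E$, $\Omega_E^c$ requires some care. Everything else reduces to the strict convexity from (F2) and the already-proven Lemmas~\ref{LemConvWeakIpliesStrong} and~\ref{LemBrezLieb}.
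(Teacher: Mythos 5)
Your argument is correct and follows essentially the same route as the paper: direct method plus strict convexity and coercivity for existence/uniqueness of the minimizer in $(a)$, the bound $\cI(u_n,w(u_n))\le\cI(u_n,0)$ with Lemma~\ref{LemEstimate}$(b)$ for part $(b)$, and then the chain of inequalities through weak lower semicontinuity combined with Lemma~\ref{LemConvWeakIpliesStrong} to upgrade weak to strong convergence and obtain continuity of $w$. Your argument for $(c)$ is a slight (and clean) variant: you use that $\J'(t(u)u,\hat w)(0,\psi)=0$ forces $\hat w$ to be the unique critical point, hence the unique minimizer by strict convexity; the paper instead compares $\J(m(u))$ with $\J(t(u)u,w(t(u)u))$ and invokes the maximality characterization of Proposition~\ref{PropDefOfm(u)}$(a)$ — both are short and valid.

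One small remark on your closing paragraph: the caution about the ``indefinite quadratic $V$-term'' is unfounded. Since $V\le 0$ a.e., the term appearing in $\cI(u,w)=I(u+\nabla w)$ is $-\tfrac12\int_{\R^3}V|u+\nabla w|^2\,dx\ge 0$, not $+\tfrac12\int V|\cdot|^2$ (which is the sign it has in $\J$, not in $\cI$). Both contributions to $\cI$ are nonnegative, so there is no competition to control: the $V$-term only reinforces coercivity, and Lemma~\ref{LemEstimate}$(a)$ is needed merely to know the term is finite, not to dominate it. This does not affect the validity of your proof, which already concludes coercivity correctly.
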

\begin{proof}
$(a)$ Let $u\in L^{p,q}$. 
Since  $\W\ni w\mapsto \cI(u,w)\in\R$ is continuous, strictly convex and coercive, then there exists a unique $w(u)\in\W$ such that (\ref{DefofW(u)}) holds.
We show that the map $w:L^{p,q}\to\W$ is continuous. Let $u_n\to u$ in $L^{p,q}$. Since 
\begin{equation}\label{Lem_m_map_ineq}
0\leq \cI(u_n,w(u_n))\leq \cI(u_n,0)
\end{equation}
we obtain that $w(u_n)$ is bounded and we may assume that $w(u_n)\rightharpoonup w_0$ for some $w_0\in\W$.
Observe that by the (sequentially) lower semi-continuity of $\cI$ we get
$$\cI(u,w(u))\leq \cI(u,w_0)\leq \liminf_{n\to\infty} 
\cI(u_n,w(u_n))\leq 
\liminf_{n\to\infty} 
\cI(u_n,w(u))=\cI(u,w(u)).$$
Hence $w(u)=w_0$ and by Lemma \ref{LemConvWeakIpliesStrong} we have $u_n+\nabla w(u_n)\to u+\nabla w(u)$ in $L^{p,q}$. Thus $w(u_n)\to w(u)$ in $\W$.\\ 
$(b)$ This follows from inequality (\ref{Lem_m_map_ineq}) and Lemma \ref{LemEstimate} $(b)$.\\
$(c)$ Let $u\in\U\setminus\{0\}$ and $m(u)=(t(u)u,w)$. Note that
$$\J(m(u))=\frac{1}{2}\|t(u)u\|^2_{\D}+\cI(t(u)u,w)\leq 
\frac{1}{2}\|t(u)u\|^2_{\D}-\cI(t(u)u,w(t(u)u))=\J(t(u)u,w(t(u)u)).$$
In view of Proposition \ref{PropDefOfm(u)} $(a)$ we get 
$m(u)=(t(u)u,w(t(u)u)$.
\end{proof}

Below we analyse a bounded sequence $(u_n)$ in $\D(\R^3,\R^3)$ and provide a possibly infinite splitting of $\lim_{n\to\infty}\cI(u_n,w(u_n))$.

\begin{Lem}\label{LemSplit1}
If $(u_n)$ is bounded in $\D(\R^3,\R^3)$ then, up to a subsequence,
there is $N\in \N\cup \{\infty\}$ and there are sequences $(\bar{u}_i)_{i\in\N}\subset \D(\R^3,\R^3)$, $(x_n^i)_{n\geq i}\subset \Z^3$ such that $x_n^0=0$ and the following conditions hold:\\
$(a)$ If $N<\infty$ then $\bar{u}_i\neq 0$ for $1\leq i\leq N$ and $\bar{u}_i=0$ for $i>N$, if $N=\infty$ then $\bar{u}_i\neq 0$ for all $i\geq 1$,\\
$(b)$ $u_n(\cdot+x_n^i)\rightharpoonup \bar{u}_i\hbox{ in }\D(\R^3,\R^3)$ for any $0\leq i < N+1$ \textnormal{(\footnote{If $N=\infty$ then $N+1=\infty$ as well.})},\\
$(c)$ $u_n(\cdot+x_n^i)\to\bar{u}_i\hbox{ in }L_{loc}^{p,q}$ and a.e. in $\R^3$ for any $0\leq i < N+1$,\\
$(d)$ $u_n-\sum_{i=0}^n\bar{u}_i(\cdot-x_n^i)\to 0\hbox{ in }L^{p,q}$.\\
Moreover\\
$(e)$ $\nabla w(u_n)\rightharpoonup \nabla w(\bar{u}_0)$ and $\nabla w_0(u_n)(\cdot+x_n^i)\rightharpoonup \nabla w_0(\bar{u}_i)\hbox{ in }L^{p,q}$ for any $1\leq i < N+1$,\\
$(f)$ $\nabla w(u_n)\to \nabla w(\bar{u}_0)$ and $\nabla w_0(u_n)(\cdot+x_n^i)\to \nabla w_0(\bar{u}_i)\hbox{ in }L_{loc}^{p,q}$ and a.e. in $\R^3$ for any $1\leq i < N+1$,\\
$(g)$ $\nabla w(u_n)-\nabla w(\bar{u}_0)-\sum_{i=1}^n\nabla w_0(\bar{u}_i)(\cdot-x_n^i)\to 0\hbox{ in }L^{p,q}$,\\
$(h)$ $\lim_{n\to\infty}\cI(u_n,w(u_n))=\cI(\bar{u}_0,w(\bar{u}_0))+\sum_{i=1}^{\infty} \cI_0(\bar{u}_i,w_0(\bar{u}_i))<\infty$,\\
where $w_0$ and $\cI_0$ are maps given by $(\ref{DefofW(u)})$ and $(\ref{DefOfXi})$ under assumption $V=0$.
\end{Lem}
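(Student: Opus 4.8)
The plan is to iterate a "single-bubble extraction" step. Given the bounded sequence $(u_n)\subset\D(\R^3,\R^3)$, set $v_n^0:=u_n$ and $\bar u_0:=\text{w-}\lim v_n^0$ (after passing to a subsequence), with $x_n^0=0$; this gives $(b)$, $(c)$ for $i=0$ by Rellich on bounded domains and the Sobolev embedding $\D\hookrightarrow L^6\hookrightarrow L^{p,q}_{loc}$. Now suppose $\bar u_0,\dots,\bar u_{k-1}$ and translations $x_n^0,\dots,x_n^{k-1}$ have been chosen; put $v_n^k:=u_n-\sum_{i=0}^{k-1}\bar u_i(\cdot-x_n^i)$ and let $\delta_k:=\limsup_n |v_n^k|_{p,q}$. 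If $\delta_k=0$ we stop and set $N=k-1$, obtaining $(d)$ directly. If $\delta_k>0$, a vanishing-type argument (a Lions-lemma variant adapted to $L^{p,q}$, using that $\|v_n^k\|_\D$ stays bounded) produces a sequence $(x_n^k)\subset\Z^3$ with $|x_n^k-x_n^i|\to\infty$ for all $i<k$ such that $v_n^k(\cdot+x_n^k)\weakto \bar u_k\ne 0$ in $\D$ along a subsequence; the orthogonality of translations combined with the weak convergence already established shows that $u_n(\cdot+x_n^k)\weakto\bar u_k$ as well, giving $(b)$ and $(c)$ for $i=k$. A standard Brezis–Lieb / iterated-weak-limit bookkeeping yields $\|v_n^{k+1}\|_\D^2=\|v_n^k\|_\D^2-\|\bar u_k\|_\D^2+o(1)$, so $\sum_i\|\bar u_i\|_\D^2\le\limsup\|u_n\|_\D^2<\infty$; in particular $\|\bar u_i\|_\D\to 0$, which forces $\delta_k\to 0$ (again via the $L^{p,q}$ vanishing lemma), so the process terminates in finitely many steps or the tails vanish in the limit, and in both cases $(a)$ and $(d)$ hold.

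Next I would transfer the decomposition through the Helmholtz-type minimizer $w$. For $(e)$--$(g)$: since $w$ and $w_0$ map bounded sets to bounded sets (Lemma \ref{LemDefofW}$(b)$), the sequences $\nabla w(u_n)$ and $\nabla w_0(u_n)(\cdot+x_n^i)$ are bounded in $L^{p,q}$ and in $\nabla\W$, hence converge weakly along a subsequence. To identify the weak limits with $\nabla w(\bar u_0)$ and $\nabla w_0(\bar u_i)$ I would use the variational characterization \eqref{DefofW(u)}: by lower semicontinuity of $\cI$ (resp. $\cI_0$) together with the minimality defining $w(u_n)$, the limit point minimizes $\cI(\bar u_0,\cdot)$, and uniqueness of the minimizer pins it down; the translated statements follow by the $\Z^3$-periodicity of $F$ (assumption (F1)) which makes $\cI_0$ translation-invariant. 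Then Lemma \ref{LemConvWeakIpliesStrong} upgrades these to $L^{p,q}_{loc}$ (and a.e.) convergence, giving $(f)$. For $(g)$ one argues exactly as for $(d)$ but for the sequence $\nabla w(u_n)$: the translated profiles $\nabla w_0(\bar u_i)$ are asymptotically orthogonal in $L^{p,q}$ because their "centers" $x_n^i$ separate, and the remainder's $L^{p,q}$-norm is controlled by that of $v_n^k$ via the Lipschitz-type continuity of $u\mapsto\nabla w(u)$ on bounded sets.

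Finally, $(h)$ is the energy-splitting identity. I would expand $\cI(u_n,w(u_n))=-\tfrac12\int V|u_n+\nabla w(u_n)|^2+\int F(x,u_n+\nabla w(u_n))$. For the $F$-term, the iterated Brezis–Lieb lemma (Lemma \ref{LemBrezLieb}), applied repeatedly to peel off each bubble $\bar u_i(\cdot-x_n^i)+\nabla w_0(\bar u_i)(\cdot-x_n^i)$ using the a.e. convergences from $(c)$ and $(f)$ and the $L^{p,q}$ boundedness, gives $\int F(x,u_n+\nabla w(u_n))=\int F(x,\bar u_0+\nabla w(\bar u_0))+\sum_{i=1}^{k}\int F(x,\bar u_i+\nabla w_0(\bar u_i))+o(1)+\varepsilon_k$ with $\varepsilon_k\to 0$ as $k\to\infty$ (controlled by the vanishing tail via Lemma \ref{LemEstimate}$(b)$); the periodicity of $F$ is what lets each translated term be rewritten with $F_0$ at the origin. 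For the $V$-term: since $V\in L^{3/2}\cap L^{p/(p-2)}\cap L^{q/(q-2)}$ concentrates near the origin while $|x_n^i|\to\infty$, each translated bubble contributes $0$ to the $V$-integral in the limit (dominated convergence against the $L^6$-bound on the $\D$-part), so only the $i=0$ term survives — matching the asymmetry in the claimed formula where $\cI$ appears for $\bar u_0$ but $\cI_0$ for $i\ge 1$. Summing and letting $k\to\infty$ yields $(h)$, and finiteness follows from $\sum_i\|\bar u_i\|_\D^2<\infty$ together with Lemma \ref{LemEstimate}.

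\emph{Main obstacle.} The delicate point is the simultaneous bookkeeping of the two coupled decompositions (of $u_n$ and of $\nabla w(u_n)$) and proving that the remainders vanish \emph{in the $L^{p,q}$ norm}, not merely locally: the space $L^{p,q}=L^p+L^q$ does not behave like a single $L^s$, so the usual Lions vanishing lemma and the orthogonality-of-translates estimates must be re-derived in this setting (using Lemma \ref{LemBenFor} to reduce $L^{p,q}$-boundedness to level-set and truncated-$L^p$/$L^q$ information), and one must check that the nonlinear map $u\mapsto\nabla w(u)$ is compatible with these translations and truncations well enough to propagate the splitting without losing mass.
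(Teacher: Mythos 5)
The extraction of the profile decomposition $(a)$--$(d)$ for the divergence-free part $u_n$ is essentially the same as the paper's (which simply invokes D'Aprile--Siciliano, Lem.~4.2), so that part is fine. The substantive gaps are in $(e)$--$(g)$, and they propagate into $(h)$.

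First, the proposed identification of the weak limit of $\nabla w(u_n)$ does not work as stated. You argue: ``by lower semicontinuity of $\cI$ together with the minimality defining $w(u_n)$, the limit point minimizes $\cI(\bar u_0,\cdot)$, and uniqueness pins it down.'' But the minimality of $w(u_n)$ gives $\cI(u_n,w(u_n))\le\cI(u_n,\psi)$, and after lower semicontinuity you only reach
$\cI(\bar u_0,w_*)\le\liminf_n\cI(u_n,\psi)$,
not $\cI(\bar u_0,\psi)$. When mass escapes to infinity (which is precisely the case being treated here, and the point of the lemma), $\liminf_n\cI(u_n,\psi)$ is strictly larger than $\cI(\bar u_0,\psi)$, so you cannot conclude that $w_*$ minimizes $\cI(\bar u_0,\cdot)$. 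The identity $w_*=w(\bar u_0)$ is true, but it is \emph{downstream} of the energy splitting, not upstream of it: the paper first establishes the upper bound $\limsup_n\cI(\sum_i\bar u_i(\cdot-x_n^i),\cdot)\le\cI(\bar u_0,w(\bar u_0))+\sum_{i\ge1}\cI_0(\bar u_i,w_0(\bar u_i))$ (Claim~4) and the matching lower bound via the disjoint-ball decomposition (Claim~5); equality in this squeeze forces $\cI(\bar u_0,\bar w_0)=\cI(\bar u_0,w(\bar u_0))$, and only then does uniqueness of the minimizer apply, together with Lemma~\ref{LemConvWeakIpliesStrong} to upgrade to strong local convergence. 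Your plan inverts the logical order and therefore stalls.

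Second, the proof of $(g)$ as sketched leans on an unavailable regularity property: you invoke ``Lipschitz-type continuity of $u\mapsto\nabla w(u)$ on bounded sets'' to control the remainder of $\nabla w(u_n)$ by that of $u_n$. Lemma~\ref{LemDefofW} gives only that $w$ is continuous and maps bounded sets to bounded sets; no Lipschitz estimate (uniform or otherwise) is stated, and the map is a nonlinear minimizer-selection that has no obvious Lipschitz structure in $L^{p,q}$. The paper proves $(g)$ differently: it runs an iterated Brezis--Lieb argument (Lemma~\ref{LemBrezLieb} applied to $\bar F=-V|u|^2+F$) against the already-established splitting $(h)$ to conclude $\cI_0(u_n^N,w_n^N)\to 0$, and then uses the coercivity estimate of Lemma~\ref{LemEstimate}$(b)$ to get $|w_n^N|_{p,q}\to 0$. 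This circumvents any need for Lipschitz control. Without one of these two ingredients --- either the squeeze on $\cI$ or a Lipschitz bound --- the claimed propagation of the $L^{p,q}$ remainder from $u_n$ to $\nabla w(u_n)$ is unsupported, and $(g)$ and $(h)$ remain open.
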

\begin{proof}
We may assume that $u_n\rightharpoonup \bar{u}_0$ in $\D(\R^3,\R^3)$ for some $\bar{u}_0\in\D(\R^3,\R^3)$. Then $(b)$, $(c)$ and $(d)$ has been obtained in proof of \cite{DAprileSiciliano}[Lem. 4.2]. Indeed, recall that using a variant of the concentration compactness argument \cite{DAprileSiciliano}[Lem. 4.1] we show that there is $N\in \N\cup \{\infty\}$ and there are sequences $(\bar{u}_i)_{i\in\N}\subset \D(\R^3,\R^3)$, $(x_n^i)_{n\geq i}\subset \R^3$ and positive numbers $(c_i)_{i\in\N}$ such that $x_n^0=0$ and, up to a subsequence, $(b)$, $(d)$ hold. Moreover for any $0\leq i <N+1$, $n\geq i$
\begin{eqnarray}
\label{Eqstronglok_u_n}
&&u_n(\cdot+x_n^i)\chi_{B(0,n)}\to\bar{u}_i\hbox{ in }L^{p,q},\\
\label{Eqxnxm}
&&|x_n^i-x_n^j|\geq n-2r\hbox{ for } j\neq i, 0\leq j< N+1\\\label{EqIntegralunSum}
&&\int_{B(x_n^{i+1},r)}\Big|u_n-\sum_{j=0}^{i}\bar{u}_j(\cdot -x_n^j)\Big|^2\, dx \geq c_{i+1},
%\\\label{Ewneq0}
%&&\Big\|u_n-\sum_{j=0}^{i}\bar{u}_j(\cdot -x_n^j)\Big\|_{p,q}\nrightarrow 0.
\end{eqnarray}
where $r>0$.
If $N<\infty$ then we take $\bar{u}_i=0$ for $i> N$. If $N=\infty$ then the above conditions hold for any $i\geq 0$. Observe that we may assume that 
$(x_n^i)_{n\geq i}\subset \Z^3$ for $r>\sqrt{3}$.
Hence the local convergence in $(c)$ follows directly from (\ref{Eqstronglok_u_n}). 
Moreover the boundedness of $(w(u_n))_{n\in\N}$ and $(w_0(u_n))_{n\in\N}$ in $\W$ implies that we may assume
\begin{eqnarray}\label{Eqweakw}
&&\nabla w(u_n)\rightharpoonup \nabla \bar{w}_0\hbox{ in }L^{p,q},\\\label{Eqweakw2}
&&\nabla w_0(u_n)(\cdot+x_n^i)\rightharpoonup \nabla \bar{w}_i\hbox{ in }L^{p,q}\hbox{ for } i\geq 1.
\end{eqnarray}
Observe that
$(a)$, $(e)$ -- $(h)$ are a consequence  of the following claims and the almost everywhere convergence in $(c)$ and $(f)$ follows from the local convergence in $L^{p,q}$ (see \cite{BadialePisaniRolando}[Prop. 2.8]).\\
{\it Claim 1.} $\bar{u}_{i}\neq 0$ for $1\leq i<N+1$.\\
Let $0\leq i<N$. Observe that (\ref{EqIntegralunSum}) implies that
\begin{eqnarray*}
0&<&\sqrt{c_{i+1}}\leq\Big(\int_{B(x_n^{i+1},r)}\Big|u_n-\sum_{j=0}^{i}\bar{u}_j(\cdot -x_n^j)\Big|^2\, dx\Big)^{\frac{1}{2}}\\
&\leq& \Big(\int_{B(0,r)}|u_n(\cdot +x_n^{i+1})|^2\, dx\Big)^{\frac{1}{2}}+
\sum_{j=0}^{i} \Big(\int_{B(x_n^{i+1}-x_n^j,r)}|\bar{u}_j|^2\, dx\Big)^{\frac{1}{2}}.
\end{eqnarray*}
From (\ref{Eqstronglok_u_n}) we easily see that $\bar{u}_j\chi_{B(x_n^{i+1}-x_n^j,r)}\to 0$ in $L^{p,q}$ and then $\bar{u}_j(\cdot +x_n^{i+1}-x_n^j)\chi_{B(0,r)}\to 0$ in $L^{p,q}$ for any $0\leq j \leq i$. In view of \cite{BadialePisaniRolando}[Prop. 2.14] we know that $\bar{u}_j(\cdot +x_n^{i+1}-x_n^j)\to 0$ in $L^2(B(0,r),\R^3)$.
Therefore, up to a subsequence, $u_n(\cdot +x_n^{i+1})\to \bar{u}_{i+1}$ in $L^2(B(0,r),\R^3)$ and then
$$0<\sqrt{c_{i+1}}\leq \Big(\int_{B(0,r)} |\bar{u}_{i+1}|^2\, dx\Big)^{\frac{1}{2}}.$$
Thus $\bar{u}_{i+1}\neq 0$ for $0\leq i<N$. 
%Suppose that $N<\infty$ and $\bar{u}_{N}=0$. In view of $(d)$ 
%$$\Big\|u_n-\sum_{j=0}^{N-1}\bar{u}_j(\cdot -x_n^j)\Big\|_{p,q}\to 0,$$
%which contradicts (\ref{Ewneq0}).
\\
{\it Claim 2.} There holds
\begin{equation}\label{EqSeries}
\sum_{i=1}^{\infty}
\cI_0(\bar{u}_i,w_0(\bar{u}_i))<+\infty.
\end{equation}
Indeed, observe that Lemma \ref{LemEstimate} $(b)$, the weak lower semicontinuity of $\cI_0$ and conditions $(b)$, (\ref{Eqweakw2}) imply that
\begin{eqnarray*}
%\sum_{i=1}^k c_1\min\{ |\bar{u}_i+\nabla w_0(\bar{u}_i)|_{p,q}^p,|\bar{u}_i+\nabla w_0(\bar{u}_i)|_{p,q}^q\}\leq
\sum_{i=1}^k \cI_0(\bar{u}_i,w_0(\bar{u}_i))&\leq&
 \sum_{i=1}^k \cI_0(\bar{u}_i,w_i)\\
&\leq& \sum_{i=1}^k \liminf_{n\to\infty}\cI_0(u_n(\cdot+x_n^i)\chi_{B(0,\frac{n-2}{2})},w_0(u_n)(\cdot+x_n^i)\chi_{B(0,\frac{n-2}{2})})\\
&\leq& \liminf_{n\to\infty}\sum_{i=1}^k \cI_0(u_n\chi_{B(x_n^i,\frac{n-2}{2})},w_0(u_n)\chi_{B(x_n^i,\frac{n-2r}{2})})\leq
 \liminf_{n\to\infty} \cI_0(u_n,w_0(u_n))
\end{eqnarray*}
for any $k\in\N$. By Lemma \ref{LemDefofW} $(b)$ we obtain that $(\cI_0(u_n,w_0(u_n)))_{n\in\N}$ is bounded. Therefore (\ref{EqSeries}) holds.\\
{\it Claim 3.} Up to a subsequence
\begin{equation}\label{LemClaim2}
\lim_{n\to\infty}\int_{\bigcup_{j=1}^nB(x_n^j,\frac{n-2r}{2})}V(x)\Big|\sum_{i=1}^n(\bar{u}_i+\nabla w_0(\bar{u}_i))(\cdot-x_n^i))\Big|^2\,dx=0.
\end{equation}
We show that up to a subsequence $\sum_{i=1}^n(\bar{u}_i+\nabla w_0(\bar{u}_i))(\cdot-x_n^i))\chi_{\bigcup_{j=1}^nB(x_n^j,\frac{n-2r}{2})}$ is bounded in $L^{p,q}$. Let us fix $k\geq 1$ and observe that
\begin{eqnarray*}
&&\cI_0\Big(\sum_{i=1}^k\bar{u}_i(\cdot-x_n^i)\chi_{\bigcup_{j=1}^kB(x_n^j,\frac{n-2r}{2})},\sum_{i=1}^k w_0(\bar{u}_i)(\cdot-x_n^i))\chi_{\bigcup_{j=1}^kB(x_n^j,\frac{n-2r}{2})}\Big)\\
&&= \sum_{j=1}^k \cI_0\Big(\sum_{i=1}^k\bar{u}_i(\cdot-x_n^i)\chi_{B(x_n^j,\frac{n-2r}{2})},\sum_{i=1}^k w_0(\bar{u}_i)(\cdot-x_n^i))\chi_{B(x_n^j,\frac{n-2r}{2})}\Big).
%&&= \sum_{j=1}^n \cI_0\Big(\sum_{i=1}^n\bar{u}_i(\cdot+x_n^j-x_n^i)\chi_{B(0,\frac{n-2r}{2})},\sum_{i=1}^n\nabla w_0(\bar{u}_i)(\cdot+x_n^j-x_n^i))\chi_{B(0,\frac{n-2r}{2})}\Big).
\end{eqnarray*}
Let 
$$v_0:=\nabla w(\bar{u}_0)$$
and for $i\geq 1$
$$v_i:=\nabla w_0(\bar{u}_i).$$
Since 
$$B\Big(x_n^j-x_n^i,\frac{n-2r}{2}\Big)\subset \R^3\setminus B\Big(0,\frac{n-2r}{2}\Big)$$
for $i\neq j$,
then for given $0\leq j\leq k$
$$\Big|\sum_{0\leq i\leq k,i\neq j}(\bar{u}_i+v_i)(\cdot-x_n^i)\chi_{B(x_n^j,\frac{n-2r}{2})}\Big|_{p,q}\leq
\sum_{0\leq i\leq k,i\neq j}\Big|(\bar{u}_i+v_i)\chi_{B(x_n^j-x_n^i,\frac{n-2r}{2})}\Big|_{p,q}\to 0$$
as $n\to\infty$. Then for any $k\geq 1$ there is sufficiently large $n=n(k)$ such that
\begin{eqnarray}\label{eq:Lem_estimation_important}
&&\cI_0\Big(\sum_{i=1}^k\bar{u}_i(\cdot-x_n^i)\chi_{\bigcup_{j=1}^kB(x_n^j,\frac{n-2r}{2})},\sum_{i=1}^kw_0(\bar{u}_i)(\cdot-x_n^i)\chi_{\bigcup_{j=1}^kB(x_n^j,\frac{n-2r}{2})}\Big)\\\nonumber
&&\leq \sum_{j=1}^k \cI_0(\bar{u}_j\chi_{B(0,\frac{n-2r}{2})},w_0(\bar{u}_j)\chi_{B(0,\frac{n-2r}{2})}\Big)+\frac{1}{k}\\
&&\leq \sum_{j=1}^\infty \cI_0(\bar{u}_j, w_0(\bar{u}_j))+\frac{1}{k} \nonumber
\end{eqnarray}
and by passing to a subsequence, in view of {\em Claim 2} and Lemma \ref{LemEstimate} (b)  we get 
the boundedness of  $\sum_{i=1}^n(\bar{u}_i+\nabla w_0(\bar{u}_i))(\cdot-x_n^i))\chi_{\bigcup_{j=1}^nB(x_n^j,\frac{n-2r}{2})}$ in $L^{p,q}$.
Now, note that similarly as in Lemma \ref{LemEstimate} $(a)$ we obtain
\begin{eqnarray*}
&&\int_{\bigcup_{j=1}^nB(x_n^j,\frac{n-2r}{2})}V(x)\Big|\sum_{i=1}^n(\bar{u}_i+\nabla w_0(\bar{u}_i))(\cdot-x_n^i))\Big|^2\,dx\\
&&\leq C\max\{|V\chi_{\bigcup_{j=1}^nB(x_n^j,\frac{n-2r}{2})}|_{\frac{p}{p-2}},|V\chi_{\bigcup_{j=1}^nB(x_n^j,\frac{n-2r}{2})}|_{\frac{q}{q-2}}\}
\end{eqnarray*}
for some constant $C>0$. Since 
$$\bigcup_{j=1}^nB\Big(x_n^j,\frac{n-2r}{2}\Big)\subset \R^3\setminus B\Big(0,\frac{n-2r}{2}\Big)$$
then we get (\ref{LemClaim2}).\\
{\it Claim 4.}  Up to a subsequence
\begin{eqnarray}\label{Ineq15}
&&\limsup_{n\to\infty}\cI\Big(\sum_{i=0}^n\bar{u}_i(\cdot-x_n^i),w(\bar{u}_0)+
\sum_{i=0}^nw_0(\bar{u}_i)(\cdot-x_n^i)\Big)\\\nonumber
&&\leq 
\cI(\bar{u}_0,w(\bar{u}_0))+\sum_{i=1}^{\infty} \cI_0(\bar{u}_i,w_0(\bar{u}_i)).
\end{eqnarray}
Similarly as above
$$v_0:=\nabla w(\bar{u}_0)\hbox{ and } v_i:=\nabla w_0(\bar{u}_i)\hbox{ for }i\geq 1.$$
Note that $B(x_n^i,\frac{n-2r}{2})\cap B(x_n^j,\frac{n-2r}{2})=\emptyset$  for $i\neq j$ and
\begin{eqnarray}\label{Eqnxi}
&&\cI\Big(\sum_{i=0}^n\bar{u}_i(\cdot-x_n^i),w(\bar{u}_0)+
\sum_{i=0}^nw_0(\bar{u}_i)(\cdot-x_n^i)\Big)=\\\nonumber
&&\int_{B(0,\frac{n-2r}{2})}-\frac{1}{2}V(x)\Big|\sum_{i=0}^n(\bar{u}_i+v_i)(\cdot-x_n^i))\Big|^2+F\Big(x,\sum_{i=0}^n(\bar{u}_i+v_i))(\cdot-x_n^i)\Big)\,dx\\\nonumber
&&+\int_{\bigcup_{j=1}^nB(x_n^j,\frac{n-2r}{2})}-\frac{1}{2}V(x)\Big|\sum_{i=0}^n(\bar{u}_i+v_i)(\cdot-x_n^i))\Big|^2\,dx
\\\nonumber
&&+\sum_{j=1}^n\int_{B(x_n^j,\frac{n-2r}{2})}F\Big(x,\sum_{i=0}^n(\bar{u}_i+v_i)(\cdot-x_n^i)\Big)\,dx\\\nonumber
&&+\int_{\R^3\setminus \bigcup_{j=0}^n B(x_n^j,\frac{n-2r}{2})}-\frac{1}{2}V(x)\Big|\sum_{i=0}^n(\bar{u}_i+v_i)(\cdot-x_n^i))\Big|^2+F\Big(x,\sum_{i=0}^n(\bar{u}_i+v_i)(\cdot-x_n^i)\Big)\,dx.
\end{eqnarray}
Moreover observe that for given $k\geq 0$
$$ \Big|\sum_{i=0}^k
(\bar{u}_i+v_i)(\cdot-x_n^i)\chi_{\R^3\setminus \bigcup_{j=0}^n B(x_n^j,\frac{n-2r}{2})}\Big|_{p,q}\leq 
\sum_{i=0}^k\Big|\
(\bar{u}_i+v_i)\chi_{\R^3\setminus B(0,\frac{n-2r}{2})}\Big|_{p,q}\to 0
$$
as $n\to\infty$. Therefore, up to a subsequence, 
$$\sum_{i=0}^n
(\bar{u}_i+v_i)(\cdot-x_n^i)\chi_{\R^3\setminus \bigcup_{j=0}^n B(x_n^j,\frac{n-2r}{2})}\to 0 \hbox{ in } L^{p,q}$$
and from (\ref{LemClaim2}), \eqref{eq:Lem_estimation_important},  (\ref{Eqnxi}) we obtain (\ref{Ineq15}).\\
{\it Claim 5.} Up to a subsequence we have
\begin{eqnarray}\label{Claim4_1}
&&\nabla w(u_n)\chi_{B(0,\frac{n-2r}{2})}\to \nabla w(\bar{u_0})\hbox{ in }L^{p,q},\\\label{Claim4_2}
&&\nabla w_0(u_n)(\cdot+x_n^i)\chi_{B(0,\frac{n-2r}{2})}\to \nabla w_0(\bar{u_i})\hbox{ in }L^{p,q},
\end{eqnarray}
as $n\to\infty$. Moreover $(h)$ holds.\\
Let $\eps>0$, $k\in\N$. Then for sufficiently large $n$
\begin{eqnarray*}
&&\hspace{-7mm}\eps+
\cI\Big(\sum_{i=0}^n\bar{u}_i(\cdot-x_n^i),
w(\bar{u}_0)+\sum_{i=1}^nw_0(\bar{u}_i)(\cdot-x_n^i)\Big)\\
&\geq&
\cI\Big(u_n,w(\bar{u}_0)+\sum_{i=1}^nw_0(\bar{u}_i)(\cdot-x_n^i)\Big)
\geq 
\cI(u_n,w(u_n))\\
&\geq&\sum_{i=0}^k \cI(u_n\chi_{B(x_n^i,\frac{n-2r}{2})},w(u_n)\chi_{B(x_n^i,\frac{n-2r}{2})})\\
&\geq& -\eps+\sum_{i=0}^k \liminf_{n\to\infty}\cI(u_n\chi_{B(x_n^i,\frac{n-2r}{2})},w(u_n)\chi_{B(x_n^i,\frac{n-2r}{2})})\\
&\geq& -\eps+\liminf_{n\to\infty}\cI(u_n\chi_{B(0,\frac{n-2r}{2})},w(u_n)\chi_{B(0,\frac{n-2r}{2})})\\
&&+\sum_{i=1}^k \liminf_{n\to\infty}\cI_0(u_n\chi_{B(x_n^i,\frac{n-2r}{2})},w(u_n)\chi_{B(x_n^i,\frac{n-2r}{2})})
\\
\end{eqnarray*}
\begin{eqnarray*}
%&=&-\eps+\liminf_{n\to\infty}\cI(u_n\chi_{B(0,\frac{n-2r}{2})},w(u_n)\chi_{B(0,\frac{n-2r}{2})})\\
%&&+\sum_{i=1}^k \liminf_{n\to\infty}\cI_0(u_n(\cdot+x_n^i)\chi_{B(0,\frac{n-2r}{2})},w(u_n)(\cdot+x_n^i)\chi_{B(0,\frac{n-2r}{2})})
%\end{eqnarray*}
%\begin{eqnarray*}
&\geq& -\eps+\liminf_{n\to\infty}\cI(u_n\chi_{B(0,\frac{n-2r}{2})},w(u_n)\chi_{B(0,\frac{n-2r}{2})})\\
&&+\sum_{i=1}^k \liminf_{n\to\infty}\cI_0(u_n(\cdot+x_n^i)\chi_{B(0,\frac{n-2r}{2})},w(u_n)(\cdot+x_n^i)\chi_{B(0,\frac{n-2r}{2})})\\
&\geq& -\eps+\cI(\bar{u}_0,\bar{w}_0)+\sum_{i=1}^k \cI_0(\bar{u}_i,\bar{w}_i)\\
&\geq& -\eps+\cI(\bar{u}_0,w(\bar{u_0}))+\sum_{i=1}^k \cI_0(\bar{u}_i,w_0(\bar{u_i})).
\end{eqnarray*}
Thus taking into account (\ref{Ineq15}) we see that $(h)$ holds and we get
\begin{eqnarray*}
&&\liminf_{n\to\infty}\cI(u_n\chi_{B(0,\frac{n-2r}{2})},w(u_n)\chi_{B(0,\frac{n-2r}{2})})=\cI(\bar{u}_0,w(\bar{u_0})),\\
&&\liminf_{n\to\infty} \cI_0(u_n(\cdot+x_n^i)\chi_{B(0,\frac{n-2r}{2})},w(u_n)(\cdot+x_n^i)\chi_{B(0,\frac{n-2r}{2})})=\cI_0(\bar{u}_i,w_0(\bar{u_i})).
\end{eqnarray*}
Passing to a subsequence if necessary, by Lemma \ref{LemConvWeakIpliesStrong} we obtain
\begin{eqnarray*}\label{EqlocConvW}
&&u_n\chi_{B(0,\frac{n-2r}{2})}+\nabla w(u_n)\chi_{B(0,\frac{n-2r}{2})}\to \bar{u}_0+\nabla w(\bar{u_0})\hbox{ in }L^{p,q},\\
&&u_n(\cdot +x_n^i)\chi_{B(0,\frac{n-2r}{2})}+\nabla w_0(u_n)(\cdot+x_n^i)\chi_{B(0,\frac{n-2r}{2})}\to \bar{u}_i+\nabla w_0(\bar{u_i})\hbox{ in }L^{p,q}. 
\end{eqnarray*}
Therefore by (\ref{Eqstronglok_u_n}) we obtain (\ref{Claim4_1}) and (\ref{Claim4_2}).\\
{\it Claim 6.} $(g)$ holds.\\
From $(c)$ and $(f)$
we know that for any $i\geq 0$
$$u_n(x +x_n^i)\to \bar{u}_i(x),\, \nabla w(u_n)(x)\to \nabla w(\bar{u}_0)(x),\,\nabla w_0(u_n)(x+x_n^i)\to \nabla w_0(\bar{u}_i)(x)\hbox{ a.e. on }\R^3.$$
Replacing $F$ by $\bar{F}$ in Lemma \ref{LemBrezLieb}, where $\bar{F}(x,u)=-V(x)|u|^2+F(x,u)$, $x,u\in\R^3$, we obtain
$$\lim_{n\to\infty}(\cI(u_n,w(u_n))-\cI(u_n-\bar{u}_0,w(u_n)-w(\bar{u}_0)))=\cI(\bar{u}_0,w(\bar{u}_0)).
$$
Thus
$$\lim_{n\to\infty}\cI(u_n,w(u_n))=\cI(\bar{u}_0,w(\bar{u}_0))+\lim_{n\to\infty}\cI(u_n-\bar{u}_0,w(u_n)-w(\bar{u}_0)).$$
Let $E_n:=u_n+\nabla w(u_n)-\bar{u}_0-\nabla w(\bar{u}_0)$. Since the infimum in $|\cdot|_{p,q,1}$ is attained (see \cite{BadialePisaniRolando}[Prop. 2.5]), then there are $E_n^1\in L^p(\R^3,\R^3)$, $E_n^2\in L^{q}(\R^3,\R^3)$ such that $E_n\chi_{B(0,\frac{n-2r}{2})}=E_n^1+E_n^2$ and 
$$|E_n\chi_{B(0,\frac{n-2r}{2})}|_{p,q,1}=|E_n^1|_{p}+|E_n^2|_{q}.$$
Thus $E_n^1\to 0$ in $L^p(\R^3,\R^3)$ and $E_n^2\to 0$ in $L^q(\R^3,\R^3)$.
Observe that 
\begin{eqnarray*}
\int_{\R^3}|V(x)||E_n|^2\, dx&=& \int_{B(0,\frac{n-2r}{2})}|V(x)||E_n|^2\, dx+
\int_{B(0,\frac{n-2r}{2})^c}|V(x)||E_n|^2\, dx\\
&\leq&2\int_{B(0,\frac{n-2r}{2})}V(x)|E_n^1|^2\, dx
+2\int_{B(0,\frac{n-2r}{2})}V(x)|E_n^2|^2\, dx\\
&&+\int_{B(0,\frac{n-2r}{2})^c}V(x)|E_n|^2\, dx\\
&\leq&
2|V|_{\frac{p}{p-2}}|E_n^1|^2_{p}+2|V|_{\frac{q}{q-2}}|E_n^1|^2_{q}\\
&&+
|V\chi_{B(0,\frac{n-2r}{2})^c}|_{\frac{p}{p-2}} |E_n\chi_{\Omega_{E_n}}|^2_{p}+
|V\chi_{B(0,\frac{n-2r}{2})^c}|_{\frac{q}{q-2}}|E_n\chi_{\Omega_{E_n}^c}|^2_{q}
\end{eqnarray*}
Since $E_n$ is bounded in $L^{p,q}$ then
$$\int_{\R^3}|V(x)||E_n|^2\, dx\to 0$$
and thus
\begin{eqnarray*}\label{EqSplitC1}
\lim_{n\to\infty}\cI(u_n,w(u_n))&=&\cI(\bar{u}_0,w(\bar{u}_0))+\lim_{n\to\infty}\cI_0(u_n-\bar{u}_0,w(u_n)-w(\bar{u}_0))\\
&=&
\cI(\bar{u}_0,w(\bar{u}_0))+\lim_{n\to\infty}\cI_0(u_n^0,w_n^0),
\end{eqnarray*}
where $$u_n^j=u_n-\sum_{i=0}^j \bar{u}_i(\cdot -x_n^i)$$
and
$$w_n^j=w(u_n)-w(\bar{u}_0)-\sum_{i=1}^j w_0(\bar{u}_i)(\cdot -x_n^i)$$
for $n\in\N$, $0\leq j< N+1$.
Again by Lemma \ref{LemBrezLieb}
$$\lim_{n\to\infty}(\cI_0(u_n^0(\cdot +x_n^1),w_n^0(\cdot +x_n^1))
-\cI_0(u_n^1(\cdot +x_n^1),w_n^1(\cdot +x_n^1)))=\cI_0(\bar{u}_1,w_0(\bar{u}_1))
$$
and then
\begin{equation*}
\lim_{n\to\infty}\cI_0(u_n^0,w_n^0)=
\cI_0(\bar{u}_1,w_0(\bar{u}_1))+\lim_{n\to\infty}\cI_0(u_n^1,w_n^1).
\end{equation*}
Similarly we show for any $0\leq j<N$
$$\lim_{n\to\infty}(\cI_0(u_n^{j}(\cdot +x_n^{j+1}),w_n^j(\cdot +x_n^{j+1}))
-\cI_0(u_n^{j+1}(\cdot +x_n^{j+1}),w_n^{j+1}(\cdot +x_n^{j+1})))=\cI_0(\bar{u}_{j+1},w_0(\bar{u}_{j+1}))
$$
and then
\begin{equation*}
\lim_{n\to\infty}\cI_0(u_n^j,w_n^j)=
\cI_0(\bar{u}_{j+1},w_0(\bar{u}_{j+1}))+\lim_{n\to\infty}\cI_0(u_n^{j+1},w_n^{j+1}).
\end{equation*}
Thus we obtain
$$\lim_{n\to\infty}\cI(u_n,w(u_n))=\cI(\bar{u}_0,w(\bar{u}_0))+\sum_{i=1}^{j+1}\cI_0(\bar{u}_i,w(\bar{u}_i))+\lim_{n\to\infty}\cI_0(u_n^{j+1},w_n^{j+1})$$
for any  $0\leq j<N$.
If $N<\infty$ then owing to $(h)$ we get 
$$\lim_{n\to\infty}\cI_0(u_n^{N},w_n^{N})=0.$$
By $(d)$ we have $u_n^{N}\to 0$ in $L^{p,q}$. Hence by Lemma \ref{LemEstimate} $(b)$ we get $w_n^{N}\to 0$ in $L^{p,q}$ as well.
If $N=\infty$ then
$$\lim_{n\to\infty}\cI_0(u_n^{n},w_n^{n})=0$$
and thus $w_n^{n}\to 0$ in $L^{p,q}$ and we get $(g)$.
\end{proof}

\begin{altproof}{Theorem \ref{ThMainSplitting}}
Observe that by Lemma \ref{DefofW(u)} and Proposition \ref{PropDefOfm(u)}, if $(E_n)_{n=0}^{\infty}\subset \cM$ then
$E_n=u_n+\nabla w(u_n)$ for some $(u_n,w(u_n))\in \cN$. In view of Lemma \ref{LemSplit1} we get a sequence $(\bar{u}_i,w(\bar{u}_i))$, hence $\bar{E}_i:=\bar{u}_i+\nabla w(\bar{u}_i)\in  \D(\curl,p,q)\setminus\{0\}$ for $i\geq 1$. Moreover \eqref{EqThMainSplitting1}, \eqref{EqThMainSplitting2} and \eqref{eqInfinitesplitting} follows from Lemma \ref{LemSplit1} (b) - (h).
\end{altproof}

In general $\J'$ is not (sequentially) weak-to-weak$^*$ continuous. Indeed, take e.g. 
$F(x,u)=\frac1p((1+|u|^q)^{\frac{p}{q}}-1),$ and observe that
$\nabla w_n\weakto \nabla w$ in $L^{p,q}$ does not imply
$$(1+|\nabla w_n|^q)^{\frac{p}{q}}|\nabla w_n|^{q-2}(\nabla w_n)\weakto (1+|\nabla w|^q)^{\frac{p}{q}}|\nabla w|^{q-2}(\nabla w)$$ in $(L^{p,q})^*=L^{\frac{p}{p-1}}(\R^3,\R^3)\cap L^{\frac{q}{q-1}}(\R^3,\R^3)$. 
However we show the weak-to-weak$^*$ continuity of $\J'$ for sequences on the Nehari-Pankov manifold $\mathcal{N}$. Obviously the same regularity holds for $\cE$ and $\cM$.

\begin{Cor}\label{CorJweaklycont}
If $(u_n,w_n)\in\mathcal{N}$ and $(u_n,w_n)\rightharpoonup (u_0,w_0)$ in $\U\times\W$ then $\J'(u_n,w_n)\rightharpoonup \J'(u_0,w_0)$, i.e.
$$\J'(u_n,w_n)(\phi,\psi)\to \J'(u_0,w_0)(\phi,\psi)$$
for any $(\phi,\psi)\in\U\times\W$.
\end{Cor}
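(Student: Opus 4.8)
The plan is to pass to the limit term by term in the formula for $\J'(u_n,w_n)(\phi,\psi)$ displayed just before Proposition~\ref{PropSolutE}. Set $\Phi:=\phi+\nabla\psi\in L^{p,q}$ and $E_n:=u_n+\nabla w_n$, so that $E_n\in\cM$ and $\curlop E_n=\curlop u_n$. Since $(u_n,w_n)\rightharpoonup(u_0,w_0)$, the sequence is bounded, hence $(E_n)$ is bounded in $\D(\curl,p,q)$ and in $L^{p,q}$; moreover $u_n\rightharpoonup u_0$ in $\U$ gives $\curlop u_n\rightharpoonup\curlop u_0$ in $L^2$ (by Lemma~\ref{defof} the map $u\mapsto\curlop u$ is a linear isometry on $(\U,\|\cdot\|_\D)$), and through $\U\hookrightarrow L^6\hookrightarrow L^{p,q}$ also $u_n\rightharpoonup u_0$ in $L^{p,q}$, while $\nabla w_n\rightharpoonup\nabla w_0$ in $L^{p,q}$; hence $E_n\rightharpoonup E_0:=u_0+\nabla w_0$ in $L^{p,q}$. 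Consequently $\int_{\R^3}\langle\curlop u_n,\curlop\phi\rangle\,dx\to\int_{\R^3}\langle\curlop u_0,\curlop\phi\rangle\,dx$, and, since $V\Phi\in L^{\frac{p}{p-1}}\cap L^{\frac{q}{q-1}}=(L^{p,q})^*$ by Lemma~\ref{LemEstimate}$(a)$, also $\int_{\R^3}V(x)\langle E_n,\Phi\rangle\,dx\to\int_{\R^3}V(x)\langle E_0,\Phi\rangle\,dx$. Everything therefore reduces to showing that
\[
\int_{\R^3}\langle f(x,E_n),\Phi\rangle\,dx\longrightarrow\int_{\R^3}\langle f(x,E_0),\Phi\rangle\,dx .
\]

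For the nonlinear term I would first invoke Theorem~\ref{ThMainSplitting}: as $(E_n)\subset\cM$ is bounded, after passing to a subsequence \eqref{EqThMainSplitting1} holds with $x_n^0=0$, so $E_n\to\bar E_0$ a.e.\ on $\R^3$; by uniqueness of weak limits $\bar E_0=E_0$, hence $E_n\to E_0$ a.e.\ on $\R^3$. Next I would fix $\eps>0$ and split the integral over a large ball $B_\rho$ and its complement. By (F3) and Lemma~\ref{LemBenFor}, the quantity $|f(\cdot,E_n)|_{\frac{p}{p-1}}+|f(\cdot,E_n)|_{\frac{q}{q-1}}$ is bounded independently of $n$ and $f(\cdot,E_0)\in(L^{p,q})^*$; since $\Phi\in L^{p,q}$ one has $|\Phi\chi_{B_\rho^c}|_{p,q}\to0$ as $\rho\to\infty$, so for $\rho$ large the contributions of $B_\rho^c$ to both sides of the displayed identity are smaller than $\eps$, uniformly in $n$. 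On the fixed ball $B_\rho$ one has $f(x,E_n)\to f(x,E_0)$ a.e.\ by (F1), and the family $(\langle f(x,E_n),\Phi\rangle)_n$ is uniformly integrable --- this follows from (F3), Lemma~\ref{LemBenFor}$(a)$ and absolute continuity of the integral, exactly as in the proof of Lemma~\ref{LemBrezLieb} --- so the Vitali convergence theorem gives $\int_{B_\rho}\langle f(x,E_n),\Phi\rangle\,dx\to\int_{B_\rho}\langle f(x,E_0),\Phi\rangle\,dx$. Letting first $n\to\infty$ and then $\rho\to\infty$ proves the displayed convergence, whence $\J'(u_n,w_n)(\phi,\psi)\to\J'(u_0,w_0)(\phi,\psi)$ along the subsequence chosen above.

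It then remains to remove the subsequence: the limit $\J'(u_0,w_0)(\phi,\psi)$ does not depend on it, and the whole argument can be run starting from an arbitrary subsequence of the original one (still a bounded sequence in $\cM$ with the same weak limit), so a standard subsequence argument yields convergence for the full sequence and for every $(\phi,\psi)\in\U\times\W$. I expect the only genuine difficulty to be the passage to the limit in the nonlinear term: because $\J'$ (equivalently $\cE'$) fails to be weak-to-weak$^*$ continuous in general, as the example preceding the statement shows, this step cannot be purely soft and must use the almost everywhere convergence $E_n\to E_0$ delivered by Theorem~\ref{ThMainSplitting} --- which is precisely where the Nehari--Pankov structure of $\cM$ is exploited.
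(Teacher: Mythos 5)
Your proposal is correct and follows essentially the same route as the paper: from $(u_n,w_n)\in\cN$ one gets $w_n=w(u_n)$, so the a.e.\ convergence $E_n\to E_0$ delivered by the splitting result (the paper cites Lemma~\ref{LemSplit1}$(c)$,$(f)$ directly, you cite the equivalent Theorem~\ref{ThMainSplitting}) can be combined with the uniform $(L^{p,q})^*$ bounds on $f(\cdot,E_n)$ and Vitali's theorem to pass to the limit in the nonlinear term. Your version merely spells out the tightness/tail control via the ball decomposition, which the paper leaves implicit in its appeal to Vitali.
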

\begin{proof}
Observe that by Proposition \ref{Propuv_N}, Proposition \ref{PropDefOfm(u)} $(a)$ and Lemma \ref{LemDefofW} $(c)$ we get $w_n=w(u_n)$. In view of Lemma \ref{LemSplit1} $(c)$ and $(f)$ we may assume that  $u_n+\nabla w_n\to u_0+\nabla w_0$ a.e. on $\R^3$.
Observe that for $(\phi,\psi)\in\U\times\W$
\begin{eqnarray*}
\J'(u_n,w_n)(\phi,\psi)-\J'(u_0,w_0)(\phi,\psi)&=&\int_{\R^3}\langle \nabla u_n-\nabla u_0,\nabla \phi\rangle\,dx\\
&&+\int_{\R^3}V(x)\langle u_n+\nabla w_n-u_0-\nabla w_0,\phi+\nabla \psi\rangle\,dx\\
&&-\int_{\R^3}\langle f(x,u_n+\nabla w_n)-f(x,u_0+\nabla w_0),\phi+\nabla \psi\rangle\,dx. 
\end{eqnarray*}
In view of the Vitaly convergence theorem we obtain
$$\J'(u_n,w_n)(\phi,\psi)-\J'(u_0,w_0)(\phi,\psi)\to 0.$$
\end{proof}

\section{Analysis of Palais-Smale sequences in $\cN$}\label{SectAnalysisPS}

The following lemma implies that any Palais-Smale sequence of $\J$ in $\cN$ 
is bounded.

\begin{Lem}\label{LemCoercive}
$\J$ is coercive on $\mathcal{N}$.
\end{Lem}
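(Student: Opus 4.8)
The claim is that $\J$ is coercive on $\mathcal{N}$, i.e. $\J(u,w)\to\infty$ as $\|(u,w)\|\to\infty$ with $(u,w)\in\mathcal{N}$. Since every $(u,w)\in\mathcal{N}$ satisfies $w=w(u)$ (by Proposition \ref{Propuv_N}, Proposition \ref{PropDefOfm(u)}$(a)$ and Lemma \ref{LemDefofW}$(c)$) and $\J'(u,w)(u,w)=0$, the value $\J(u,w)$ can be rewritten using $I$: from $\J(u,w)=\tfrac12\|u\|_\D^2-\cI(u,w)$ and $0=\|u\|_\D^2-\langle\cI'(u,w),(u,w)\rangle$ we get
\begin{equation}\label{EqCoerciveRewrite}
\J(u,w)=\tfrac12\langle\cI'(u,w),(u,w)\rangle-\cI(u,w)
=-\tfrac12\int_{\R^3}V(x)|E|^2\,dx+\int_{\R^3}\Big(\tfrac12\langle f(x,E),E\rangle-F(x,E)\Big)\,dx,
\end{equation}
where $E=u+\nabla w$. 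By (F4) both terms on the right are nonnegative, and by (F3)/(F6)-type bounds the second integrand controls $\min(|E|^p,|E|^q)$ up to a constant. The plan is therefore to argue by contradiction: suppose $(u_n,w_n)\subset\mathcal{N}$ with $\|(u_n,w_n)\|\to\infty$ but $\J(u_n,w_n)\leq C$.

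\textbf{Step 1: the nonlinear term is bounded, hence $E_n=u_n+\nabla w_n$ is bounded in $L^{p,q}$.} From \eqref{EqCoerciveRewrite} and $\J(u_n,w_n)\leq C$ together with (F4), $\int_{\R^3}\big(\tfrac12\langle f(x,E_n),E_n\rangle-F(x,E_n)\big)\,dx\leq C$; using (F4) once more (which gives $\tfrac12\langle f(x,u),u\rangle-F(x,u)>0$ pointwise) and the lower bound in (F3) one obtains $\int_{\R^3}F(x,E_n)\,dx\leq C'$, and then Lemma \ref{LemEstimate}$(b)$ yields that $\{|E_n|_{p,q}\}$ is bounded. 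Consequently $\{|\Omega_{E_n}|\}$ and $\{|E_n\chi_{\Omega_{E_n}^c}|_q+|E_n\chi_{\Omega_{E_n}}|_p\}$ are bounded by Lemma \ref{LemBenFor}$(b)$.

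\textbf{Step 2: the divergence-free part $u_n$ is bounded in $\D(\R^3,\R^3)$.} Here I would use the definition of $\mathcal{N}$: since $\J'(u_n,w_n)(u_n,0)=0$ — which follows by combining $\J'(u_n,w_n)(u_n,w_n)=0$ with $\J'(u_n,w_n)(0,w_n)=0$ (the latter because $\J'(u_n,w_n)(0,\psi)=0$ for all $\psi\in\W$) — we get
\begin{equation*}
\|u_n\|_\D^2=\|\curlop u_n\|_2^2=-\int_{\R^3}V(x)\langle E_n,u_n\rangle\,dx+\int_{\R^3}\langle f(x,E_n),u_n\rangle\,dx.
\end{equation*}
The first term is estimated by Lemma \ref{LemEstimate}$(a)$ (with $F$ there replaced by $u_n$) by $C\,|u_n|_{p,q}\leq C'\|u_n\|_\D$ using the Sobolev embedding $\U\hookrightarrow L^{p,q}$ from Lemma \ref{defof}; the second term is estimated using $|f(x,E_n)|\leq c_2\min(|E_n|^{p-1},|E_n|^{q-1})$ from (F3), splitting over $\Omega_{E_n}$ and $\Omega_{E_n}^c$ and applying Hölder together with the bounds from Step 1, again against $|u_n|_{p,q}\leq C\|u_n\|_\D$. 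This gives $\|u_n\|_\D^2\leq C(1+\|u_n\|_\D)$, so $\{u_n\}$ is bounded in $\D(\R^3,\R^3)$.

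\textbf{Step 3: conclude.} By Lemma \ref{LemDefofW}$(b)$, $w_n=w(u_n)$ maps bounded sets to bounded sets, so $\{w_n\}$ is bounded in $\W$; hence $\{(u_n,w_n)\}$ is bounded in $\U\times\W$, contradicting $\|(u_n,w_n)\|\to\infty$. Therefore no such sequence exists and $\J$ is coercive on $\mathcal{N}$. \textbf{The main obstacle} is Step 2: one must exploit the extra relation $\J'(u_n,w_n)(u_n,0)=0$ (not just $\J'(u_n,w_n)(u_n,w_n)=0$) to isolate $\|u_n\|_\D^2$, and carefully track the $L^{p,q}$ versus $L^{6}$ estimates so that the right-hand side grows at most linearly in $\|u_n\|_\D$; the split of the $f$-term across the sets $\{|E_n|>1\}$ and $\{|E_n|\leq 1\}$, combined with the uniform bounds of Step 1 via Lemma \ref{LemBenFor}, is what makes this work.
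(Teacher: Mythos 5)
There are two issues, one minor and one fatal.

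\textbf{Minor error in the identity.} On $\mathcal{N}$ the Nehari constraint $\J'(u,w)(u,w)=0$ reads $\|u\|_\D^2 + \int V|E|^2 = \int\langle f(x,E),E\rangle$. Substituting into $\J(u,w)=\tfrac12\|u\|_\D^2+\tfrac12\int V|E|^2-\int F$ makes the $V$-term \emph{cancel}: the correct identity is
$\J(u,w)=\int_{\R^3}\bigl(\tfrac12\langle f(x,E),E\rangle-F(x,E)\bigr)\,dx$,
with no $-\tfrac12\int V|E|^2$ left over. Your display in \eqref{EqCoerciveRewrite} has an extra (nonnegative) $V$-term; it happens to be harmless for the direction of the inequality, but it is not the right formula.

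\textbf{Fatal gap in Step 1.} From $\int\bigl(\tfrac12\langle f(x,E_n),E_n\rangle-F(x,E_n)\bigr)\,dx\le C$ you conclude $\int F(x,E_n)\,dx\le C'$, citing (F4) and (F3). This does not follow. Assumption (F4) only gives the \emph{strict pointwise} inequality $\tfrac12\langle f(x,u),u\rangle-F(x,u)>0$; it provides no uniform lower bound of the form $\tfrac12\langle f(x,u),u\rangle-F(x,u)\ge\delta\,F(x,u)$. The step you need is precisely what (F6) gives (namely $\langle f(x,u),u\rangle\ge p\,F(x,u)$ with $p>2$, so $\tfrac12\langle f,u\rangle-F\ge(\tfrac p2-1)F$), and the inequality $F\ge c_1\min(|u|^p,|u|^q)$ from (F3) plays no role in bridging this gap. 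But (F6) is \emph{not} one of the standing hypotheses for this lemma --- the paper proves coercivity under (F1)--(F5), (V) only, and (F6) is introduced later only for the nonexistence Corollary. Without the $L^{p,q}$-bound on $E_n$ from Step~1, your Steps 2 and 3 never get off the ground, so the whole argument collapses at this point.

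\textbf{How the paper avoids this.} The paper's proof takes a completely different route that never tries to bound $\int F(x,E_n)\,dx$ directly. Arguing by contradiction, it rescales $\bar u_n=u_n/\|(u_n,w_n)\|$ and applies a Lions-type vanishing/non-vanishing dichotomy (via \cite{DAprileSiciliano}) to $(\bar u_n)$. In the vanishing case, Proposition~\ref{Propuv_N} (that $(u_n,w_n)$ maximizes $\J$ on the half-plane $\A(u_n)$) gives $M\ge\J(s\bar u_n,0)\to\tfrac{s^2}{2}\limsup\|\bar u_n\|_\D^2$, and a separate estimate using Lemma~\ref{LemEstimate}(b), the projection bounds \eqref{ineqPS1}--\eqref{ineqPS2}, and $\inf_\mathcal{N}\J=c>0$ forces $\inf_n\|\bar u_n\|_\D>0$, which contradicts this as $s\to\infty$. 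In the non-vanishing case it translates to a ball where $\bar u_n$ does not vanish, observes $|u_n(\cdot+x_n)|\to\infty$ there, and uses Fatou's lemma together with the lower growth bound in (F3) and periodicity (F1) to force $\J(u_n,w_n)/\|(u_n,w_n)\|^2\to\infty$, again a contradiction. Your approach, if (F6) were assumed, would indeed be simpler and more standard; under the actual hypotheses of the lemma it does not work, and the Lions/Fatou argument is what the paper uses instead.
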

\begin{proof}
Suppose that $(u_n,w_n)\in\mathcal{N}$, $\|(u_n,w_n)\|\to\infty$ as $n\to\infty$ and  $\J(u_n,w_n)\leq M$ for some constant $M>0$. Let 
$$\bar{u}_n:=\frac{u_n}{\|(u_n,w_n)\|}.$$
In view of Lemma \ref{LemSplit1} $(c)$ we may assume that $\bar{u}_n\rightharpoonup \bar{u}_0$ in $\U$ and $\bar{u}_n\to \bar{u}_0$ a.e. in $\R^3$. Moreover there is a sequence $(x_n)_{n\in\N}\subset\R^3$ such that
\begin{equation}\label{EqLemLions}
\liminf_{n\to\infty}\int_{B(x_n,1)}|\bar{u}_n|^2\, dx >0.
\end{equation}
Otherwise, in view of \cite{DAprileSiciliano}[Lemma 4.1]) we get that $\bar{u}_n\to 0$ in $L^{p,q}$. By the continuity of $I_0$ 
$$\int_{\R^N}F(x,s\bar{u}_n)\,dx\to 0$$ 
for any $s\geq0$. Let us fix $s\geq0$. By Proposition \ref{Propuv_N}
\begin{equation}\label{EqIneq2}
M\geq \limsup_{n\to\infty}\J(u_n,w_n)\geq \limsup_{n\to\infty} \J(s\bar{u}_n,0)= \frac{s^2}{2}\limsup_{n\to\infty}\|\bar{u}_n\|^2_{\D}.
\end{equation}
In view of Lemma \ref{LemEstimate} $(b)$ and Proposition \ref{PropDefOfm(u)} $(b)$ we have
$$\frac{1}{2}\|u_n\|^2_\D-c_1\min\{|u_n+\nabla w_n|_{p,q}^p,|u_n+\nabla w_n|_{p,q}^q\}\geq \J(u_n,w_n)\geq c:=\inf_{\mathcal{N}}\J>0.$$
Moreover by Lemma \ref{HelmholzDec} there are continuous projections of $\cl\U\oplus\nabla\W$ onto $\nabla\W$ and onto $\U$ in $L^{p,q}$. Hence there is a constant $C_1\in(0,1)$ such that
\begin{eqnarray}\label{ineqPS1}
C_1|\nabla w_n|_{p,q}\leq |u_n+\nabla w_n|_{p,q},\\
C_1|u_n|_{p,q}\leq |u_n+\nabla w_n|_{p,q}\label{ineqPS2}
\end{eqnarray}
for every $n$. Then 
\begin{eqnarray*}
2\|u_n\|^2_\D&\geq& \|u_n\|^2_\D+2c+2c_1\min\{|u_n+\nabla w_n|_{p,q}^p,|u_n+\nabla w_n|_{p,q}^q\}\\
&\geq& \|u_n\|^2_\D+2c+2c_1C_1^q\min\{|\nabla w_n|_{p,q}^p,|\nabla w_n|_{p,q}^q\}
\end{eqnarray*}
If $\liminf_{n\to\infty}|\nabla w_n|_{p,q}=0$ then, up to a subsequence, $|\nabla w_n|_{p,q}\to 0$, and for sufficiently large $n$ we get
\begin{eqnarray*}
2\|u_n\|^2_\D&\geq& 
 \|u_n\|^2_\D+|\nabla w_n|_{p,q}^2=\|(u_n,w_n)\|^2.
\end{eqnarray*}
If $\liminf_{n\to\infty}|\nabla w_n|_{p,q}>0$ then there is  $C_2\in (0,1)$ 
such that for sufficiently large $n$
\begin{eqnarray*}
2\|u_n\|^2_\D
&\geq& C_2(\|u_n\|^2_\D+|\nabla w_n|_{p,q}^2)=C_2\|(u_n,w_n)\|^2.
\end{eqnarray*}
Therefore, passing to a subsequence if necessary, 
$$\inf_{n\in\N}\|\bar{u}_n\|^2_\D=\inf_{n\in\N}\frac{\|u_n\|_\D^2}{\|(u_n,w_n)\|^2}>0$$
and by (\ref{EqIneq2})
$$M\geq \frac{s^2}{2}\inf_{n\in\N}\|\bar{u}_n\|^2_\D$$
for any $s\geq0$. The obtained contradiction shows that (\ref{EqLemLions}) holds. Then we may assume that $(x_n)\subset\Z^3$ and
$$\liminf_{n\to\infty}\int_{B(0,r)}|\bar{u}_n(x+x_n)|^2\, dx >0$$
for some $r>1$, hence $\bar{u}_n(\cdot+x_n)\to \bar{u}_0$ in $L^2_{loc}(\R^N)$ for some $\bar{u}_0\neq 0$. Take any bounded $\Om\subset\R^3$ of positive measure such that
$$\Om\subset \{x\in\R^3|\;\bar{u}_0(x)\neq 0\}.$$
Observe that for any $x\in\Om$
$$|u_n(x+x_n)|=|\bar{u}_n(x+x_n)|\cdot \|(u_n,w_n)\|\to\infty$$
and by Fatou's lemma
\begin{equation}\label{ineqPS3}
\int_{\Om}\frac{|u_n(x+x_n)|^p}{\|(u_n,w_n)\|^2}\,dx=
\int_{\Om}|u_n(x+x_n)|^{p-2}|\bar{u}_n(x+x_n)|^2\,dx\to\infty
\end{equation}
as $n\to\infty$.
Since norms $|\cdot|_{p,q}$ and $|\cdot|_{p}$ are equivalent on $L^p(\Om,\R^3)$ (see \cite{BadialePisaniRolando}[Corollary 2.15]), then  
the periodicity of $F$ in $x$, Lemma \ref{LemEstimate} $(b)$ and \eqref{ineqPS2}  imply
\begin{eqnarray*}
\frac{\J(u_n,w_n)}{\|(u_n,w_n)\|^2}&\leq&\frac{1}{2}\|\bar{u}_n\|^2_\D
-\frac{\int_{\R^3}F(x,u_n(x+x_n)+\nabla w_n(x+x_n))\,dx}{\|(u_n,w_n)\|^2}\\
&\leq&\frac{1}{2}\|\bar{u}_n\|^2_\D
-c_1 \frac{\min\{C_1^p|u_n(\cdot+x_n)\chi_\Om|_{p,q}^p, C_1^q|u_n(\cdot+x_n)\chi_\Om|_{p,q}^q\}}{\|(u_n,w_n)\|^2}\\
&\leq&\frac{1}{2}\|\bar{u}_n\|^2_\D
-C_3 \min\Big\{\frac{|u_n(\cdot+x_n)\chi_\Om|_{p}^p}{\|(u_n,w_n)\|^2}, \frac{|u_n(\cdot+x_n)\chi_\Om|_{p}^q}{\|(u_n,w_n)\|^2}\Big\}
\end{eqnarray*}
fore some constant $C_3>0$.
Thus by \eqref{ineqPS3} we get 
$$\frac{\J(u_n,w_n)}{\|(u_n,w_n)\|^2}\to\infty$$
as $n\to\infty$ and the obtained contradiction completes proof.
\end{proof}

\begin{Lem}\label{LemV_ynto0}
If $E\in L^{p,q}$ and $x_n\in\R^3$ is such that
$|x_n|\to+\infty$ as $n\to +\infty$, then
$$\lim_{n\to\infty}\int_{\R^3} V(x+x_n)|E|^2\, dx=0.$$
\end{Lem}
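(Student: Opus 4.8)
The plan is to reduce the statement to two weighted $L^1$ estimates against the pieces of $E$ produced by the decomposition in Lemma~\ref{LemBenFor}, and then exploit the integrability of $V$ via a two-scale truncation. Write $V_n(x):=V(x+x_n)$ and split $E=E\chi_{\Omega_E}+E\chi_{\Omega_E^c}$ with $\Omega_E=\{x\in\R^3:\ |E(x)|>1\}$. Since $E\in L^{p,q}$, Lemma~\ref{LemBenFor}$(a)$ gives $E\chi_{\Omega_E}\in L^p(\R^3,\R^3)$ and $E\chi_{\Omega_E^c}\in L^q(\R^3,\R^3)$, hence $g_1:=|E\chi_{\Omega_E}|^2\in L^{p/2}(\R^3)$ and $g_2:=|E\chi_{\Omega_E^c}|^2\in L^{q/2}(\R^3)$; note $p/2,q/2>1$. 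Since $|E|^2=g_1+g_2$, it suffices to prove $\int_{\R^3}|V_n|\,g_j\,dx\to 0$ for $j=1,2$. The point of the splitting is that $p/(p-2)$ and $q/(q-2)$ are exactly the Hölder conjugates of $p/2$ and $q/2$, and both exponents are covered by (V).

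Next I would fix $\eps>0$ and choose $R>0$ large enough that all four tails
$$|g_1\chi_{B(0,R)^c}|_{p/2},\quad |g_2\chi_{B(0,R)^c}|_{q/2},\quad |V\chi_{B(0,R)^c}|_{p/(p-2)},\quad |V\chi_{B(0,R)^c}|_{q/(q-2)}$$
are $<\eps$; this is possible precisely because $g_1\in L^{p/2}$, $g_2\in L^{q/2}$ and $V\in L^{p/(p-2)}\cap L^{q/(q-2)}$. Now let $n$ be so large that $|x_n|>2R$ — this is where the hypothesis $|x_n|\to\infty$ enters. For $x\in B(0,R)$ one then has $|x+x_n|\ge|x_n|-|x|>R$, so on that ball $V_n$ coincides with $V(\cdot+x_n)\chi_{B(0,R)^c}(\cdot+x_n)$. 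Splitting the integral over $B(0,R)$ and its complement and using Hölder on each piece (together with translation invariance of the $L^{p/(p-2)}$ norm) gives
$$\int_{\R^3}|V_n|\,g_1\,dx\ \le\ |V\chi_{B(0,R)^c}|_{p/(p-2)}\,|g_1|_{p/2}\ +\ |V|_{p/(p-2)}\,|g_1\chi_{B(0,R)^c}|_{p/2}\ \le\ \eps\big(|g_1|_{p/2}+|V|_{p/(p-2)}\big),$$
and the analogous estimate with $q$ in place of $p$ bounds $\int_{\R^3}|V_n|\,g_2\,dx$. Adding these, $\int_{\R^3}|V_n|\,|E|^2\,dx$ is at most a fixed ($n$-independent) constant times $\eps$ for all large $n$, and letting $\eps\to0$ concludes.

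The argument is essentially routine; the only thing requiring care — and the closest there is to an obstacle — is that one cannot invoke pointwise decay of $V_n$, nor is $|E|^2$ globally integrable against $V$, so the two cut-offs must be arranged in the correct order: the radius $R$ is chosen first, depending only on $\eps$, $E$ and $V$, and only afterwards is $n$ sent to infinity, the translation $x_n$ decoupling the tail of $E$ (inside $B(0,R)$) from the tail of $V$ (around $x+x_n$). The boundedness of $\{x_n\}$-translations in the relevant norms and the fact that $p/(p-2),q/(q-2)$ fall within the range assumed in (V) are the only structural inputs used.
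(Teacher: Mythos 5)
Your proof is correct and follows essentially the same route as the paper's: both split the integral over $B(0,R)$ and its complement, decompose $E$ via $\Omega_E$ and $\Omega_E^c$ as in Lemma~\ref{LemBenFor}, and apply H\"older with the conjugate pairs $(p/2,\,p/(p-2))$ and $(q/2,\,q/(q-2))$, using the tail of $V$ for the translated piece and the tails of $E$ for the far-field piece. The only difference is cosmetic — you fix $\eps$ and $R$ up front and then send $n\to\infty$, while the paper takes $\limsup_{n\to\infty}$ at fixed $R$ and then lets $R\to\infty$.
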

\begin{proof}
Observe that
\begin{eqnarray*}
\int_{\R^3}|V(x+x_n)||E|^2\, dx
&=&\int_{B(0,R)}|V(x+x_n)||E|^2\, dx
+\int_{B(0,R)^c}|V(x+x_n)||E|^2\, dx\\
&\leq&
\Big(\int_{B(x_n,R)}|V(x)|^{\frac{q}{q-2}}\, dx\Big)^{\frac{q-2}{q}} |E\chi_{\Omega_{E}^c}|^2_{q}\\
&&+\Big(\int_{B(x_n,R)}|V(x)|^{\frac{p}{p-2}}\, dx\Big)^{\frac{p-2}{p}} |E\chi_{\Omega_{E}}|^2_{p}\\
&&+
|V|_{\frac{q}{q-2}}|E\chi_{\Omega_{E}^c\cap {B(0,R)^c}}|^2_{q}
+|V|_{\frac{p}{p-2}} |E\chi_{\Omega_{E\cap {B(0,R)^c}}}|^2_{p}.
\end{eqnarray*}
for any $R>0$.
Therefore
$$\lim_{n\to\infty}
\int_{\R^3}|V(x+x_n)||E|\, dx\leq 
(|V|_{\frac{q}{q-2}}+|V|_{\frac{p}{p-2}})(|E\chi_{\Omega_{E\cap {B(0,R)^c}}}|^2_{q}+|E\chi_{\Omega_{E\cap {B(0,R)^c}}}|^2_{p})$$
and we get the conclusion by taking $R\to+\infty$.
\end{proof}

\begin{Lem}\label{LemSplit2}
Let 
$\J_0:\U\times\W\to\R$ be the functional given by
\begin{equation}\label{eqJ_0}
\J_0(u,w)=\frac{1}{2}\int_{\R^3}|\nabla u|^2\,dx 
-\int_{\R^3}F(x,u+\nabla w)\,dx.
\end{equation}
for $(u,w)\in\U\times\W$.
Let $(u_n,w_n)\in\mathcal{N}$ be a $(PS)_c$-sequence for some $c>0$.
Then there is $N\geq 0$ and there are sequences 
$(\bar{u}_i,\bar{w}_i)_{i=0}^N\subset\U\times \W$ and $(x_n^i)_{0\leq i\leq N,n\geq i}\subset \Z^3$ such that $x_n^0=0$ and, up to a sequence,
\begin{eqnarray}\label{EqSplit1}
&& \J'(\bar{u}_0,\bar{w}_0)=0,\\\label{EqSplit2}
&& \J'_0(\bar{u}_i,\bar{w}_i)=0\text{ for } i=1,...,N,\\\label{EqSplit3}
&& \bar{u}_i\neq 0\text{ for } i=1,...,N,\\\label{EqSplit4}
&& u_n-\sum_{i=0}^N \bar{u}_i(\cdot -x_n^i)\to0 \hbox{ in }\D(\R^3,\R^3)
\\\label{EqSplit5}
&& w_n-\sum_{i=0}^N \bar{w}_i(\cdot -x_n^i)\to0 \hbox{ in }\W
\\\label{EqSplit6}
&& \J(\bar{u}_0,\bar{w}_0)+\sum_{i=1}^N
\J_0(\bar{u}_i,\bar{w}_i)=c.
\end{eqnarray}
\end{Lem}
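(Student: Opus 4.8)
The statement is the analogue of Theorem~\ref{ThMainSplitting} for Palais--Smale sequences, and the plan is to run the concentration--compactness/splitting machinery of Lemma~\ref{LemSplit1} while tracking the condition $\J'(u_n,w_n)\to 0$ so that each "bubble'' $(\bar u_i,\bar w_i)$ is an honest critical point and the process terminates after finitely many steps. First I would note that by Lemma~\ref{LemCoercive} the sequence $(u_n,w_n)$ is bounded in $\U\times\W$, and by Proposition~\ref{Propuv_N}, Proposition~\ref{PropDefOfm(u)}$(a)$ and Lemma~\ref{LemDefofW}$(c)$ we have $w_n=w(u_n)$. Hence Lemma~\ref{LemSplit1} applies to $(u_n)$ and produces $N\in\N\cup\{\infty\}$, profiles $(\bar u_i)$, translations $(x_n^i)\subset\Z^3$, and the weak/local/a.e.\ convergences $(b)$--$(g)$ together with the (a priori only infinite) energy splitting $(h)$. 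Writing $\bar w_i:=w(\bar u_0)$ for $i=0$ and $\bar w_i:=w_0(\bar u_i)$ for $i\geq 1$, the convergences \eqref{EqThMainSplitting1}, \eqref{EqThMainSplitting2} are immediate from Lemma~\ref{LemSplit1}.

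\emph{Each profile is a critical point.} For $\bar u_0$: from $(c)$, $(f)$ of Lemma~\ref{LemSplit1} we have $u_n+\nabla w_n\to u_0+\nabla w_0$ a.e., and since $\J'(u_n,w_n)\to0$, the weak-to-weak$^*$ continuity on $\cN$ (Corollary~\ref{CorJweaklycont}) gives $\J'(\bar u_0,\bar w_0)=0$, which is \eqref{EqSplit1}. For $i\geq1$ one translates: set $v_n^i:=u_n(\cdot+x_n^i)$ and $z_n^i:=w(u_n)(\cdot+x_n^i)$; by $(b)$, $(e)$ these converge weakly to $\bar u_i$, $\bar w_i$, and by $(c)$, $(f)$ also a.e. Since $|x_n^i|\to\infty$, the periodicity of $f$ in (F1) together with Lemma~\ref{LemV_ynto0} shows that for test functions $(\phi,\psi)$ the potential term $\int V(x+x_n^i)\langle\cdots\rangle$ vanishes in the limit, so the equation $\J'(u_n,w_n)\to0$ translated and passed to the limit becomes $\J_0'(\bar u_i,\bar w_i)=0$; this is \eqref{EqSplit2}, and \eqref{EqSplit3} is Claim~1 of Lemma~\ref{LemSplit1}.

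\emph{Finiteness of $N$ and the strong splitting.} This is the main obstacle. One must rule out $N=\infty$ and upgrade the weak/local convergences to the strong convergences \eqref{EqSplit4}, \eqref{EqSplit5}. The key point is that every nontrivial critical point of $\J_0$ lies in $\cM_0$ and has $\J_0$-energy bounded below by $\inf_{\cM_0}\J_0>0$ (by Proposition~\ref{Propuv_N}/Proposition~\ref{PropDefOfm(u)} applied to $\J_0$, which satisfies the same structural hypotheses). Meanwhile, since $(u_n,w_n)$ is a $(PS)_c$-sequence, a Brezis--Lieb-type bookkeeping (Lemma~\ref{LemBrezLieb}) applied along the finite sums $u_n^j=u_n-\sum_{i\leq j}\bar u_i(\cdot-x_n^i)$ shows that each extraction strictly lowers the residual energy by $\J_0(\bar u_{j},\bar w_{j})\geq\inf_{\cM_0}\J_0$, while the residuals stay nonnegative and bounded above by $c+o(1)$; hence only finitely many bubbles can be split off, so $N<\infty$. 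Once $N<\infty$, the residual $(u_n^N,w_n^N)$ has $\J_0$-energy tending to $0$ and is itself (asymptotically) a vanishing sequence; by the concentration dichotomy in \cite{DAprileSiciliano}[Lem.~4.1] it does not concentrate, so $u_n^N\to0$ in $L^{p,q}$, hence in $\D(\R^3,\R^3)$ using the Helmholtz splitting and Lemma~\ref{LemEstimate}$(c)$, giving \eqref{EqSplit4}; then Lemma~\ref{LemEstimate}$(b)$ forces $\nabla w_n^N\to0$ in $L^{p,q}$, i.e.\ \eqref{EqSplit5}. Finally \eqref{EqSplit6} follows: specializing Lemma~\ref{LemSplit1}$(h)$ with $N<\infty$ gives $\lim_n I(E_n)=I(\bar E_0)+\sum_{i=1}^N I_0(\bar E_i)$, and combining this with $\J(u_n,w_n)=\tfrac12\|u_n\|_\D^2-\cI(u_n,w_n)\to c$, the relation $\tfrac12\|u_n\|_\D^2\to\tfrac12\|\bar u_0\|_\D^2+\sum_i\tfrac12\|\bar u_i\|_\D^2$ from \eqref{EqSplit4}, and the identity $\J=\tfrac12\|\cdot\|_\D^2-\cI$ (resp.\ $\J_0$) yields $\J(\bar u_0,\bar w_0)+\sum_{i=1}^N\J_0(\bar u_i,\bar w_i)=c$, which is \eqref{EqSplit6}.
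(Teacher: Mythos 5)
Your overall scaffolding matches the paper (bounded via Lemma \ref{LemCoercive}, feed $(u_n)$ to Lemma \ref{LemSplit1}, identify each bubble as a critical point via Corollary \ref{CorJweaklycont} and Lemma \ref{LemV_ynto0}), but the two places where the real work happens — finiteness of $N$ and the strong $\D$-convergence \eqref{EqSplit4} — are where your sketch either diverges from the paper or breaks down.

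\textbf{Finiteness of $N$.} You argue by energy counting: each bubble costs at least $\inf_{\cM_0}\J_0>0$ while ``the residuals stay nonnegative and bounded above by $c+o(1)$.'' The nonnegativity of $\lim_n\J_0(u_n^j,w_n^j)$ is exactly what you would have to prove, and it is not automatic: $\J_0(u_n^j,w_n^j)=\tfrac12\|u_n^j\|_\D^2-\cI_0(u_n^j,w_n^j)$ is a difference of two nonnegative quantities, and showing the residual level is $\geq o(1)$ typically requires first showing that $(u_n^j,w_n^j)$ is itself a $(PS)$-sequence for $\J_0$ and then invoking (F4); none of this is in your sketch. The paper sidesteps the energy-counting route entirely (Steps 3--4 of Lemma \ref{LemSplit2}): it first establishes a uniform $L^{p,q}$-lower bound $\inf_{i\geq1}|\bar u_i|_{p,q}>0$ via the energy bound $\J_0(\bar u_i,\bar w_i)\geq\inf_{\|u\|_\D=r}\J_0(u,0)>0$ and the summability \eqref{EqSeries}, and then observes that
$C\sum_{i=1}^k|\bar u_i|_{p,q}^6\leq\sum_{i=1}^k|\bar u_i|_6^6\leq\liminf_n|u_n|_6^6<\infty$
(using disjointness of $B(x_n^i,\tfrac{n-2}{2})$), which forces $N<\infty$ without ever touching the residual energy. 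Your energy-counting strategy is salvageable but needs the missing nonnegativity step; as written it is a gap.

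\textbf{Strong convergence \eqref{EqSplit4}.} You claim ``$u_n^N\to0$ in $L^{p,q}$, hence in $\D(\R^3,\R^3)$ using the Helmholtz splitting and Lemma \ref{LemEstimate}$(c)$.'' This implication is false: $L^{p,q}$-convergence of $u_n^N$ to $0$ gives no control on $|\nabla u_n^N|_2$, and neither Helmholtz nor Lemma \ref{LemEstimate}$(c)$ (which bounds $|\nabla E|_2^2$ from below, not above) turns $L^{p,q}$-smallness into $\D$-smallness. This is precisely where the Palais–Smale hypothesis must be used in a nontrivial way: the paper (Step 5) sets $v_n:=\sum_{i=0}^N\bar u_i(\cdot-x_n^i)$ and tests $\J'(u_n,w_n)(u_n-v_n,0)\to0$, writing
$\J'(u_n,w_n)(u_n-v_n,0)=\|u_n-v_n\|_\D^2+\int\langle\nabla v_n,\nabla(u_n-v_n)\rangle+\int V\langle u_n+\nabla w_n,u_n-v_n\rangle-\int\langle f(x,u_n+\nabla w_n),u_n-v_n\rangle$,
and shows the last three terms vanish (using $u_n-v_n\rightharpoonup0$ in $\D$ and $u_n-v_n\to0$ in $L^{p,q}$), which is what forces $\|u_n-v_n\|_\D\to0$. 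Without this step your argument does not produce \eqref{EqSplit4}, and consequently the $\D$-norm splitting $\lim_n\|u_n\|_\D^2=\sum_{i=0}^N\|\bar u_i\|_\D^2$ you then use for \eqref{EqSplit6} is also left unjustified.
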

\begin{proof}$\hbox{}$\\ 
{\it Step 1.} Construction of $(\bar{u}_i,\bar{w}_i)$, $(x_n^i)_{n\geq i}$ and proof of (\ref{EqSplit1}).\\
Since $(u_n,w_n)\in \mathcal{N}$ then by Proposition \ref{Propuv_N}, Proposition \ref{PropDefOfm(u)} $(a)$ and Lemma \ref{LemDefofW} 
$$m(u_n)=(u_n,w_n)\hbox{ and }w_n=w(u_n).$$
In view of Lemma \ref{LemCoercive} $(u_n,w_n)$ is bounded in $\U \times \W$. Thus we may assume that
$$u_n\rightharpoonup \bar{u}_0\hbox{ in }\D(\R^3,\R^3)\hbox{ and }\;\nabla w_n\rightharpoonup \nabla \bar{w}_0\hbox{ in }L^{p,q}.$$
In view of Lemma \ref{LemSplit1} there is $N\in\N\cup\{\infty\}$ and there exist sequences $(\bar{u}_i)_{i\in\N}\subset \D(\R^3,\R^3)$ and $(x_n^i)_{n\geq i}\subset \Z^3$ such that $x_n^0=0$ and, up to a subsequence, $(a)$ -- $(h)$ are satisfied.
We take
$$\bar{w}_0:=w_0(\bar{u}_0)$$
and
$$\bar{w}_i:=w_0(\bar{u}_i)$$
for $i\geq 1$.
In view of Corollary \ref{CorJweaklycont}
$$\J'(\bar{u}_0,\bar{w}_0)=0.$$
{\it Step 2.} $\J_0'(\bar{u}_i,\bar{w}_i)=0$ for $1\leq i<N+1$.\\ 
From $(b)$ and $(e)$ of Lemma \ref{LemSplit1} and arguing as in Corollary \ref{CorJweaklycont} we obtain
$$\J_0'(u_n(\cdot+x_n^i),w_n(\cdot+x_n^i))(\phi,\psi)\to \J_0'(\bar{u}_i,\bar{w}_i)(\phi,\psi)$$
for any $(\phi,\psi)\in \U\times\W$.
On the other hand 
\begin{eqnarray*}
&&|\J_0'(u_n(\cdot+x_n^i),w_n(\cdot+x_n^i))(\phi,\psi)|\leq
|\J'(u_n,w_n)(\phi(\cdot -x_n^i),\psi(\cdot -x_n^i))|\\
&&+
\int_{\R^3}|V(x)||\langle u_n+\nabla w_n,
\phi(\cdot -x_n^i)+\nabla \psi(\cdot -x_n^i)\rangle|\, dx\\
&&\leq
|\J'(u_n,w_n)(\phi(\cdot -x_n^i),\psi(\cdot -x_n^i))|\\
&&+\Big(\int_{\R^3}|V(x)||u_n+\nabla w_n|^2\,dx\Big)^{\frac{1}{2}}\cdot
\Big(\int_{\R^3}|V(x+y_n)||\phi+\nabla \psi|^2\,dx\Big)^{\frac{1}{2}}
\end{eqnarray*}
and by Lemma \ref{LemV_ynto0} we get 
$$\J_0'(u_n(\cdot+x_n^i),w_n(\cdot+x_n^i))(\psi,\phi)\to 0$$
for any $(\phi,\psi)\in \U\times\W$.
Hence
$$\J_0'(\bar{u}_i,\bar{w}_i)=0.$$
{\it Step 3.} $\inf_{1\leq i <N+1}|\bar{u}_{i}|_{p,q}>0$.\\
If $N<\infty$ then we conclude directly from Lemma \ref{LemSplit1} $(a)$. Assume that $N=\infty$ and let $i\geq 1$.
Similarly as in proof of Proposition \ref{PropDefOfm(u)} $(a)$ (see (A4)) we get 
$$\inf_{\|u\|_{\D}=r}\J_0(u,0)>0$$
for sufficiently small $r>0$. 
Since $\J_0'(\bar{u}_{i},\bar{w}_{i})=0$ and $\bar{u}_i\neq 0$
then $(\bar{u}_{i},\bar{w}_{i})\in\cN_0$, where $\mathcal{N}_0$ is given by $(\ref{DefOfN})$ under assumption $V=0$.
Assuming that $V=0$ in Proposition \ref{Propuv_N}  we show that
$$\J_0(\bar{u}_{i},\bar{w}_{i})\geq \J_0(t\bar{u}_{i},0)$$
for any $t\geq 0$.
Thus
\begin{equation}\label{EqJ_0geqc}
\J_0(\bar{u}_{i},\bar{w}_{i})\geq \J_0\Big(\frac{r}{\|\bar{u}_{i}\|_{\D}}\bar{u}_{i},0\Big)\geq \inf_{\|u\|_{\D}=r}\J_0(u,0)>0.
\end{equation}
Note that by (\ref{EqSeries}) $(\bar{u}_i+\nabla \bar{w}_i)_{i\geq 1}$ is bounded and
if, up to a subsequence $\bar{u}_i\to 0$ in $L^{p,q}$, 
then
\begin{eqnarray*}
\|\bar{u}_i\|_{\D}^2&=&\J'_0(\bar{u}_i,\bar{w}_i)(\bar{u}_i,0)
+\int_{\R^3}\langle f(x,\bar{u}_i+\nabla \bar{w}_i),\bar{u}_i\rangle\, dx
=\int_{\R^3}\langle f(x,\bar{u}_i+\nabla \bar{w}_i),\bar{u}_i\rangle\, dx\to 0
\end{eqnarray*}
as $i\to\infty$.
Furthermore
\begin{eqnarray*}
\limsup_{n\to\infty}\J_0(\bar{u}_{i},\bar{w}_{i})&=&
\limsup_{n\to\infty}\Big(-\int_{\R^3}F(x,\bar{u}_{i}+\nabla \bar{w}_{i})\,dx\Big)\leq 0
\end{eqnarray*}
which contradicts (\ref{EqJ_0geqc}). Therefore 
$$\inf_{i\geq 1}|\bar{u}_{i}|_{p,q}>0.$$
{\it Step 4.} $N<\infty$ and proof of (\ref{EqSplit2}), (\ref{EqSplit3}) and (\ref{EqSplit5}).\\
Observe that for some constant $C_1>0$ and for any $k\geq 1$ 
$$C_1\sum_{i=1}^k|\bar{u}_i|^6_{p,q}\leq 
\sum_{i=1}^k|\bar{u}_i|^6_{6}\leq 
\sum_{i=1}^k\liminf_{n\to\infty}|u_n(\cdot +x_n^i)\chi_{B(0,\frac{n-2r}{2})}|^6_{6}\leq \liminf_{n\to\infty}|u_n|_6^6$$
where the last inequalities follows from the fact that $B(x_n^i,\frac{n-2r}{2})\cap B(x_n^j,\frac{n-2r}{2})\neq \emptyset$ if $i\neq j$. Since $(u_n)$ is bounded in $L^6(\R^3,\R^3)$ and
taking into account {\it Step 3}
we obtain that $\bar{u}_i\neq 0$ for finitely many $i\geq 1$. Thus $N<\infty$ and (\ref{EqSplit2}), (\ref{EqSplit3}), (\ref{EqSplit5}) follow from {\it Step 2}, {\it Step 3} and Lemma \ref{LemSplit1} $(g)$.\\
{\it Step 5.} Proof of (\ref{EqSplit4}).\\
Let $v_n:=\sum_{i=0}^N \bar{u}_i(\cdot -x_n^i)$ and note that
$u_n-v_n\rightharpoonup 0$ in $\D(\R^3,\R^3)$ and $u_n-v_n\to 0$ in $L^{p,q}$.
Since
\begin{eqnarray*}
&&\J'(u_n,w_n)(u_n-v_n,0)=\|u_n-v_n\|_{\D}^2+\int_{\R^3}\langle \nabla v_n,\nabla u_n-\nabla v_n\rangle\, dx\\
&&+\int_{\R^3}V(x)\langle u_n+\nabla w_n, u_n-v_n\rangle \, dx
-\int_{\R^3}\langle f(x,u_n+\nabla w_n),u_n-v_n\rangle\, dx
\end{eqnarray*}
then $\|u_n-v_n\|_{\D}\to 0$.\\
{\it Step 6.} Proof of (\ref{EqSplit6}).\\
Since $N<\infty$ and Lemma \ref{LemSplit1} $(h)$ holds, then we need to prove the following convergence
\begin{equation}
\lim_{n\to\infty}\|u_n\|^2_{\D}= \sum_{i=0}^N\|\bar{u}_i\|^2_{\D}.
\end{equation}
Note that
\begin{eqnarray*}
&&\Big\|\sum_{i=0}^N\bar{u}_i(\cdot-x_n^i)\Big\|^2_{\D}=
\sum_{i=0}^N\|\bar{u}_i\|^2_{\D}+2\sum_{1\leq i<j\leq N}\int_{\R^3}\langle \bar{u}_i(\cdot-x_n^i),\bar{u}_j(\cdot-x_n^j)\rangle\,dx
\end{eqnarray*}
and
\begin{eqnarray*}
\int_{\R^3}|\langle \bar{u}_i(\cdot-x_n^i),\bar{u}_j(\cdot-x_n^j)\rangle|\,dx&=&\int_{B(0,R)}|\langle \bar{u}_i,\bar{u}_j(\cdot+x_n^i-x_n^j)\rangle|\,dx\\
&&+\int_{\R^3\setminus B(0,R)}|\langle \bar{u}_i,\bar{u}_j(\cdot+x_n^i-x_n^j)\rangle|\,dx\\
&\leq& \|\bar{u}_i\|_{\D}\|\bar{u}_j\chi_{B(x_n^i-x_n^j,R)}\|_{\D}+
\|\bar{u}_i\chi_{\R^3\setminus B(0,R)}\|_{\D}\|\bar{u}_j\|_{\D}
\end{eqnarray*}
for any $R>0$. If $i<j$ then 
$$\limsup_{n\to\infty}\int_{\R^3}|\langle \bar{u}_i(\cdot-x_n^i),\bar{u}_j(\cdot-x_n^j)\rangle|\,dx
\leq \|\bar{u}_i\chi_{\R^3\setminus B(0,R)}\|_{\D}\|\bar{u}_j\|_{\D}.$$
 If $R\to \infty$ then we obtain
$$\lim_{n\to\infty}\|u_n\|^2_{\D}=\lim_{n\to\infty}\Big\|\sum_{i=0}^N\bar{u}_i(\cdot-x_n^i)\Big\|^2_{\D}=
\sum_{i=0}^N\|\bar{u}_i\|^2_{\D}.$$
\end{proof}

\begin{altproof}{Theorem \ref{ThMainPS_Splitting}}
Proof follows directly from Lemma \ref{LemSplit2} by decomposing $E_n=u_n+\nabla w_n$, where $(u_n,w_n)\in\cN$ and by taking $\bar{E}_i=\bar{u}_i+\nabla \bar{w}_i$ for $0\leq i \leq N$.
\end{altproof}

\section{Proofs of Theorem \ref{ThMain} and Theorem \ref{ThPohozaev}}
\label{SectProofs}

Now we are ready to prove the existence and nonexistence results.

\begin{Prop}\label{PropCrit2}
There is a critical point $(u_0,w_0)\in\mathcal{N}_0$ of $\J_0$ such that $u_0\neq 0$ and
\begin{equation}\label{DefOfCritic0}
\J_0(u_0,w_0)=c_0:=\inf_{\mathcal{N}_0}\J_0>0.
\end{equation}
\end{Prop}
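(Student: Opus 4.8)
The plan is to run the abstract critical-point theory on the Nehari--Pankov manifold together with the global compactness splitting, both applied with $V=0$, and then close by comparing energy levels. First I would invoke Proposition~\ref{PropDefOfm(u)} with $V=0$: there the functional is $\J_0$ and the Nehari--Pankov manifold is $\cN_0$, and part~$(b)$ provides a $(PS)_{c_0}$-sequence $(u_n,w_n)\in\cN_0$ for $\J_0$, where $c_0=\inf_{\cN_0}\J_0>0$ (this also establishes the positivity of $c_0$ asserted in \eqref{DefOfCritic0}). By Lemma~\ref{LemCoercive} with $V=0$, $\J_0$ is coercive on $\cN_0$, so $(u_n,w_n)$ is bounded in $\U\times\W$.

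Next I would feed this bounded $(PS)_{c_0}$-sequence into Lemma~\ref{LemSplit2} with $V=0$. This yields $N\ge 0$, profiles $(\bar u_i,\bar w_i)_{i=0}^{N}\subset\U\times\W$ with $\bar w_i=w_0(\bar u_i)$, and translations $(x_n^i)\subset\Z^3$ with $x_n^0=0$, such that each $(\bar u_i,\bar w_i)$ is a critical point of $\J_0$, $\bar u_i\neq 0$ for $1\le i\le N$, and
\[
c_0=\J_0(\bar u_0,\bar w_0)+\sum_{i=1}^{N}\J_0(\bar u_i,\bar w_i).
\]
For $1\le i\le N$ the point $(\bar u_i,\bar w_i)$ is a nontrivial critical point of $\J_0$, hence it belongs to $\cN_0$ and $\J_0(\bar u_i,\bar w_i)\ge c_0>0$.

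It then remains to extract the ground state by a short case analysis. If $\bar u_0\neq 0$, then also $(\bar u_0,\bar w_0)\in\cN_0$ and $\J_0(\bar u_0,\bar w_0)\ge c_0$; since the left-hand side of the splitting equals $c_0$ while all $N+1$ summands are $\ge c_0>0$, necessarily $N=0$ and $\J_0(\bar u_0,\bar w_0)=c_0$, so $(u_0,w_0):=(\bar u_0,\bar w_0)$ is as required. If $\bar u_0=0$, then $\bar w_0=w_0(0)=0$ by Lemma~\ref{LemDefofW}~$(b)$, so $\J_0(\bar u_0,\bar w_0)=0$; the splitting reduces to $c_0=\sum_{i=1}^{N}\J_0(\bar u_i,\bar w_i)$ with every summand $\ge c_0>0$, which forces $N=1$ and $\J_0(\bar u_1,\bar w_1)=c_0$, so $(u_0,w_0):=(\bar u_1,\bar w_1)$ works. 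The case $N=0$ with $\bar u_0=0$ cannot occur, since it would give $c_0=\J_0(0,0)=0$. In every case $(u_0,w_0)\in\cN_0$ is a critical point of $\J_0$ with $u_0\neq 0$ and $\J_0(u_0,w_0)=c_0$.

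The two cited results carry essentially all the analytic weight, so the delicate part of this argument is only the bookkeeping of the last step: one must know that every bubble $(\bar u_i,\bar w_i)$, $i\ge 1$, sits on $\cN_0$ and hence carries energy at least $c_0$, and that a vanishing first profile forces $\bar w_0=0$, so that the energy identity can accommodate exactly one nonzero profile. The genuine obstacle is upstream, in Lemma~\ref{LemSplit2} (equivalently Theorem~\ref{ThMainPS_Splitting}): a $(PS)_c$-sequence in $\cN$ must split into finitely many critical points of $\J_0$ even though $\J'$ fails to be weak-to-weak$^*$ continuous on $L^{p,q}$.
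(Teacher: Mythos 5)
Your proof is correct and follows the same route as the paper's: invoke Proposition~\ref{PropDefOfm(u)}~$(b)$ with $V=0$ to produce a $(PS)_{c_0}$-sequence on $\cN_0$, feed it into the global-compactness Lemma~\ref{LemSplit2} (with $\J=\J_0$, $\cN=\cN_0$), and conclude from the energy-splitting identity since every nontrivial critical point of $\J_0$ lies on $\cN_0$ and carries energy at least $c_0$. Your final case analysis (distinguishing $\bar u_0\neq 0$, forcing $N=0$, from $\bar u_0=0$, forcing $N=1$ and selecting $(\bar u_1,\bar w_1)$) is in fact more explicit than the paper's terse ``Thus $N=0$'', which tacitly presumes the first weak profile is nonzero; this extra care is correct and closes a small gap in the exposition.
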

\begin{proof}
In view of Proposition \ref{PropDefOfm(u)} $(b)$ there is
$u_n\in S_\U$ such that $\J_0(m_0(u_n))\to c_0>0$ and 
$\J_0'(m_0(u_n))\to 0$, where $m_0$ is given in Proposition \ref{PropDefOfm(u)} $(a)$ under assumption $V=0$. Then by Lemma \ref{LemSplit2} condition (\ref{EqSplit6}) holds. Thus $N=0$ or $N=1$. If $N=0$, then $(u_0,w_0):=(\bar{u}_0,\bar{w}_0)$ is a critical point of $\J_0$, $(u_0,w_0)\in\mathcal{N}_0$ and $u_0\neq 0$. Similarly we obtain a nontrivial critical point in case $N=1$.
\end{proof}

\begin{Prop}\label{PropCrit3}
There is a critical point $(u,w)$ of $\J$ such that $u\neq 0$. If
\begin{equation}\label{GroundCond}
\int_{\R^3}V(x)|u_0+\nabla w_0|^2\, dx < 0,
\end{equation}
where $(u_0,w_0)$ is given in Proposition \ref{PropCrit2},
then $(u,w)\in\mathcal{N}$ and
\begin{equation*}\label{DefOfCritic1}
\J_0(u_0,w_0)>\J(u,w)=c:=\inf_{\mathcal{N}}\J>0.
\end{equation*}
\end{Prop}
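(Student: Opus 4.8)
The plan is to extract a Palais--Smale sequence for $\J$ in $\cN$ at the level $c:=\inf_{\cN}\J>0$, apply the splitting of Lemma~\ref{LemSplit2} to it, and then force the ``leading'' profile to be nontrivial. By Proposition~\ref{PropDefOfm(u)}$(b)$ there is $(u_n)\subset S_{\U}$ such that $m(u_n)\in\cN$ is a $(PS)_c$-sequence for $\J$; by Lemma~\ref{LemCoercive} it is bounded, so Lemma~\ref{LemSplit2} produces $(\bar u_0,\bar w_0)\in\U\times\W$ with $\J'(\bar u_0,\bar w_0)=0$, a finite family $(\bar u_i,\bar w_i)\in\cN_0$ with $\J_0'(\bar u_i,\bar w_i)=0$ and $\bar u_i\neq0$ ($1\leq i\leq N$), and
\[
c=\J(\bar u_0,\bar w_0)+\sum_{i=1}^N\J_0(\bar u_i,\bar w_i),
\]
where each term with $i\geq1$ satisfies $\J_0(\bar u_i,\bar w_i)\geq c_0:=\inf_{\cN_0}\J_0>0$. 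Moreover, if $\bar u_0=0$ then $\bar w_0=0$ (by the construction in Lemma~\ref{LemSplit2} and $w(0)=0$ from Lemma~\ref{LemDefofW}$(b)$), so $\J(\bar u_0,\bar w_0)=0$. Hence the whole argument reduces to showing $\bar u_0\neq0$: then $(u,w):=(\bar u_0,\bar w_0)$ is a nontrivial critical point of $\J$, so $(u,w)\in\cN$ and $\J(u,w)\geq c$; combined with the displayed identity and $\J_0(\bar u_i,\bar w_i)\geq c_0>0$, this forces $N=0$ and $\J(u,w)=c=\inf_{\cN}\J$.

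For the first, unconditional assertion I would split according to the sign of $\int_{\R^3}V(x)|u_0+\nabla w_0|^2\,dx$, which is $\leq0$ since $V\leq0$. If it equals $0$, then the nonnegative function $-V|u_0+\nabla w_0|^2$ has zero integral, so $V(x)(u_0(x)+\nabla w_0(x))=0$ a.e.\ on $\R^3$ (because $\{V\neq0\}=\{V<0\}$ up to a null set); then $\J'(u_0,w_0)=\J_0'(u_0,w_0)=0$ by Proposition~\ref{PropCrit2}, and since $u_0\neq0$ this is already the required nontrivial critical point of $\J$. If the integral is strictly negative, i.e.\ \eqref{GroundCond} holds, I pass to the second assertion.

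So assume \eqref{GroundCond}; the core step is to show $c<c_0$. Since $V\leq0$ we have $\J\leq\J_0$ pointwise on $\U\times\W$; by Proposition~\ref{PropDefOfm(u)}$(a)$, $m(u_0)\in\cN$ and $\J(m(u_0))=\sup_{\A(u_0)}\J$, hence $c\leq\J(m(u_0))$. Writing $m(u_0)=(t^*u_0,w^*)\in\cN\cap\A(u_0)$ with $t^*>0$, there are two cases. If $(t^*u_0,w^*)\neq(u_0,w_0)$, then Proposition~\ref{Propuv_N} with $V=0$ (which makes $(u_0,w_0)$ the unique maximum of $\J_0|_{\A(u_0)}$, of value $c_0$) gives $\J(m(u_0))\leq\J_0(t^*u_0,w^*)<c_0$. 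If $(t^*u_0,w^*)=(u_0,w_0)$, then $\J(m(u_0))=\J_0(u_0,w_0)+\frac12\int_{\R^3}V|u_0+\nabla w_0|^2\,dx=c_0+\frac12\int_{\R^3}V|u_0+\nabla w_0|^2\,dx<c_0$ by \eqref{GroundCond}. In either case $c\leq\J(m(u_0))<c_0$. Finally, if $\bar u_0=0$, the displayed identity becomes $c=\sum_{i=1}^N\J_0(\bar u_i,\bar w_i)\geq Nc_0$, which is impossible for $N\geq1$ since $c<c_0$, and gives $c=0$ for $N=0$, contradicting $c>0$. Therefore $\bar u_0\neq0$, and the reduction above yields $(u,w):=(\bar u_0,\bar w_0)\in\cN$, $N=0$, $\J(u,w)=c$, hence $\J_0(u_0,w_0)=c_0>c=\J(u,w)=\inf_{\cN}\J>0$.

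The delicate point is the strict inequality $c<c_0$: the Nehari--Pankov manifolds $\cN$ and $\cN_0$ are genuinely different when $V\not\equiv0$, so one cannot simply compare $\J$ and $\J_0$ on one fixed manifold, and the strictness has to be squeezed out of \eqref{GroundCond} together with the uniqueness of the maximizer in Proposition~\ref{Propuv_N}. The remaining steps are routine once one notes that a nontrivial critical point of $\J$ automatically lies in $\cN$ and that every extra bubble $(\bar u_i,\bar w_i)$, $i\geq1$, contributes at least $c_0>0$ to the level $c$ in the splitting identity, which is exactly what rules out both $\bar u_0=0$ and $N\geq1$.
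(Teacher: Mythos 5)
Your proof is correct and follows essentially the same route as the paper's: obtain a $(PS)_c$-sequence in $\cN$ via Proposition~\ref{PropDefOfm(u)}$(b)$, apply Lemma~\ref{LemSplit2}, and use $c<c_0$ to kill all bubbles and force $\bar u_0\neq0$, with the $\int V|u_0+\nabla w_0|^2=0$ case handled separately. Your explicit case split in the comparison $c\leq\J(m(u_0))<c_0$ (distinguishing $m(u_0)=(u_0,w_0)$ from $m(u_0)\neq(u_0,w_0)$) is in fact somewhat more careful than the paper's single displayed chain of inequalities, which silently requires exactly this dichotomy to make the strict step valid.
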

\begin{proof}
Let (\ref{GroundCond}) hold.
Observe that by Proposition \ref{PropCrit2} and Proposition \ref{Propuv_N} we have
$$c_0=\J_0(u_0,w_0)\geq \J_0(m(u_0))>\J(m(u_0))\geq c.$$
Note that any critical point $(\bar{u},\bar{w})$ of $\J_0$ such that $\bar{u}\neq 0$ belongs to $\mathcal{N}_0$ and hence 
$$\J_0(\bar{u},\bar{w})\geq c_0>0.$$ In view of Proposition \ref{PropDefOfm(u)} $(b)$ there is a $(PS)_c$-sequence $(u_n,w_n)\in\mathcal{N}$.
Therefore by Lemma \ref{LemSplit2} condition (\ref{EqSplit6}) implies that
$N=0$ and from (\ref{EqSplit4}), (\ref{EqSplit5}) we have $u_n\to \bar{u}_0$ in $\U$ and $w_n\to\bar{w}_0$ in $\W$. Thus $(u,w):=(\bar{u}_0,\bar{w}_0)$ is a critical point of $\J$ such that $\J(u,w)=c>0$ and $u\neq 0$.
Suppose that $$\int_{\R^3}V(x)|u_0+\nabla w_0|^2\,dx =0$$
then $V(x)|u_0(x)+\nabla w_0(x)|^2=0$ a.e. on $\R^3$. Then we easily see that $\J(u_0,w_0)=\J_0(u_0,w_0)$ and $\J'(u_0,w_0)=\J'_0(u_0,w_0)$ and by Proposition \ref{PropCrit2} $(u,w):=(u_0,w_0)$ is a critical point of $\J$ such that $u\neq 0$.
\end{proof}

\begin{altproof}{Theorem \ref{ThMain}}
Proof follows directly from Proposition \ref{PropCrit2}, Proposition \ref{PropCrit3} and Proposition \ref{PropSolutE}.
\end{altproof}

\begin{altproof}{Theorem \ref{ThPohozaev}}
Let $E=u+\nabla w$ be a classical solution of \eqref{eq}, i.e.
\begin{equation}\label{EqPoh}
\curlop\curlop E = f(E)\hbox{ in }\R^3
\end{equation}
such that $\div(u)=0$ and \eqref{EqThPohozaev1}, \eqref{EqThPohozaev2} holds.
Let $\vp\in C^\infty_0(\R)$ be such that $0\leq \vp\leq 1$, $\vp(r)=1$ for $r\leq 1$ and $\vp(r)=0$ for $r\geq 2$. Similarly as in \cite{Willem}[Theorem B.3.] we define $\vp_n\in C^\infty_0(\R^3)$ by the following formula
$$\vp_n(x)=\vp\Big(\frac{|x|^2}{n^2}\Big).$$ 
Then there exists $C\geq 0$ such that
$$\vp_n(x)\leq C\hbox{ and }|x||\nabla \vp_n(x)|\leq C$$
for every $n$ and $x\in\R^3$. Recall that (see \cite{Willem})  
\begin{eqnarray*}
\Delta u_i \vp_n\langle x, \nabla u_i\rangle &=& \div\Big(\vp_n(\nabla u_i\langle x,\nabla u_i\rangle-x\frac{|\nabla u_i|^2}{2})\Big)+\frac{1}{2}\vp_n|\nabla u_i|^2\\
&&-\langle \nabla\vp_n,\nabla u_i\rangle\langle x,\nabla u_i\rangle+\langle\nabla\vp_n,x\rangle\frac{|\nabla u_i|^2}{2}
\end{eqnarray*}
for $i=1,2,3$.
Since $\supp(\vp_n)\subset \Om_n:=B(0,3n^2)$, then by the divergence theorem
\begin{eqnarray*}
%\int_{\partial\Om_n} \langle\vp_n(\nabla u_i\langle \sigma,\nabla u_i\rangle-\sigma\frac{|\nabla u_i|^2}{2}),n\rangle\, d\sigma
\int_{\Om_n} \Delta u_i \vp_n\langle x, \nabla u_i\rangle\, dx &=&
\frac{1}{2}\int_{\Om_n}\vp_n|\nabla u_i|^2\,dx\\
&&+\int_{\Om_n}-\langle \nabla\vp_n,\nabla u_i\rangle\langle x,\nabla u_i\rangle+\langle\nabla\vp_n,x\rangle\frac{|\nabla u_i|^2}{2}\, dx.
\end{eqnarray*}
Hence
\begin{eqnarray}
%\int_{\partial\Om_n} \langle\vp_n(\nabla u_i\langle \sigma,\nabla u_i\rangle-\sigma\frac{|\nabla u_i|^2}{2}),n\rangle\, d\sigma
\label{eqPoh2}
\int_{\R^3} \Delta u_i \vp_n\langle x, \nabla u_i\rangle\, dx &=&
\frac{1}{2}\int_{\R^3}\vp_n|\nabla u_i|^2\,dx\\
\nonumber
&&+\int_{\R^3}-\langle \nabla\vp_n,\nabla u_i\rangle\langle x,\nabla u_i\rangle+\langle\nabla\vp_n,x\rangle\frac{|\nabla u_i|^2}{2}\, dx.
\end{eqnarray}
Observe that
\begin{eqnarray*}
\div(x\vp_n F(E))&=&
3\vp_nF(E)+\langle  f(E),\vp_n\sum_{i=1}^3x_i\partial_{x_i} E \rangle+\langle\nabla\vp_n,x\rangle F(E)
\end{eqnarray*}
and again by the divergence theorem
\begin{eqnarray}
\label{eqPoh3}
\int_{\R^3} \langle  f(E),\vp_n\sum_{i=1}^3x_i\partial_{x_i} u \rangle\, dx
&=& -\int_{\R^3} \langle  f(E),\vp_n\sum_{i=1}^3x_i\partial_{x_i} \nabla w \rangle\, dx\\
\nonumber
&& -3\int_{\R^3}\vp_nF(E)\, dx -\int_{\R^3} \langle\nabla\vp_n,x\rangle F(E)\, dx.
\end{eqnarray}
Multiplying \eqref{EqPoh} by $\vp_n\sum_{i=1}^3x_i\partial_{x_i} u $ and integrating over $\R^3$ we get
\begin{eqnarray*}
\int_{\R^3} \langle  f(E),\vp_n\sum_{i=1}^3x_i\partial_{x_i} u \rangle\, dx
&=& \int_{\R^3} \langle\curlop\curlop E, \vp_n\sum_{i=1}^3x_i\partial_{x_i} u \rangle\, dx\\
%&=& \int_{\R^3} \langle\curlop\curlop u, \vp_n\sum_{i=1}^3x_i\partial_{x_i} u \rangle\,\rangle\, dx\\
&=& \int_{\R^3} \langle\curlop\curlop u, \vp_n\sum_{i=1}^3x_i\partial_{x_i} u \rangle\, dx\\
&=& \int_{\R^3} \langle -\Delta u, \vp_n\sum_{i=1}^3x_i\partial_{x_i} u \rangle\, dx.
\end{eqnarray*}
Therefore in view of \eqref{eqPoh2} and \eqref{eqPoh3} we obtain 
\begin{eqnarray}
\label{EqPoh4}
&&\int_{\R^3} \langle  f(E),\vp_n\sum_{i=1}^3x_i\partial_{x_i} \nabla w \rangle\, dx
+3\int_{\R^3}\vp_nF(E)\, dx +\int_{\R^3} \langle\nabla\vp_n,x\rangle F(E)\, dx\\
\nonumber
&&=\frac{1}{2}\int_{\R^3}\vp_n|\nabla u|^2\,dx
+\sum_{i=1}^3\int_{\R^3}-\langle \nabla\vp_n,\nabla u_i\rangle\langle x,\nabla u_i\rangle+\langle\nabla\vp_n,x\rangle\frac{|\nabla u_i|^2}{2}\, dx.
\end{eqnarray}
By direct computations we show that
\begin{eqnarray*}
\nabla \big(\vp_n(\langle x, \nabla w\rangle -w)\big)  
&=&\vp_n(\langle x, \partial_{x_1}(\nabla w)\rangle,\langle x, \partial_{x_2}(\nabla w)\rangle,\langle x, \partial_{x_3}(\nabla w)\rangle)\\
&&+ \nabla \vp_n (\langle x, \nabla w\rangle -w)
\end{eqnarray*}
and
\begin{eqnarray*}
\langle  f(E),\vp_n\sum_{i=1}^3x_i \partial_{x_i}\nabla  w \rangle
&=&\langle  f(E),\nabla \big(\vp_n(\langle x, \nabla w\rangle -w)\big)\rangle
\\
&& -\langle  f(E),\nabla \vp_n (\langle x, \nabla w\rangle -w)\rangle.
\end{eqnarray*}
Multiplying \eqref{EqPoh} by $\nabla \big(\vp_n(\langle x, \nabla w\rangle -w)\big)$ and integrating over $\R^3$ we get
$$\int_{\R^3} \langle  f(E),\nabla \big(\vp_n(\langle x, \nabla w\rangle -w)\big)\rangle\, dx=0,$$
and thus \eqref{EqPoh4} takes the following form
\begin{eqnarray*}
&&-\int_{\R^3} \langle  f(E),\nabla \vp_n (\langle x, \nabla w\rangle -w)\rangle\, dx
+3\int_{\R^3}\vp_nF(E)\, dx +\int_{\R^3} \langle\nabla\vp_n,x\rangle F(E)\, dx\\
\nonumber
&&=\frac{1}{2}\int_{\R^3}\vp_n|\nabla u|^2\,dx
+\sum_{i=1}^3\int_{\R^3}-\langle \nabla\vp_n,\nabla u_i\rangle\langle x,\nabla u_i\rangle+\langle\nabla\vp_n,x\rangle\frac{|\nabla u_i|^2}{2}\, dx.
\end{eqnarray*}
Since $\nabla \vp_n (x)=0$ for $|x|< n^2$, then by the Lebesgue dominated theorem we get 
\begin{eqnarray*}
%\int_{\partial\Om_n} \langle\vp_n(\nabla u_i\langle \sigma,\nabla u_i\rangle-\sigma\frac{|\nabla u_i|^2}{2}),n\rangle\, d\sigma
3\int_{\R^3} F(E)\, dx &=&
\frac{1}{2}\int_{\R^3}|\nabla u|^2\,dx
=\frac{1}{2}\int_{\R^3}|\curlop E|^2\,dx
\end{eqnarray*}
which completes the proof.
\end{altproof}

\begin{altproof}{Corollary \ref{CorollaryMain}}
Suppose that $V=0$ and $E=u+\nabla w$ is a classical solution to \eqref{EqPoh} with $u\neq 0$ and $p>6$ or $q<6$. Then by \eqref{EqThPohozaev}
\begin{eqnarray*}
\int_{\R^3}\langle f(E),E\rangle\, dx = \int_{\R^3} |\curlop E|^2\, dx =6\int_{\R^3} F(E)\, dx.
\end{eqnarray*}
From (F6) we get
$$p\int_{\R^3} F(E)\, dx\leq 6\int_{\R^3} F(E)\, dx\leq q\int_{\R^3} F(E)\, dx.$$
Therefore
$\int_{\R^3} F(E)\, dx=0$ and $E=0$ a.e. on $\R^3$. Thus $u=0$ and we obtain a contradiction. If $V(x)=V_0<0$ is constant and $E=u+\nabla w$ is a classical solution to \eqref{eq} with $u\neq 0$ and  $q\leq 6$, then by Theorem \ref{ThPohozaev} 
\begin{eqnarray*}
(-V_0)|E|_2^2+\int_{\R^3}\langle f(E),E\rangle\, dx = \int_{\R^3} |\curlop E|^2\, dx =6\Big(\int_{\R^3} F(E)\, dx+\frac12(-V_0)|E|_2^2\Big)
\end{eqnarray*}
and, similarly as above, we get a contradiction.
\end{altproof}

{\bf Acknowledgements.} I would like to thank Thomas Bartsch for drawing my attention to semilinear Maxwell equations and for stimulating discussions on the topic. I am also indebted to the referee for many valuable comments.

\noindent {\sc Address of the author:}\\[1em]
\parbox{9cm}{Jaros\l aw Mederski\\
 Nicolaus Copernicus University \\
 Faculty of Mathematics and Computer Science\\
 ul.\ Chopina 12/18, 87-100 Toru\'n, Poland\\
 jmederski@mat.umk.pl\\
 }
 
\end{document}